\DeclareSymbolFont{cyrletters}{OT2}{wncyr}{m}{n}
\DeclareMathSymbol{\Sha}{\mathalpha}{cyrletters}{"58}
\newcommand{\defi}[1]{\textsf{#1}} 
\newcommand{\Aff}{\mathbb{A}}
\newcommand{\C}{\mathbb{C}}
\newcommand{\F}{\mathbb{F}}
\newcommand{\Q}{\mathbb{Q}}
\newcommand{\R}{\mathbb{R}}
\newcommand{\Z}{\mathbb{Z}}
\newcommand{\nnn}{\eta}
\newcommand{\abs}[1]{\lvert {#1} \rvert}
\newcommand{\calA}{\mathcal{A}}
\newcommand{\calD}{\mathcal{D}}
\newcommand{\calE}{\mathcal{E}}
\newcommand{\calG}{\mathcal{G}}
\newcommand{\calH}{\mathcal{H}}
\newcommand{\calO}{\mathcal{O}}
\newcommand{\calV}{\mathcal{V}}
\renewcommand{\AA}{\mathscr{A}}
\newcommand{\DD}{\mathscr{D}}
\newcommand{\EE}{\mathscr{E}}
\newcommand{\KK}{\mathscr{K}}
\newcommand{\TT}{\mathscr{T}}
\newcommand{\symplectic}{\mathfrak{S}}
\DeclareMathOperator{\alt}{alt}
\DeclareMathOperator{\Aut}{Aut}
\DeclareMathOperator*{\Average}{Average}
\DeclareMathOperator{\Char}{char}
\DeclareMathOperator{\Cl}{Cl}
\DeclareMathOperator{\coker}{coker}
\DeclareMathOperator{\disc}{disc}
\DeclareMathOperator{\End}{End}
\DeclareMathOperator{\even}{even}
\DeclareMathOperator{\Frac}{Frac}
\DeclareMathOperator{\Gal}{Gal}
\DeclareMathOperator{\height}{ht}
\DeclareMathOperator{\im}{im}
\DeclareMathOperator{\odd}{odd}
\DeclareMathOperator{\ord}{ord}
\DeclareMathOperator{\Prob}{Prob}
\DeclareMathOperator{\rank}{rank}
\DeclareMathOperator{\rk}{rk}
\DeclareMathOperator{\Sel}{Sel}
\newcommand{\tors}{{\operatorname{tors}}}
\newcommand{\GL}{\operatorname{GL}}
\newcommand{\M}{\operatorname{M}}
\newcommand{\SO}{\operatorname{SO}}
\newcommand{\injects}{\hookrightarrow}
\newcommand{\intersect}{\cap} 
\newcommand{\isom}{\simeq}
\newcommand{\tensor}{\otimes} 
\newcommand{\Union}{\bigcup} 
\newcommand{\squarepairing}{[\;\,{,}\,\;]}
\newcommand{\anglepairing}{\langle \;\,{,}\,\; \rangle}
\newcommand{\parenthesispairing}{(\;\,{,}\,\;)}
\newcommand{\ltwonorm}{|\;\,|}
\newcommand{\sumprime}{\sideset{}{^{'}}{\sum}}
\newcommand{\psmod}[1]{~(\textup{\text{mod}}~{#1})}
\newcommand{\bL}{\Lambda}
\newcommand{\ra}{\rightarrow}
\newcommand{\ProbE}{\mu}
\newcommand{\ProbK}{\mu}
\newtheorem{theorem}[equation]{Theorem}
\newtheorem{lemma}[equation]{Lemma}
\newtheorem{corollary}[equation]{Corollary}
\newtheorem{proposition}[equation]{Proposition}
\theoremstyle{definition}
\newtheorem{definition}[equation]{Definition}
\newtheorem{question}[equation]{Question}
\newtheorem{conjecture}[equation]{Conjecture}
\theoremstyle{remark}
\newtheorem{remark}[equation]{Remark}
\numberwithin{equation}{subsection}
\begin{document}

\title{A heuristic for boundedness of ranks of elliptic~curves}
\subjclass[2010]{Primary 11G05; Secondary 11G40, 11P21, 14G25}
\keywords{Elliptic curve, rank, Shafarevich--Tate group}

\author{Jennifer Park}
\address{Department of Mathematics, University of Michigan, Ann Arbor, MI, USA}
\email{jmypark@umich.edu}
\urladdr{\url{www-personal.umich.edu/~jmypark/}}

\author{Bjorn Poonen}
\address{Department of Mathematics, Massachusetts Institute of Technology, Cambridge, MA 02139-4307, USA}
\email{poonen@math.mit.edu}
\urladdr{\url{http://math.mit.edu/~poonen/}}

\author{John Voight}
\address{Department of Mathematics, Dartmouth College, 6188 Kemeny Hall, Hanover, NH 03755, USA}
\email{jvoight@gmail.com}
\urladdr{\url{http://www.math.dartmouth.edu/~jvoight/}}

\author{Melanie Matchett Wood}
\address{Department of Mathematics,
University of Wisconsin-Madison, 480 Lincoln Drive,
Madison, WI 53705, USA} 
\email{mmwood@math.wisc.edu}
\urladdr{\url{http://www.math.wisc.edu/~mmwood/}}

\date{July 10, 2018}

\begin{abstract}
We present a heuristic that suggests 
that ranks of elliptic curves $E$ over $\Q$ are bounded.
In fact, it suggests that there are only finitely many $E$ 
of rank greater than~$21$.
Our heuristic is based on modeling the ranks and Shafarevich--Tate groups
of elliptic curves simultaneously,
and relies on a theorem counting alternating integer matrices
of specified rank.
We also discuss analogues for elliptic curves over other global fields.
\end{abstract}

\maketitle

\tableofcontents

\section{Introduction}\label{S:introduction}

\subsection{A new model}
\label{S:a new model}

The set $E(\Q)$ of rational points of an elliptic curve $E$ over $\Q$ 
has the structure of an abelian group.
Mordell~\cite{Mordell1922} proved in 1922 that $E(\Q)$ is finitely generated,
so its rank $\rk E(\Q)$ is finite.
Even before this, in 1901, 
Poincar\'e~\cite{Poincare1901}*{p.~173} essentially asked 
for the possibilities for $\rk E(\Q)$ as $E$ varies.  
Implicit in this is the question of boundedness:
Does there exist $B \in \Z_{\ge 0}$ 
such that for every elliptic curve $E$ over $\Q$ one has $\rk E(\Q) \leq B$?

In this article, we present a probabilistic model providing a heuristic
for the arithmetic of elliptic curves, and we prove theorems about 
the model that suggest that $\rk E(\Q) \le 21$ 
for all but finitely many elliptic curves $E$.

Our model is inspired in part by the Cohen--Lenstra heuristics
for class groups~\cite{Cohen-Lenstra1984}, 
as reinterpreted by Friedman and Washington~\cite{Friedman-Washington1989}.
These heuristics predict that for a fixed odd prime $p$, 
the distribution of the $p$-primary part of the class group of 
a varying imaginary quadratic field is 
equal to the limit as $n \to \infty$ of the distribution of
the cokernel of the homomorphism $\Z_p^n \stackrel{A}\to \Z_p^n$
given by a random matrix $A \in \M_n(\Z_p)$;
see Section~\ref{S:Cohen-Lenstra} for the precise conjecture.
In analogy, and in agreement with 
conjectures of Delaunay~\cites{Delaunay2001,Delaunay2007,Delaunay-Jouhet2014a},
Bhargava, Kane, Lenstra, Poonen, and Rains
\cite{Bhargava-Kane-Lenstra-Poonen-Rains2015} 
predicted that for a fixed prime $p$ and $r \in \Z_{\ge 0}$, 
the distribution of 
the $p$-primary part of the Shafarevich--Tate group $\Sha(E)$
as $E$ varies over rank~$r$ elliptic curves over $\Q$ ordered by height
equals the limit as $n \to \infty$ 
(through integers of the same parity as $r$) 
of the distribution of $\coker A$
for a random alternating matrix $A \in \M_n(\Z_p)$
subject to the condition $\rk_{\Z_p}(\ker A) = r$; 
see Section~\ref{S:heuristics for Sha} for the precise conjecture
and the evidence for it.

If imposing the condition $\rk_{\Z_p}(\ker A) = r$ 
yields a distribution conjecturally associated to the curves of rank~$r$,
then naturally we guess that if we choose $A$ at random from
the space $\M_n(\Z_p)_{\alt}$ of all alternating matrices
\emph{without} imposing such a condition,
then the distribution of $\rk_{\Z_p}(\ker A)$ tends as $n \to \infty$
to the distribution of the rank of an elliptic curve.
This cannot be quite right, however: 
since an alternating matrix always has even rank,
the parity of $n$ dictates the parity of $\rk_{\Z_p}(\ker A)$.
But if we choose $n$ uniformly at random from 
$\{\lceil \nnn \rceil,\lceil \nnn \rceil+1\}$ 
(with $\nnn \to \infty$),
then we find that $\rk_{\Z_p}(\ker A)$ 
equals $0$ or $1$ with probability $50\%$ each,
and $\rk_{\Z_p}(\ker A)\ge 2$ with probability~$0\%$;
for example, when $n$ is even, 
we have $\rk_{\Z_p}(\ker A)=0$ unless $\det A = 0$,
and $\det A = 0$ holds only when $A$ lies on a 
(measure~$0$) hypersurface in the space $\M_n(\Z_p)_{\alt}$ 
of all alternating matrices.
This $50\%$--$50\%$--$0\%$ conclusion 
matches the elliptic curve rank behavior conjectured 
for quadratic twist families 
by Goldfeld \cite{Goldfeld1979}*{Conjecture~B} 
and Katz and Sarnak \cites{Katz-Sarnak1999a,Katz-Sarnak1999b}.

So far, however, this model does not predict anything about 
the number of curves of each rank $\ge 2$
except to say that asymptotically they should amount to $0\%$ of curves.
Instead of sampling from $\M_n(\Z_p)$,
we could sample from the set 
$\M_n(\Z)_{\alt,\le X}$ of alternating \emph{integer} matrices
whose entries have absolute values bounded by $X$,
and study 
\[
	\lim_{X \to \infty} 
	\Prob \left( \rk(\ker A) = r \mid A \in \M_n(\Z)_{\alt,\le X} \right),
\]
but this again would be $0$ for each $r \ge 2$.
To obtain finer information, instead of taking the limit as $X \to \infty$,
we let $X$ \emph{depend on the height $H$ of the elliptic curve being modeled};
similarly, we let $\nnn$ grow with $H$ 
so that the random integer $n$ grows too.
Now for each $r \ge 2$,
the event $\rk(\ker A) = r$ occurs with positive probability depending on $H$, 
and we can estimate for how many elliptic curves of height up to $H$ 
the event occurs.

To specify the model completely, we must specify the functions
$\nnn(H)$ and $X(H)$; 
actually, it will turn out that specifying $X(H)^{\nnn(H)}$ is enough
for the conclusions we want to draw.
We calibrate $X(H)^{\nnn(H)}$
so that the resulting prediction for the expected size of $\Sha(E)$
for curves of height up to $H$ agrees with 
theorems and conjectures about this expected size;
this suggests requiring $X(H)^{\nnn(H)} = H^{1/12+o(1)}$.

Our model is summarized as follows.
Fix increasing functions $\nnn(H)$ and $X(H)$ such that 
$X(H)^{\nnn(H)} = H^{1/12+o(1)}$ as $H \to \infty$.
(For technical reasons, we also require $\nnn(H)$ to grow 
sufficiently slowly.)
To model an elliptic curve $E$ of height $H$:
\begin{enumerate}[\upshape 1.]
\item 
Choose $n$ uniformly at random from the pair 
$\{\lceil \nnn(H) \rceil,\lceil \nnn(H) \rceil+1\}$.
\item 
Choose $A_E \in \M_n(\Z)_{\alt}$ 
with entries bounded by $X(H)$ in absolute value, uniformly at random.
\end{enumerate}
Then $(\coker A_E)_{\tors}$ models $\Sha(E)$, and $\rk(\ker A_E)$ models $\rk E(\Q)$.

Thus, heuristically, for an elliptic curve $E$ of height $H$,
the ``probability'' that $\rk E(\Q) \ge r$ 
should be $\Prob(\rk(\ker A_E) \ge r)$.
We prove that for any fixed $r \ge 1$,
the latter probability is $H^{-(r-1)/24 + o(1)}$ as $H \to \infty$ 
(Theorem~\ref{thm:EskinKatznelsonAlternating}).
In other words, for each increase in rank beyond $1$, 
the probability of attaining that rank drops by a factor of about $H^{1/24}$.
Summing the probabilities $H^{-(r-1)/24 + o(1)}$ 
over all elliptic curves $E$ over $\Q$
yields a prediction for the expected number of curves of rank $\ge r$.
It turns out that the sum diverges for $r<21$ and converges for $r>21$.
The latter suggests that there are only finitely many $E$ over $\Q$
with $\rk E(\Q) > 21$.%
\footnote{On the other hand, Elkies~\cite{Elkies2006} proved that
	there exist infinitely many $E$ of rank at least~$19$.}
Summing instead over elliptic curves of height up to $H$
leads to the prediction that for $1 \le r \le 20$,
the number of $E$ of height up to $H$ satisfying $\rk E(\Q) \ge r$
is $H^{(21-r)/24+o(1)}$ as $H \to \infty$.

In order to separate as much as possible what is proved 
from what is conjectured,
we express the model in terms of random variables serving as proxies
for the rank and $\Sha$ of each elliptic curve,
and prove unconditional theorems about these random variables 
before conjecturing that the conclusions
of these theorems are valid also for the \emph{actual} ranks and $\Sha$.
(This methodology is analogous to that of the Cram\'{e}r model, 
which models the set of prime numbers by a random set $P$ 
that includes each $n>2$ independently with probability $1/\log n$; 
see, e.g., the exposition by Granville \cite{Granville1995}.)

For example, we prove the following unconditional result 
(Theorem~\ref{T:rank 21}).

\begin{theorem}
For each elliptic curve $E$ over $\Q$, 
independently choose a random matrix $A_E$ 
according to the model defined above,
and let $\rk'_E$ denote the random variable $\rk(\ker A_E)$.
Then the following hold with probability~$1$:
\begin{enumerate}[\upshape (a)]
\item
All but finitely many $E$ satisfy $\rk'_E \le 21$.
\item
For $1 \le r \le 20$, we have 
$\#\{ E : \height E \le H \textup{ and } \rk'_E \ge r \} = H^{(21-r)/24+o(1)}$.
\item 
We have $\#\{ E : \height E \le H \textup{ and } \rk'_E \ge 21 \} \le H^{o(1)}$.
\end{enumerate}
\end{theorem}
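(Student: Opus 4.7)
The plan is to reduce everything to Borel--Cantelli arguments driven by the single-curve estimate $\Prob(\rk'_E \ge r) = \height(E)^{-(r-1)/24+o(1)}$ supplied by Theorem~\ref{thm:EskinKatznelsonAlternating}. Write $N_r(H) = \#\{E : \height E \le H,\ \rk'_E \ge r\}$. My first step would be to compute $\E N_r(H)$ by combining the standard asymptotic $\#\{E : \height E \le H\} = H^{5/6+o(1)}$ with the probability estimate and summing in dyadic shells of height, obtaining
\[
\E N_r(H) = \sum_{\height E \le H} \Prob(\rk'_E \ge r) = H^{(21-r)/24+o(1)}
\qquad (1 \le r \le 20),
\]
with the exponent becoming $o(1)$ for $r = 21$ and the full sum $\sum_E \Prob(\rk'_E \ge r)$ being finite for $r \ge 22$. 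The arithmetic works because the density of elliptic curves by height is $x^{-1/6}$ and $\int_1^H x^{-1/6} x^{-(r-1)/24}\,dx$ has exponent $(21-r)/24$.

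Parts~(a) and~(c) follow quickly from this. For (a), the matrices $A_E$ are independent by hypothesis, so the events $\{\rk'_E \ge 22\}$ are independent; their probabilities have finite sum, so the first Borel--Cantelli lemma forces only finitely many to occur almost surely. For (c), Markov's inequality gives $\Prob(N_{21}(H) > H^\varepsilon) \le H^{-\varepsilon+o(1)}$ for each $\varepsilon > 0$; evaluated along $H = 2^k$, this sequence is summable, so Borel--Cantelli yields $N_{21}(2^k) \le 2^{k\varepsilon}$ for all sufficiently large $k$, almost surely. Monotonicity of $N_{21}(\cdot)$ then gives $N_{21}(H) \le H^\varepsilon$ for all large $H$, and intersecting over a countable sequence $\varepsilon_j \to 0$ produces (c).

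Part~(b) is the substantive piece and calls for a second-moment argument. Independence of the $A_E$ makes the indicators $\mathbf{1}_{\rk'_E \ge r}$ mutually independent Bernoulli variables, so $\operatorname{Var} N_r(H) \le \E N_r(H)$, and Chebyshev gives
\[
\Prob\bigl(\lvert N_r(H) - \E N_r(H)\rvert > (\E N_r(H))^{2/3}\bigr) \le (\E N_r(H))^{-1/3}.
\]
For $1 \le r \le 20$ and $H = 2^k$, the right-hand side is summable in $k$ because $\E N_r(2^k)$ grows like a positive power of $2^k$. Borel--Cantelli then yields $N_r(2^k) = 2^{k(21-r)/24+o(1)}$ almost surely, which monotonicity extends to all $H$; taking the intersection over the (finitely many) relevant $r$ concludes (b). The step I expect to be the delicate one is ensuring that the $o(1)$ term in the input estimate of Theorem~\ref{thm:EskinKatznelsonAlternating} is uniform enough in $E$ for the termwise summation to deliver the stated asymptotic for $\E N_r(H)$, and then folding the dyadic-to-continuous interpolation error cleanly into the final $o(1)$.
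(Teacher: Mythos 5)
Your proposal is correct and complete, but it takes a somewhat different route from the paper. The paper's proof cites Lemma~\ref{L:law of large numbers} (a packaged strong law for independent but non-identically-distributed Bernoulli variables, from Durrett) to pass directly from the termwise probability sum $\sum_{E \in \EE_{\le H}} p_{E,r} = H^{(21-r)/24+o(1)}$ to the almost-sure count, with Borel--Cantelli used only for the convergent case $r > 21$. You instead re-derive what the lemma buys via a self-contained second-moment argument: exploit independence to get $\operatorname{Var} N_r(H) \le \E N_r(H)$, apply Chebyshev along a dyadic sequence, sum the error probabilities, and interpolate by monotonicity of $N_r(\cdot)$. Since Durrett's lemma is itself proved by essentially this Chebyshev-plus-subsequence method, the two arguments are equivalent in substance; the paper's version is shorter because it treats the concentration result as a black box, while yours is more self-contained. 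Your separate Markov-inequality handling of $r=21$ in part~(c) is a modest simplification: it avoids the paper's case split on whether $\sum_E p_{E,21}$ converges or diverges, since Markov plus dyadic Borel--Cantelli gives $N_{21}(H) \le H^{o(1)}$ either way. Your worry about uniformity of the $o(1)$ in $E$ is dissolved by the paper's convention that such $o(1)$ terms are functions of $H$ alone, and $p_{E,r}$ depends on $E$ only through its height, so the termwise summation and dyadic interpolation go through as you anticipate.
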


\begin{remark}
Our heuristic explains what should be expected
if there are no significant phenomena in the arithmetic of elliptic curves
beyond those incorporated in the model.
It still could be, however, that there are special families 
of elliptic curves that behave differently for arithmetic reasons,
just as there can be special subvarieties
in the Batyrev--Manin conjectures on the number of rational points
of bounded height on varieties~\cite{Batyrev-Manin1990}.  
When we generalize to global fields in Section~\ref{S:global fields},
we \emph{will} need to exclude some families of curves.
\end{remark}

\begin{remark}
In fact, the known constructions of elliptic curves over $\Q$ of high rank
proceed by starting with a parametric family with high rank generically,
and then finding specializations of even higher rank.
As Elkies points out, 
one cannot say that our heuristic for boundedness (let alone $21$) 
is convincing until one refines the model to predict the rank distribution in
such parametric families.
One plausible heuristic is that 
for a family with generic rank~$r_0$ and varying root number, 
the probability that a curve of height about $H$ in the family
has rank $r_0+s$
is comparable (up to a factor $H^{o(1)}$) 
to the probability that an arbitrary curve of height about~$H$
has rank~$s$.
Although we cannot justify this directly, we can argue by analogy:
the distribution of $p$-Selmer rank in 
certain families with generic rank~$r_0$
is conjecturally obtained simply by shifting
the Selmer rank distribution for all elliptic curves
by $r_0$ \cite{Poonen-Rains2012-selmer}*{Remark~4.17}.
\end{remark}

\begin{remark}
Venkatesh and Ellenberg~\cite{Venkatesh-Ellenberg2010}*{Section~4.1} 
observed that from the arithmetic of an imaginary quadratic field
one can naturally construct an integer square matrix whose cokernel is
the class group; see Section~\ref{S:approximating a class group}.
In contrast, 
we do not know of any structure in the arithmetic of elliptic curves
that suggests the model above for $\rk E(\Q)$;
in particular, we do not yet see a natural alternating matrix 
in the arithmetic of an elliptic curve.
Our reason for using an alternating matrix is 
instead in the spirit of Occam's razor:
the model of alternating matrices over $\Z_p$ 
proposed in \cite{Bhargava-Kane-Lenstra-Poonen-Rains2015} 
is the simplest model we know of that models simultaneously 
the rank of an elliptic curve $E$ and $\Sha$ 
(more precisely, $\Sha(E)[p^{\infty}]$). 
\end{remark}

\begin{remark}
\label{R:Deninger}
Deninger too has conjectured that $\rk E(\Q)$
is naturally the dimension of the kernel of an alternating linear 
map~\cite{Deninger2010}*{Example~5}.
Specifically, in an attempt to explain the Riemann hypothesis for $L(E,s)$,
he conjectured the existence of an infinite-dimensional $\R$-vector space $H_E$
and an endomorphism $\theta \in \End H_E$ such that
\begin{itemize}
\item for any $\rho \in \C$,
the endomorphism $\theta - \rho \in \End(H_E \tensor_\R \C)$ satisfies
$\dim_\C \ker(\theta-\rho) = \ord_{s=\rho} L(E,s)$, and
\item the endomorphism $\theta-1$ is alternating 
with respect to an inner product on $H_E$.
\end{itemize}
If these exist and the Birch and Swinnerton-Dyer conjecture 
is true, then $\rk E(\Q) = \dim \ker(\theta-1)$.
\end{remark}

\subsection{Outline of the paper}

Section~\ref{S:NandC} introduces some notation that will be used
throughout the rest of the paper.
Section~\ref{S:history} surveys some of 
the history regarding ranks of elliptic curves.
Sections \ref{S:Cohen-Lenstra} and~\ref{S:heuristics for Sha}
discuss heuristics for class groups and Shafarevich--Tate groups,
respectively, in terms of cokernels of matrices; 
the former heuristics are not logically necessary for our arguments,
but they serve as the basis for an analogy.
In Section~\ref{S:average Sha} 
we prove theorems to help us predict the average size of $\Sha$;
the idea, due to Lang~\cite{Lang1983-conjectured}, 
is to solve for this size in the Birch and Swinnerton-Dyer conjecture.
These theorems will guide the setting of parameters in our model.
Section~\ref{S:heuristics for ranks} presents the model itself,
and proves unconditional theorems about the random variables in it,
while Section~\ref{S:consequences} conjectures that the conclusions
of these theorems are valid also for the actual ranks and $\Sha$.
One of the statements in Section~\ref{S:heuristics for ranks} 
depends on Theorem~\ref{thm:EskinKatznelsonAlternating},
whose proof is postponed to Section~\ref{S:counting matrices}
so as not to interrupt the flow leading to the main 
conclusions and conjectures in Sections \ref{S:heuristics for ranks}
and~\ref{S:consequences}.  
Section~\ref{S:computational evidence} presents some computational
evidence for our heuristic.
Section~\ref{S:further} discusses some further questions.
Finally, in Section~\ref{S:global fields} 
we discuss analogues of our heuristic for number fields $K$ larger than $\Q$ 
and for global function fields such as $\F_p(t)$.
In particular, we investigate whether our heuristic
predicts a value for $B_K \colonequals \limsup_{E/K} \rk E(K)$.
Also, using either 
Heegner points in anticyclotomic extensions of imaginary quadratic fields,
or recent work of Bhargava, Skinner, 
and Zhang~\cite{Bhargava-Skinner-Zhang-BSD-preprint}
combined with the multidimensional density Hales--Jewett theorem,
one can prove the existence of number fields $K$
for which $B_K$ grows at least linearly in $[K:\Q]$.

\subsection*{Acknowledgements} 

We thank 
Brian Conrey, 
Henri Darmon, 
Noam Elkies, 
Daniel Erman, 
Derek Garton, 
Marc Hindry, 
Barry Mazur, 
Mark Watkins,
and 
Peter Winkler 
for discussions.  
We thank the referee for suggestions to improve 
an earlier draft of this manuscript.
This work began at the workshop 
\emph{Arithmetic statistics over finite fields and function fields} 
at the American Institute of Mathematics, 
and the authors thank AIM for its support.

The first author was supported in part by 
a Natural Sciences and Engineering Research Council Postdoctoral Fellowship.   
The second author was supported in part by 
National Science Foundation (NSF) grants DMS-1069236 and DMS-1601946 
and Simons Foundation grants \#340694, \#402472, and \#550033.
The third author was supported by an NSF CAREER Award (DMS-1151047)
and a Simons Collaboration Grant (550029).
The fourth author was supported by 
an American Institute of Mathematics Five-Year Fellowship, 
NSF grants DMS-1147782, DMS-1301690, and CAREER Award DMS-1652116, 
a Packard Fellowship for Science and Engineering,
 a Sloan Research Fellowship, and a Vilas Early Career Investigator Award.

\section{Notation and conventions}\label{S:NandC}

We make many estimates of functions of several variables.
If ${x}=(x_1,\ldots,x_m)$ and ${a}=(a_1,\ldots,a_n)$,
we write $f({x},{a}) \ll_{{a}} g({x},{a})$
to mean that there exists a positive-valued function $C({a})$ 
such that $f({x},{a}) \le C({a}) g({x},{a})$ 
for all values of $({x},{a})$ we consider.  
We write $f({x},{a}) \asymp_{{a}} g({x},{a})$ to mean
$f({x},{a}) \ll_{{a}} g({x},{a})$ and
$g({x},{a}) \ll_{{a}} f({x},{a})$.
When ``$o(1)$'' appears in a sentence with a variable $H$ going to infinity, 
our interpretation is that there exists a function $f(H)$, 
tending to $0$ as $H \to \infty$,
such that replacing the $o(1)$ by $f(H)$ makes the entire sentence true.

Let $G$ be an abelian group.  
For $n \in \Z_{\ge 1}$, 
let $G[n] \colonequals \{x \in G: nx=0\}$.
For $p$ prime, define 
$G[p^\infty] \colonequals \Union_{m \ge 1} G[p^m]$,
and define the \defi{$p$-rank} of $G$ to be $\dim_{\F_p} G[p]$.

Let $R$ be a commutative ring.  
For $n \in \Z_{\ge 0}$, 
let $\M_n(R)$ be 
the set of $n \times n$ matrices with entries in $R$.
For $X \in \R_{>0}$, let 
$\M_n(\Z)_{\le X} \subseteq \M_n(\Z)$
be the subset of 
matrices whose entries have absolute value less than or equal to $X$.
A matrix $A \in \M_n(R)$ is \defi{alternating} 
if $A^T=-A$ and all the diagonal entries are $0$ 
(if $2$ is not a zero divisor in $R$, 
then the skew-symmetry condition $A^T=-A$ suffices).
Let $\M_n(R)_{\alt}$ be the set of alternating matrices, and let 
$\M_n(\Z)_{\alt,\leq X} \colonequals \M_n(\Z)_{\alt} \cap \M_n(\Z)_{\leq X}$.

For a subset $S \subseteq \M_n(\Z_p)$, 
define $\Prob(S) = \Prob(S \mid A \in \M_n(\Z_p))$
as the probability of $S$ with respect to the normalized Haar measure 
on the compact group $\M_n(\Z_p)$.

Let $R$ be an integral domain, 
and let $K \colonequals \Frac R$ be the field of fractions.
If $M$ is a finitely generated module over $R$,
define $\rk M \colonequals \dim_K (M \tensor_R K)$.
For $A \in \M_n(R)$, let $\rank A$ denote the rank of the matrix,
so $\rank A = n - \rk(\ker A)$.

Finally, because both proven statements and conjectured statements 
play an important role in this paper, in order to distinguish the two,
any unproven or conjectural (in)equality in a displayed equation comes with a question 
mark over the symbol, as in $\stackrel{?}=$.

\section{History}
\label{S:history}

\subsection{Brief history of boundedness guesses}

Many authors have proposed guesses as to whether ranks of elliptic
curves over $\Q$ are bounded, and the consensus seems to
have shifted over time.

Early researchers guessed that ranks were bounded.
In 1950, N\'eron wrote 
``L'existence de cette borne est \dots consid\'er\'ee comme probable'' 
\cite{Poincare1950-Oeuvres5}*{p.~495, end of footnote~(${}^3$)}, 
even though he himself proved the existence of elliptic curves 
of rank $\ge 11$ \cite{Neron1956}.
Honda conjectured in 1960
that for any abelian variety $A$ over $\Q$,
there is a constant $c_A$ such that $\rk A(K) \le c_A [K:\Q]$
for every number field $K$ \cite{Honda1960}*{p.~98}%
\footnote{Honda wrote $=$ instead of $\le$, but almost certainly $\le$ was intended.};
this would imply that ranks are bounded in the family of quadratic
twists of any elliptic curve over $\Q$.

But from the mid-1960s to the present, it seems that most experts
conjectured unboundedness.
Cassels in a 1966 survey article \cite{Cassels1966-diophantine}*{p.~257} 
wrote ``it has been widely conjectured that there is an upper bound
for the rank depending only on the groundfield.  
This seems to me implausible
because the theory makes it clear that an abelian variety can only
have high rank if it is defined by equations with very large coefficients.'' 
Tate~\cite{Tate1974}*{p.~194} wrote 
``I would guess that there is no bound on the rank.''
Mestre,
who developed a method for finding elliptic curves of high rank, 
wrote ``Au vu de cette m\'ethode, il semble que l'on puisse s\'erieusement
conjecturer que le rang des courbes elliptiques d\'efinies sur $\Q$ n'est
pas born\'e'' \cite{Mestre1982},
and proved a rank bound depending on the conductor $N$ of $E$,
namely $O(\log N)$ unconditionally \cite{Mestre1986}*{II.1.1},
and $O(\log N/\log \log N)$ conditionally 
on the Riemann hypothesis for $L(E,s)$ \cite{Mestre1986}*{II.1.2}.
Silverman \cite{SilvermanAEC2009}*{Conjecture~10.1}
wrote that it is a ``folklore conjecture'' that ranks are unbounded.
In 1992 Brumer~\cite{Brumer1992}*{Section~1} 
wrote ``Today, it is believed that the rank is unbounded,''
and noted that the available numerical data was
``not incompatible with the possibility
that, for each $r$, some positive proportion of all curves might have
rank at least $r$.''  

Here are two possible reasons for this opinion shift towards unboundedness:
\begin{enumerate}[\upshape 1.]
\item Tate and Shafarevich \cite{Tate-Shafarevich1967}
and Ulmer \cite{Ulmer2002} constructed families of elliptic curves
over $\F_p(t)$ in which the rank is unbounded.
\item Every few years, the proved lower bound on the maximum rank
of an elliptic curve over $\Q$ increased:
see \cite{Rubin-Silverberg2002}*{Section~3} for the history up to 2002.
The current record is held by Elkies \cite{Elkies2006},
who found an elliptic curve $E$ over $\Q$ of rank $\ge 28$,
and an infinite family of elliptic curves over $\Q$ of rank $\ge 19$.
\end{enumerate}

Some authors have even proposed a rate at which rank can grow 
relative to the conductor~$N$:
\begin{itemize}
\item Ulmer's examples over $\F_p(t)$ attained 
Brumer's (unconditional) function field analogue 
\cite{Brumer1992}*{Proposition~6.9} 
of Mestre's conditional upper bound $O(\log N/\log \log N)$.
This led Ulmer \cite{Ulmer2002}*{Conjecture~10.5} to conjecture 
that Mestre's conditional bound would be attained over $\Q$,
that is, that 
\[
	\limsup_{N \to \infty} \frac{\rk E(\Q)}{\log N / \log \log N} \stackrel{?}> 0.
\]
\item 
On the other hand, 
Farmer, Gonek, and Hughes \cite{Farmer-Gonek-Hughes2007}*{(5.20)}, 
based on conjectures for the maximal size of critical values  
and the error term in the number of zeros up to a given height
for families of $L$-functions, suggest that
\begin{equation}
\limsup_{N \to \infty} \frac{ \rk E(\Q)}{\sqrt{\log N\log\log N}} \stackrel{?}= 1,
\end{equation}
in contradiction to Ulmer's conjecture.
\end{itemize}

\subsection{Previous heuristics for boundedness}
\label{S:previous heuristics}

\begin{enumerate}[\upshape (a)]
\item 
Rubin and Silverberg~\cite{Rubin-Silverberg2000}*{Remarks 5.1 and~5.2} 
gave a heuristic based on the expected size of squarefree parts
of binary quartic forms.
They showed that if certain lattices they define were randomly distributed,
then ranks in a family of quadratic twists of a fixed elliptic curve $E$
would be bounded by $8$.
As they knew, however, the conclusion is wrong for some curves $E$, 
e.g., any $E$ of rank greater than $8$.
Presumably this explains why they did not conjecture
boundedness of rank based on this heuristic.
\item 
Granville gave a heuristic, 
discussed in~\cite{Watkins-et-al2014}*{Section~11} 
and further developed in~\cite{Watkins-discursus}, 
based on estimating the number of integer solutions of bounded height
to the equation defining a family of elliptic curves.
His observation was that a single elliptic curve of high rank
would by itself contribute more integer solutions than should be 
expected for the whole family.
Watkins \cite{Watkins-et-al2014}*{Section~11.4} 
writes that similar ideas would lead to 
the conclusion that all but finitely many
elliptic curves $E$ satisfy $\rk E(\Q) \le 21$.
See also comments by Conrey, Rubinstein, Snaith, and Watkins
\cite{Conrey-Rubinstein-Snaith-Watkins2007}*{Section~1.3}.
\end{enumerate}
These two approaches seem completely unrelated to ours.

\subsection{Conjectures for rank~2 asymptotics}
\label{S:density of rank 2}

For each elliptic curve $E$ over $\Q$, 
let $L(E,s)$ be the $L$-function of $E$, 
and let $w(E) \in \{1,-1\}$ be the sign of its functional equation, 
or equivalently, the global root number.
The Birch and Swinnerton-Dyer conjecture would imply
the parity conjecture, that $w(E) \stackrel{?}= (-1)^{\rk E(\Q)}$.

Much of the literature on the distribution of ranks of elliptic curves
focuses on quadratic twist families.
Fix an elliptic curve $E$ over $\Q$.
Let $d$ range over fundamental discriminants in $\Z$.
For each $d$, let $E_d$ denote the twist of $E_1$ by $\Q(\sqrt{d})/\Q$.
Given $r \in \Z_{\ge 0}$ and $D>0$, define
\begin{align*}
	N_{\ge r}(D) &\colonequals 
	\#\{d : |d| \le D, \; \rk E_d(\Q) \ge r \} \\
	N_{\ge r, \even}(D) &\colonequals 
	\#\{d : |d| \le D, \; \rk E_d(\Q) \ge r, \textup{ and } w(E_d)=+1 \} \\
	N_{\ge r, \odd}(D) &\colonequals 
	\#\{d : |d| \le D, \; \rk E_d(\Q) \ge r, \textup{ and } w(E_d)=-1 \}.
\end{align*}

There are many different approaches for estimating $N_{\ge 2,\even}(D)$,
listed below, but they all lead to the conjecture that
\begin{equation}\label{E:rk 2 conj}
	N_{\ge 2,\even}(D) \stackrel{?}{=} D^{3/4+o(1)}.
\end{equation}  
In other words, the prediction is that for $d$ such that $w(E_d)=+1$,
the probability that $\rk E(\Q) \ge 2$ should be about $d^{-1/4}$.
Since $\height E_d \asymp d^6$, this prediction corresponds 
to a probability of $H^{-1/24}$ for an elliptic curve of height~$H$.

\begin{enumerate}[\upshape (a)]
\item 
Let $E$ be an elliptic curve over $\Q$ with $w(E)=+1$.  
Then Waldspurger's work \cite{Waldspurger1981}*{Corollaire~2, p.~379} 
combined with the modularity of $E$,
yields a weight~$3/2$ cusp form $f = \sum a_n q^n$ 
such that for all odd fundamental discriminants 
$d<0$ coprime to the conductor of $E$,
we have $a_{|d|}=0$ if and only if $L(E_d,1)=0$ 
(see also Ono and Skinner 
\cite{Ono-Skinner1998-Inventiones}*{Section 2, Proof of (2a,b)} 
and Gross \cite{Gross1987}*{Proposition~13.5}).
When $w(E_d)=+1$, the condition $L(E_d,1)=0$
is equivalent to $\ord_{s=1} L(E_d,s) \ge 2$,
which is equivalent to $\rk E_d(\Q) \ge 2$
if the Birch and Swinnerton-Dyer conjecture holds.
The Ramanujan conjecture \cite{Sarnak1990}*{Conjecture~1.3.4} predicts that 
$a_{|d|}$ is an integer satisfying $|a_{|d|}| \le |d|^{1/4+o(1)}$, 
so one might expect that $a_{|d|}=0$ occurs 
with ``probability'' $|d|^{-1/4+o(1)}$.
If we ignore the conditions on the sign, parity, and coprimality of $d$, 
then summing over $|d| \le D$ suggests the guess 
$N_{\ge 2,\even}(D) \stackrel{?}= D^{3/4+o(1)}$.
This heuristic argument has been attributed to Sarnak 
\cite{Conrey-Keating-Rubinstein-Snaith2002}*{p.~302}.
\item
Conrey, Keating, Rubinstein, and Snaith 
\cite{Conrey-Keating-Rubinstein-Snaith2002} 
used random matrix theory to obtain a conjecture 
more precise than~\eqref{E:rk 2 conj},
namely that there exist $c_E,e_E \in \R$
such that 
\[
	N_{\ge 2,\even}(D) \stackrel{?}= (c_E + o(1)) D^{3/4} (\log D)^{e_E};
\]
later, Delaunay and Watkins \cite{Delaunay-Watkins2007}
explained how to predict $e_E$ in terms of the $2$-torsion of $E$.
The starting point is 
the Katz--Sarnak philosophy \cites{Katz-Sarnak1999a,Katz-Sarnak1999b},
based on a function field analogy,
that $L(E_d,s)$ should be modeled by the characteristic polynomial
of a random matrix from $\SO_{2N}(\R)$ for large $N$
(these random matrices seem unrelated to the 
$p$-adic and integral matrices in our heuristics).
Moment calculations of Keating and Snaith
determined the distribution of the values at $1$ of
the characteristic polynomials \cite{Keating-Snaith2000}*{Section~3.2}.
Conrey, Keating, Rubinstein, and Snaith 
obtained their conjecture by combining this
with a discretization heuristic
(interpreting sufficiently small $L$-values as $0$).

Watkins \cite{Watkins2008-ExpMath} developed a variant
for the family of \emph{all} elliptic curves over $\Q$:
he conjectured that there exists $c>0$ such that 
\[
	\#\{E: \height E \le H, \; w(E)=+1, \textup{ and } \rk E(\Q) \ge 2\} 
	\stackrel{?}= (c+o(1)) H^{19/24}(\log H)^{3/8},
\]
which is a refined version of what our heuristic predicts.
(Watkins counts by discriminant instead of height,
but one of his assumptions is that 
the two counts are comparable \cite{Watkins2008-ExpMath}*{Section~3.4}.)
\item 
Watkins \cite{Watkins2008-ExpMath}*{Section~4.5} 
also gave another argument for $H^{19/24+o(1)}$:
since the number $\Sha_0(E)$ defined in Section~\ref{S:average size of Sha}
is expected to be a square integer of size at most $H^{1/12+o(1)}$ 
(see Theorem~\ref{T:bound on Sha_0}\eqref{I:Sha_0}), 
one can guess it is $0$ about $H^{-1/24+o(1)}$ of the time, 
and there are $\asymp H^{20/24}$ elliptic curves in total.
\item 
Granville's heuristic (see Section~\ref{S:previous heuristics})
would again suggest $H^{19/24+o(1)}$,
according to Watkins \cite{Watkins-discursus}*{Section~6} 
(see also \cite{Watkins2008-ExpMath}*{Section~4.5}). 
\end{enumerate}
Our model introduced in Section~\ref{S:a new model},
based on yet another approach, 
again predicts~\eqref{E:rk 2 conj}.

\subsection{Conjectures for rank~3 asymptotics}
\label{S:density of rank 3}

While the conjectures for $N_{\ge 2,\even}(D)$ are in agreement,
the conjectures in the literature for $N_{\ge 3,\odd}(D)$ are not.
\begin{enumerate}[\upshape (a)]
\item 
Rubin and Silverberg \cite{Rubin-Silverberg2001}*{Theorem~5.4}, 
building on work of Stewart and Top~\cite{Stewart-Top1995}, 
showed that the parity conjecture implies
the lower bound $N_{\ge 3,\odd}(D) \gg D^{1/3}$
for many $E$.
\item
Conrey, Rubinstein, Snaith, and Watkins 
\cite{Conrey-Rubinstein-Snaith-Watkins2007}
used random matrix theory as in 
\cite{Conrey-Keating-Rubinstein-Snaith2002},
but the discretization depends on 
a lower bound $L'(E_d,1) \gg d^{-\theta}$
for analytic rank~$1$ twists $E_d$,
and it is not clear what the best $\theta$ is.
In fact, they proposed three approaches to suggest a value for $\theta$:
\begin{enumerate}[\upshape (1)]
\item The Birch and Swinnerton-Dyer conjecture implies a lower bound
with $\theta=1/2$, which leads to $N_{\ge 3,\odd}(D)$ being only about $D^{1/4}$,
contradicting the conditional theorem of Rubin and Silverberg above 
\cite{Conrey-Rubinstein-Snaith-Watkins2007}*{p.~3}.
\item An analogy with the class number problem suggests 
that the lower bound is valid for any $\theta>0$ 
\cite{Conrey-Rubinstein-Snaith-Watkins2007}*{p.~2};
this leads to $N_{\ge 3,\odd}(D) = D^{1-o(1)}$,
more than what is conjectured for $N_{\ge 2,\even}(D)$!
\item A model involving Heegner points 
(attributed ``largely to Birch'' 
\cite{Conrey-Rubinstein-Snaith-Watkins2007}*{Section~1.2})
again suggests that any $\theta>0$ is valid,
and hence again that $N_{\ge 3,\odd}(D) = D^{1-o(1)}$.
\end{enumerate}
\item 
Conrey, Rubinstein, Snaith, and Watkins
suggest another heuristic at the beginning of 
\cite{Conrey-Rubinstein-Snaith-Watkins2007}*{Section~1.3},
namely that the connection between rank $1$ and rank $3$ twists 
should be the same as between rank $0$ and rank $2$, 
at least to first approximation;
this suggests $N_{\ge 3,\odd}(D) = D^{3/4+o(1)}$.
\item
Granville's heuristic, discussed at the end of 
\cite{Conrey-Rubinstein-Snaith-Watkins2007}*{Section~1.3},
suggests that $N_{\ge 3}(D) \ll D^{2/3+o(1)}$.
\item 
Delaunay and Roblot give heuristics on the moments of regulators 
that suggest $N_{\ge 3,\odd}(D) = D^{1-o(1)}$ 
\cite{Delaunay-Roblot2008}*{p.~608}.  
(See also \cite{Delaunay2005} for related conjectures on the regulators.)
\end{enumerate}

There is also numerical data 
\cites{Elkies2002,Delaunay-Duquesne2003,Watkins2008-JTNB}.
According to Rubin and Silverberg \cite{Rubin-Silverberg2002}*{p.~466},
the numerical data of Elkies suggests that $N_{\ge 3,\odd}(D)$
is about $D^{3/4}$.
Watkins writes in \cite{Watkins2008-JTNB}*{Section~3.2}, however, 
that fitting more extensive data suggests an exponent for $N_{\ge 3,\odd}(D)$
noticeably smaller than the $3/4$ exponent for $N_{\ge 2,\even}(D)$.

Our model, using a single approach that also reproduces 
the well-known rank $2$ conjecture, 
predicts that $N_{\ge 3}(D)=D^{1/2+o(1)}$ and $N_{\ge 3,\odd}(D)=D^{1/2+o(1)}$.  
This prediction is different from all those above,
but it is consistent with the conditional lower bound of Rubin and Silverberg 
and with the heuristic upper bound of Granville.

\section{Cohen--Lenstra heuristics for class groups}
\label{S:Cohen-Lenstra}

In this section, we give a brief exposition of heuristics for class groups,
to motivate Section~\ref{S:heuristics for Sha} by analogy.
The conjectures are due originally to 
Cohen and Lenstra~\cite{Cohen-Lenstra1984} 
(with extensions by Cohen and Martinet~\cite{Cohen-Martinet1990}).
Following Friedman and Washington~\cite{Friedman-Washington1989} 
and Venkatesh and Ellenberg~\cite{Venkatesh-Ellenberg2010}*{Section~4.1}, 
we reinterpret these conjectures in terms of random integer matrices.

\subsection{Class groups as cokernels of integer matrices} 
\label{S:approximating a class group}

Let $K$ be a number field.
Let $I$ be the group of nonzero fractional ideals of $K$.
Let $P$ be the subgroup of $I$ consisting of principal fractional ideals.
The class group $\Cl K \colonequals I/P$ is a finite abelian group.

Let $\calO_K$ be the ring of integers of $K$.
Let $S_\infty$ be the set of \emph{all} archimedean places of $K$.
The Dirichlet unit theorem states that the unit group $\calO_K^\times$ 
is a finitely generated abelian group of rank $u \colonequals \#S_\infty-1$.

Let $S$ be a finite set of places of $K$ containing $S_\infty$.
Let $n \colonequals \#(S-S_\infty)$.
Let $\calO_{K,S}$ be the ring of $S$-integers of $K$.
By the Dirichlet $S$-unit theorem, $\calO_{K,S}^\times$
is a finitely generated abelian group of rank $\#S-1 = n+u$.

Let $I_S$ be the group of fractional ideals 
generated by the (nonarchimedean) primes in $S$.
Let $P_S$ be the subgroup of $I_S$ consisting of principal fractional ideals,
so we obtain an injective homomorphism $I_S/P_S \injects I/P = \Cl K$.
If $S$ is chosen so that its primes generate the finite group $\Cl K$,
then $I_S/P_S \isom I/P = \Cl K$.

The group $I_S$ is a free abelian group of rank $n$.
Since $P_S$ is the image of the homomorphism $\calO_{K,S}^{\times} \to I_S$,
whose kernel is the torsion subgroup of $\calO_{K,S}^\times$,
the group $P_S$ is a free abelian group of rank $n+u$.
If we choose bases,
then we represent $\Cl K$ as the cokernel of a homomorphism 
$\Z^{n+u} \to \Z^n$.
We write this cokernel as $\coker A$ 
for some $n \times (n+u)$ matrix $A$ over $\Z$.
If we view this same $A$ as a matrix over $\Z_p$,
then $\coker(A \colon \Z_p^{n+u} \to \Z_p^n) = (\Cl K)[p^\infty]$.

\begin{remark}
Friedman and Washington \cite{Friedman-Washington1989}
were the first to model (the Sylow $p$-subgroups of) 
class groups as cokernels of matrices,
but they arrived at such a model via a different argument.
Specifically, they considered the function field analogue,
in which case $(\Cl K)[p^\infty]$ for $p \ne \Char K$
can be understood in terms of the action of Frobenius
on the Tate module $T_p J$ of the Jacobian $J$ of a curve
over a finite field.
It was only later that 
Venkatesh and Ellenberg~\cite{Venkatesh-Ellenberg2010}*{Section~4.1}
noticed the connection with the presentation of the class group given above.  
\end{remark}

\subsection{Heuristics for class groups}
\label{S:heuristics for class groups}

Let $\KK$ be the family of all imaginary quadratic fields
up to isomorphism.
What is the distribution of $\Cl K$ as $K$ varies over $\KK$?
To formulate this question precisely, we order the fields
by their discriminant $D \colonequals \disc K$.
For $X>0$, 
let $\KK_{\le X} \colonequals \{K \in \KK : \lvert\disc K\rvert \le X\}$.
Define the \defi{density} of a subset $S \subset \KK$ by
\[
	\ProbK(S) = \ProbK(S \mid K \in \KK) 
	\colonequals 
	\lim_{X \to \infty} \frac{\#(S \intersect \KK_{\le X})}{\#\KK_{\le X}}
\]
when this limit exists.

Hecke, Deuring, and Heilbronn proved that
$\#\Cl K \to \infty$ as $|D| \to \infty$, 
and soon thereafter Siegel proved $\#\Cl K = |D|^{1/2+o(1)}$;
see the appendix to Serre~\cite{SerreMordellWeil1997} for the history.
Therefore, for any finite abelian group $G$,
the set $\{K \in \KK : \Cl K \isom G\}$ is finite,
so $\ProbK(\Cl K \isom G) = 0$.

To get subsets of positive density,
we instead examine the $p$-Sylow subgroup $(\Cl K)[p^\infty]$ for a fixed prime $p \neq 2$ (The case $p=2$ is different because of genus theory;
Gerth~\cite{Gerth1987} formulated analogous conjectures
by considering $(\Cl K)^2[2^\infty]$ instead,
and Fouvry and Kl\"uners \cite{FouvryKlueners2007} 
proved that $(\Cl K)^2[2]$ is distributed as Gerth conjectured.)
For each finite abelian $p$-group $G$,
the density $\ProbK((\Cl K)[p^\infty]\isom G)$ is conjecturally positive,
and there are two conjectures for its value, as follows.
\begin{enumerate}
\item[(\ref{eqn:conj1cl})]
The density is inversely proportional to $\#\Aut G$:
\begin{equation} \label{eqn:conj1cl} \notag \stepcounter{equation}
	\ProbK((\Cl K)[p^\infty]\isom G) 
	\stackrel{?}= 
	\frac{(\#\Aut G)^{-1}}{\eta(p)},
\end{equation}
where the normalization constant $\eta(p)$ 
needed for a probability distribution is given by Hall~\cite{Hall1938} as
\[ 
	\eta(p) 
	\colonequals \sum_{\substack{\textup{finite abelian} \\ \textup{$p$-groups $G$}}} (\#\Aut G)^{-1} 
	= \prod_{i=1}^{\infty} (1-p^{-i})^{-1}.
\]
\item[(\ref{eqn:conj2cl})]
Inspired by Section~\ref{S:approximating a class group}, 
with unit rank $u=\#S_\infty-1=0$, 
one models $(\Cl K)[p^\infty]$ as $(\coker A)[p^\infty]$ for a ``random'' $n \times n$ matrix $A$ over $\Z$ or $\Z_p$:
\begin{equation} \label{eqn:conj2cl} \notag \stepcounter{equation}
\begin{aligned}
	\ProbK((\Cl K)[p^\infty]\isom G) 
	&\stackrel{?}= 
	\lim_{n \to \infty} 
	\lim_{X \to \infty} 
	\frac{\# \{A \in \M_n(\Z)_{\le X} : (\coker A)[p^\infty] \isom G\}}
		{\# \M_n(\Z)_{\le X}} \\
	&= 
	\lim_{n \to \infty} 
	\Prob(\coker A \isom G \mid A \in \M_n(\Z_p)).
\end{aligned}
\end{equation}
(Recall our conventions in Section~\ref{S:NandC} for these probabilities; 
the equality of the probabilities in the last two expressions 
follows from the asymptotic equidistribution of $\Z$ in $\Z_p$.
The equality of the limits in the last two expressions is very robust; it holds when when we replace $A\in \M_n(\Z)$ by drawing $A$ from much more general distributions of integral matrices \cite{Wood-preprint}.)
\end{enumerate}

Conjecture~\eqref{eqn:conj1cl} is due to 
Cohen and Lenstra~\cite{Cohen-Lenstra1984};
they were motivated by numerical data
and the general principle that an object should be counted with weight 
inversely proportional to the size of its automorphism group.  Conjecture~\eqref{eqn:conj2cl} in the second form
\[
	\ProbK((\Cl K)[p^\infty]\isom G) \stackrel{?}= 
	\lim_{n \to \infty} 
	\Prob(\coker A \isom G \mid A \in \M_n(\Z_p))
\]
is due to Friedman and Washington~\cite{Friedman-Washington1989}.

In fact, Conjectures \ref{eqn:conj1cl} and~\ref{eqn:conj2cl}
are equivalent:

\begin{theorem}[Friedman and Washington \cite{Friedman-Washington1989}]
For every finite abelian $p$-group $G$, 
\[
	\lim_{n \to \infty} 
	\Prob(\coker A \isom G \mid A \in \M_n(\Z_p))
	= \frac{(\#\Aut G)^{-1}}{\eta(p)} = \frac{1}{\#\Aut G} \prod_{i=1}^{\infty} (1-p^{-i}).
\]
\end{theorem}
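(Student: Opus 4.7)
The plan is to use the method of moments adapted to probability distributions on isomorphism classes of finite abelian $p$-groups, where the role of ``moments'' is played by the surjection counts $H\mapsto\#\operatorname{Sur}(H,G)$ indexed by finite abelian $p$-groups $G$. I would first compute the $G$-moment of $\coker A$ for Haar-random $A\in\M_n(\Z_p)$, take the limit $n\to\infty$, and finally invoke moment inversion to identify the limit as $\mu_{CL}(H):=(\#\Aut H)^{-1}/\eta(p)$.

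For the first step, fix $G$. Every surjection $\coker A\twoheadrightarrow G$ pulls back uniquely to a surjection $\phi\colon\Z_p^n\twoheadrightarrow G$ with $\phi\circ A=0$, so $\#\operatorname{Sur}(\coker A,G)=\#\{\phi\in\operatorname{Sur}(\Z_p^n,G):\phi\circ A=0\}$. Using that, for fixed $\phi$, the equation $\phi\circ A=0$ requires each of the $n$ independent columns of $A$ to lie in the index-$|G|$ subgroup $\ker\phi\subset\Z_p^n$ (of Haar measure $|G|^{-1}$, so $\Prob(\phi\circ A=0)=|G|^{-n}$), I would obtain
\[
	\E_n[\#\operatorname{Sur}(\coker A,G)]=\sum_{\phi\in\operatorname{Sur}(\Z_p^n,G)}\Prob(\phi\circ A=0)=\frac{\#\operatorname{Sur}(\Z_p^n,G)}{|G|^n}.
\]
A short inclusion-exclusion over the finitely many maximal proper subgroups $M\subsetneq G$ (each of index $p$, contributing at most $|M|^n=|G|^n p^{-n}$ non-surjective homomorphisms) then shows $\#\operatorname{Sur}(\Z_p^n,G)/|G|^n\to 1$, so the $G$-moment of $\mu_n:=\text{law of }\coker A$ tends to $1$ for every $G$.

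On the other hand, the classical Hall identity $\sum_H \#\operatorname{Sur}(H,G)/\#\Aut H = \eta(p)$ (the sum running over iso classes of finite abelian $p$-groups $H$) rewrites as $\sum_H\mu_{CL}(H)\#\operatorname{Sur}(H,G)=1$, so $\mu_{CL}$ has the same moments. To deduce pointwise convergence $\mu_n(H_0)\to\mu_{CL}(H_0)$, I would apply moment inversion: the family $\{\#\operatorname{Sur}(\cdot,G)\}_G$ spans enough of the relevant function algebra that explicit M\"obius inversion on the poset of quotient surjections writes each indicator $\mathbf{1}_{H\isom H_0}$ as a locally finite $\Z$-linear combination of surjection counts. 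The hard part is this last step: since the state space is infinite, convergence of all moments does not automatically yield convergence of the distribution. The requisite tightness follows from a tail bound like $\Prob(|\coker A|\ge p^k)=O(p^{-k})$ (deducible from $\Prob(p^k\mid \det A)\le p^{-k}$ under Haar measure), together with the uniform bound $\E_n[\#\operatorname{Sur}(\coker A,G)]\le 1$; these jointly justify term-by-term passage to the limit, yielding the stated identity.
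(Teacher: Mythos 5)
The paper does not actually prove this theorem; it only cites Friedman and Washington, so there is no in-paper argument to compare against. Your method-of-moments strategy is the now-standard conceptual route, distinct from the original Friedman--Washington approach (which proceeds by a direct count of Smith normal forms), and your moment computation $\E_n[\#\operatorname{Sur}(\coker A, G)] = \#\operatorname{Sur}(\Z_p^n,G)/|G|^n \to 1$ together with the Hall-type identity $\sum_H \#\operatorname{Sur}(H,G)/\#\Aut H = \eta(p)$ are both correct. However, the parenthetical you offer as the source of tightness is wrong: for Haar-random $A \in \M_n(\Z_p)$ with $n \geq 2$, the inequality $\Prob(p^k \mid \det A) \le p^{-k}$ fails already at $k=1$. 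Indeed, reducing mod $p$ gives $\Prob(p \mid \det A) = 1 - \prod_{i=1}^n (1-p^{-i})$, which exceeds $p^{-1}$ for every $n \ge 2$ and tends to $1 - \prod_{i\ge 1}(1-p^{-i})$ as $n \to \infty$ (about $0.71$ for $p=2$).

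Fortunately the fix is already in your toolkit. The uniform bound $\E_n[\#\operatorname{Sur}(\coker A,G)] \le 1$ implies, because $\#\operatorname{Sur}(H,G) \ge \#\Aut G$ whenever $H \isom G$, the pointwise estimate $\Prob(\coker A \isom G) \le 1/\#\Aut G$, uniformly in $n$. Combined with the convergence $\sum_G 1/\#\Aut G = \eta(p) < \infty$, this yields a uniform-in-$n$ tail bound $\Prob(|\coker A| \ge p^k) \le \sum_{|G|\ge p^k} 1/\#\Aut G \to 0$ as $k \to \infty$, which is exactly what tightness requires --- the false determinant estimate was never needed. With that substitution, and a careful spelling out of the moment-to-distribution inversion (for moments identically equal to $1$ the moment problem lies well within the determinate regime where uniqueness and pointwise convergence of the measures are known), your proof goes through.
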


If instead we consider the family of \emph{real} quadratic fields, then 
the unit rank $u$ is $1$, so Section~\ref{S:approximating a class group}
suggests that $\Cl K$ should be modeled by 
the cokernel of an $n \times (n+1)$ matrix, 
in which case there is a similar story to that above.

\section{Heuristics for Shafarevich--Tate groups}
\label{S:heuristics for Sha}

In this section, we consider heuristics for the Shafarevich--Tate group of an elliptic curve over $\Q$, analogous to the heuristics for class groups in the previous section.

\subsection{Elliptic curves}

An elliptic curve $E$ over $\Q$
is isomorphic to the projective closure 
of a curve $y^2=x^3+Ax+B$ for a unique pair of integers $(A,B)$ 
such that there is no prime $p$ such that $p^4 \mid A$ and $p^6 \mid B$.
Conversely, any such pair $(A,B)$ with $4A^3+27B^2 \ne 0$ 
defines an elliptic curve over $\Q$.
Let $\EE$ be the set of elliptic curves of this form, one in each $\Q$-isomorphism class.

Define the \defi{(naive) height} of $E \in \EE$ 
by 
\[ 
	\height E \colonequals \max(|4A^3|,|27B^2|). 
\]
Let $\EE_{\le H} \colonequals \{E \in \EE: \height E \le H\}$.
An elementary sieve argument \cite{Brumer1992}*{Lemma~4.3} shows that 
\begin{equation} \label{eqn:height}
	\#\EE_{\le H} = (\kappa + o(1)) H^{5/6},
\end{equation}
where $\kappa \colonequals 2^{4/3} 3^{-3/2} \zeta(10)^{-1}$.

For a subset $S\subseteq \EE$, we define densities 
\begin{align*}
  \ProbE(S) &\colonequals \lim_{H\rightarrow\infty}
  		\frac{\#(S\cap \EE_{\le H})}{\# \EE_{\le H}} \\
  \ProbE(S \mid \rk E(\Q)=r ) &\colonequals \lim_{H\rightarrow\infty}
  \frac{\#\{ E\in S\cap \EE_{\le H}\mid \rk E(\Q)=r  \}}
       {\#\{ E\in  \EE_{\le H}\mid \rk E(\Q)=r  \}},
\end{align*}
when the limits exist.  

\begin{remark}
If for some $r$, there are no $E \in \EE$ such that $\rk E(\Q) = r$,
then the density $\ProbE(S \mid \rk E(\Q)=r )$ does not exist!
\end{remark}

\begin{remark}
\label{R:counting by discriminant}
Elliptic curves can be ordered in other ways, 
such as by minimal discriminant or conductor.
It is still true that the set of $E \in \EE$
of minimal discriminant or conductor up to $X$ is finite,
but there is no unconditional estimate for its size, 
even though for most $E$ (ordered by height), 
the minimal discriminant and conductor 
are of the same order of magnitude as the height.
See Watkins \cite{Watkins2008-ExpMath}*{Section~4} for further discussion.
Hortsch~\cite{Hortsch-preprint} recently succeeded in counting
elliptic curves of bounded \emph{Faltings height}, however.
\end{remark}

Associated to an elliptic curve $E \in \EE$ are other invariants:
the \defi{$n$-Selmer group} $\Sel_n E$ for each $n \ge 1$
and the \defi{Shafarevich--Tate group} $\Sha(E)$;
see Silverman~\cite{SilvermanAEC1992}*{Chapter~10}.
These invariants are related by an exact sequence
\[
	0 \to \frac{E(\Q)}{nE(\Q)} \to \Sel_n E \to \Sha(E)[n] \to 0
\]
for each $n \ge 1$.
Taking the direct limit as $n$ ranges over powers of a prime $p$ yields the exact sequence
\begin{equation}
\label{E:Selmer-Sha}
	0 \to E(\Q) \tensor \frac{\Q_p}{\Z_p} 
	\to \Sel_{p^{\infty}}E \to \Sha(E)[p^{\infty}] \to 0.
\end{equation}

Instead of trying to predict a distribution for $\rk E(\Q)$ in isolation,
we model all three invariants at once. 
 This lets us check our model against other 
theorems and conjectures in the literature.

\subsection{Symplectic finite abelian groups}
\label{S:symplectic abelian groups}

We will soon focus on $\Sha(E)$, which is an abelian group
with extra structure that we now describe.

\begin{definition}
A \defi{symplectic finite abelian group} is a pair $(G,\squarepairing)$, 
where $G$ is a finite abelian group
and $\squarepairing \colon G \times G \to \Q/\Z$ 
is a nondegenerate alternating pairing.  
\end{definition}

An isomorphism of symplectic finite abelian groups 
is an isomorphism of groups that respects the pairings.
It turns out that if two symplectic finite abelian groups
are isomorphic as abstract groups, 
there is automatically an isomorphism that respects the pairings.
Let $\symplectic$ be a set of symplectic finite abelian groups
containing exactly one from each isomorphism class.
If $J$ is a finite abelian group, 
then $J \times J^\vee$ equipped with a natural pairing 
is a symplectic finite abelian group,
and every symplectic finite abelian group is isomorphic to one of this form.
In particular, symplectic finite abelian groups have square order.

Let $\symplectic_p$ be the set of $G \in \symplectic$
such that $\#G$ is a power of $p$.

\subsection{Distribution of the Shafarevich--Tate group}
\label{S:distribution of Sha}

It is widely conjectured that $\Sha(E)$ is finite.
Cassels~\cite{Cassels1962-IV} constructed a alternating pairing
\[
	\anglepairing \colon \Sha(E) \times \Sha(E) \to \Q/\Z.
\]
He proved also that if $\Sha(E)$ is finite, 
then $\anglepairing$ is nondegenerate.
In this case, $\Sha(E)$ equipped with $\anglepairing$
is a symplectic finite abelian group, and in particular $\#\Sha(E)$ is a square.
This already shows that the distribution of $\Sha(E)$
will be different from the conjectural distribution of class groups
in Section~\ref{S:heuristics for class groups}.

The distribution of class groups conjecturally 
depended on the unit rank of the number field;
analogously, the distribution of $\Sha(E)$ should depend on the rank of $E$.

\begin{question}
\label{Q:distribution of Sha}
Fix a prime $p$.
Given $r \geq 0$ and $G \in \symplectic_p$, 
what is the density
\[ 
	\ProbE(\Sha(E)[p^\infty] \isom G \mid \rk E(\Q)=r ) ?
\]
\end{question}

There are three conjectural answers to this question:
\begin{itemize}
\item[($\DD_r$)] 
Delaunay~\cites{Delaunay2001,Delaunay2007,Delaunay-Jouhet2014a},
in analogy with the Cohen--Lenstra heuristics for class groups,
made conjectures on the distribution of $\Sha(E)$ 
as $E$ varies over elliptic curves of rank~$r$.
He ordered elliptic curves by conductor;
but if we modify his conjectures to order by height,
they imply that the answer to Question~\ref{Q:distribution of Sha} 
is given by the probability measure $\DD_r=\DD_{r,p}$ on $\symplectic_p$
defined by
\begin{equation}
\label{E:DD_r}
        \Prob_{\DD_r}(G) \colonequals
	\frac{\#G^{1-r}}{\# \Aut G}  \prod_{i \ge r+1}(1-p^{1-2i}),
\end{equation}
where $\Aut G$ denotes the group of automorphisms of $G$ 
\emph{that respect the pairing}.
\item[($\TT_r$)] Work of Poonen and Rains
\cite{Poonen-Rains2012-selmer}
and Bhargava, Kane, Lenstra, Poonen, and Rains 
\cite{Bhargava-Kane-Lenstra-Poonen-Rains2015}
predicted the distribution of 
the isomorphism type of
$\Sel_p E$ and the short exact sequence~\eqref{E:Selmer-Sha},
respectively,
and these were shown to be compatible with some known properties
of the arithmetic of~$E$.
{}From these, one extracts a probability measure
$\TT_r$ on $\symplectic_p$ conjectured to model $\Sha(E)[p^\infty]$.  
\item[($\AA_r$)]
The article~\cite{Bhargava-Kane-Lenstra-Poonen-Rains2015},
in analogy with the Friedman--Washington interpretation
of class group heuristics,
proposed also another probability measure, $\AA_r$, 
inspired by the observation that if $A \in \M_n(\Z_p)_{\alt}$,
then $\coker(A:\Z_p^n \to \Z_p^n)_{\tors}$ 
is naturally a symplectic finite abelian $p$-group.
Specifically, for $n \equiv r \pmod{2}$,
there is a canonical probability measure 
on the set 
\[ 
	\{A \in \M_n(\Z_p)_{\alt} : \rk_{\Z_p}(\ker A) = r \}, 
\]
and $G \in \symplectic_p$, we let $\AA_{n,r}(G)$ be the measure of 
\[ 
	\{A \in \M_n(\Z_p)_{\alt} : \rk_{\Z_p}(\ker A) = r 
	\text{ and } (\coker A)_{\tors} \isom G \}.
\]
Then the formula 
\[
	\AA_r(G) \colonequals 
	\lim_{\substack{n \to \infty \\ n \equiv r \psmod{2}}}
	\AA_{n,r}(G)
\]
defines a probability measure~$\AA_r$ on $\symplectic_p$.
\end{itemize}

\begin{theorem}[\cite{Bhargava-Kane-Lenstra-Poonen-Rains2015}*{Theorems 1.6(c) and~1.10(b)}] \label{T:three same}
The probability measures $\DD_r$, $\TT_r$, $\AA_r$ coincide.
\end{theorem}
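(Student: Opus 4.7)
The plan is to attack the equality $\AA_r = \DD_r$ directly by a Haar measure computation on alternating $p$-adic matrices, and to derive $\TT_r = \AA_r$ from the observation that both are distributions of cokernel-torsion of random alternating $\Z_p$-matrices viewed through different lenses.

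First, for $\AA_r = \DD_r$: I would fix $n \equiv r \pmod{2}$ and exploit the action of $\GL_n(\Z_p)$ on $\M_n(\Z_p)_\alt$ by $U \cdot A \colonequals U A U^T$. This action preserves both the $\Z_p$-kernel rank and the isomorphism class of the symplectic finite abelian group $(\coker A)_{\tors}$, whose pairing is induced from the alternating structure of $A$. For each $G \in \symplectic_p$, I would choose a canonical block representative $A_G$ having $r$ zero rows/columns followed by a ``symplectic Smith normal form'' block encoding the elementary divisors of $G$ paired symplectically. An orbit-stabilizer computation, combined with standard volumes of congruence subgroups of $\GL_n(\Z_p)$, yields a closed-form expression for $\AA_{n,r}(G)$ in which $\#\Aut G$ (with $\Aut$ denoting \emph{symplectic} automorphisms) appears in the denominator and various $n$-dependent $p$-adic volume factors appear in the numerator. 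Sending $n \to \infty$ through $n \equiv r \pmod{2}$, the $n$-dependent pieces consolidate into the prefactor $\#G^{1-r}$ and the infinite product $\prod_{i \ge r+1}(1 - p^{1-2i})$, matching exactly formula~\eqref{E:DD_r} for $\DD_r(G)$.

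Second, for $\TT_r = \AA_r$: the BKLPR construction of the distribution of $\Sel_{p^\infty} E$ is itself assembled from random alternating $\Z_p$-matrices (via intersections of maximal isotropic subspaces in symplectic $p$-adic modules). For a random $A \in \M_n(\Z_p)_\alt$ with $\rk_{\Z_p}(\ker A) = r$, the cokernel $\coker A \isom \Z_p^r \oplus G_{\tors}$ packages the divisible summand $E(\Q) \tensor \Q_p/\Z_p \isom (\Q_p/\Z_p)^r$ and the finite symplectic summand $\Sha(E)[p^\infty]$ in a single object; applying $-\tensor \Q_p/\Z_p$ to $\coker A$ and passing to $p^\infty$-torsion reproduces the exact sequence~\eqref{E:Selmer-Sha}. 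Thus the extraction of $\TT_r$ from the BKLPR Selmer model is equivalent to taking the marginal distribution of $(\coker A)_{\tors}$ after conditioning on the kernel rank, which is by definition $\AA_r$.

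The main obstacle is the stabilizer calculation underlying the first step. To pin down a clean formula for $\AA_{n,r}(G)$, one must show that the stabilizer of $A_G$ in $\GL_n(\Z_p)$ decomposes as a semidirect product of three natural pieces: a $\GL_r(\Z_p)$ acting on the null block, the symplectic automorphism group $\Aut G$ acting on the torsion block, and a unipotent block of off-diagonal entries whose Haar measure must be tracked explicitly. The appearance of \emph{symplectic} automorphisms rather than ordinary ones, as in the Friedman--Washington computation for class groups, and the corresponding fact that $\#G$ must be a square, is precisely what distinguishes the $\Sha$ heuristic from the Cohen--Lenstra setting; correctly isolating this factor and verifying that the remaining $n$-dependent volumes converge to the $\DD_r$ normalization is the central technical difficulty.
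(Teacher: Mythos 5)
The paper does not prove this statement itself; it cites it directly to \cite{Bhargava-Kane-Lenstra-Poonen-Rains2015}*{Theorems 1.6(c) and~1.10(b)}, so there is no in-paper argument to compare against. Judged on its own merits, your sketch for $\AA_r = \DD_r$ is in the right spirit: an orbit--stabilizer computation for the $U \mapsto UAU^T$ action on $\M_n(\Z_p)_{\alt}$, with the symplectic automorphism group of $G$ appearing in the stabilizer of a block normal form, followed by a limit in $n$, is the kind of calculation one expects and is plausibly close to how BKLPR establish Theorem~1.6(c). Tracking the unipotent part of the stabilizer and the $n$-dependent congruence-subgroup volumes is genuinely the hard part, and you correctly identify it as such, but you do not carry it out.

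The second half has a real gap. You assert that ``the BKLPR construction of the distribution of $\Sel_{p^\infty} E$ is itself assembled from random alternating $\Z_p$-matrices,'' and then conclude that $\TT_r = \AA_r$ ``by definition.'' This misdescribes the model: the Poonen--Rains and BKLPR Selmer distribution $\TT_r$ is built from the intersection of two random maximal isotropic direct summands of a \emph{quadratic} $\Z_p$-module (and the short exact sequence~\eqref{E:Selmer-Sha} extracted from that data), not from cokernels of alternating matrices. The coincidence of that distribution with the alternating-matrix distribution $\AA_r$ is precisely the content of \cite{Bhargava-Kane-Lenstra-Poonen-Rains2015}*{Theorem~1.10(b)}; it is a theorem requiring a comparison of two genuinely different constructions, not a tautology. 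As written, your argument for $\TT_r = \AA_r$ assumes what needs to be proved.
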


\begin{remark}
\label{R:large and small Sha}
Conjecturally, $\Sha(E)$ is large on average when $r=0$
and small when $r \ge 1$,
just as class groups of quadratic fields are large 
if the field is imaginary ($u=0$)
and conjecturally small if the field is real ($u=1$).
More precisely, it follows from Delaunay's conjectures on $\Sha(E)$ 
mentioned above that
$\ProbE(\#\Sha(E) \le B \mid \rk E(\Q) = 0) = 0$
for all $B>0$,
but  for each $r \ge 1$ that
$\ProbE(\#\Sha(E) \le B \mid \rk E(\Q) = r) \to 1$ as $B \to \infty$.
In fact, for fixed $r \ge 1$, 
Delaunay's conjectures predict for each $G \in \symplectic$
that $\ProbE(\Sha(E) \isom G \mid \rk E(\Q)=r)$ 
is an explicit positive number, 
and these numbers define a measure on $\symplectic$
that agrees with the product over all primes of the measures $\DD_r$. 
See also \cite{Bhargava-Kane-Lenstra-Poonen-Rains2015}*{Section~5.6} 
for further discussion.
\end{remark}

\section{Average size of the Shafarevich--Tate group}
\label{S:average Sha}

Section~\ref{S:heuristics for ranks} will propose a model
for ranks and $\Sha$.
To set the parameters in that model, we will need to know
the typical size of $\#\Sha(E)$ for a rank~$0$ elliptic curve 
of height about~$H$.
Our approach to estimating $\#\Sha(E)$ 
is similar to that in Lang~\cite{Lang1983-conjectured};
see also work of 
Goldfeld and Szpiro \cite{Goldfeld-Szpiro1995}, 
de Weger \cite{deWeger1998}, 
Hindry \cite{Hindry2007}, 
Watkins \cite{Watkins2008-ExpMath},
and Hindry and Pacheco \cite{Hindry-Pacheco2016}.
Although more precise results are known, we provide a streamlined 
version of these estimates that is sufficient for our purposes.

\subsection{Size of the real period}
\label{S:real period}

\begin{lemma}\label{L:real period}
Let $A, B \in \R$ satisfy $4A^3+27B^2\ne 0$,
so that the equation $y^2=x^3+Ax+B$ defines an elliptic curve $E$ over $\R$.
Let $\Delta \colonequals -16(4A^3+27B^2)$, 
let $H \colonequals \max(|4A^3|,|27B^2|)$,
and let 
$\Omega \colonequals \int_{E(\R)} \bigl\lvert\frac{dx}{2y} \bigr\rvert$.
Then
\[
	H^{-1/12} \ll \Omega \ll H^{-1/12} \log(H/|\Delta|).
\]
\end{lemma}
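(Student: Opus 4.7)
The plan is to rescale the Weierstrass equation so that its coefficients are $O(1)$, thereby reducing the problem to a uniform estimate on a compact family. Specifically, I would substitute $x = H^{1/6}u$ and $y = H^{1/4}v$ to transform $y^2 = x^3 + Ax + B$ into $v^2 = u^3 + A'u + B'$ with $A' \colonequals H^{-1/3}A$ and $B' \colonequals H^{-1/2}B$, and the differential $dx/(2y)$ into $H^{-1/12}\,du/(2v)$. By the definition of $H$ one has $|4A'^3| \le 1$, $|27B'^2| \le 1$, and $\max(|4A'^3|, |27B'^2|) = 1$, so $(A',B')$ lies in a compact subset $K \subset \R^2$ that is bounded away from $(0,0)$. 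The discriminant scales as $\Delta' \colonequals -16(4A'^3 + 27B'^2) = \Delta/H$, so $\log(1/|\Delta'|) = \log(H/|\Delta|)$. Writing $\Omega'$ for the period of the rescaled curve, one has $\Omega = H^{-1/12}\Omega'$, and it suffices to prove $1 \ll \Omega' \ll 1 + \log(1/|\Delta'|)$ uniformly for $(A',B') \in K$ off the discriminant locus.

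For the lower bound I would use just the unbounded real component, which lies over $[u_0, \infty)$ for $u_0$ the largest real root of $f(u) \colonequals u^3 + A'u + B'$. Compactness of $K$ yields an absolute bound $|u_0| \le M_0$, and $f(u) \le 2u^3$ for $u \ge M_0 + 1$, so
\[
	\Omega' \;\ge\; \int_{M_0 + 1}^{\infty} \frac{du}{\sqrt{f(u)}} \;\ge\; \int_{M_0 + 1}^{\infty} \frac{du}{\sqrt{2u^3}} \;\gg\; 1,
\]
uniformly in $(A',B')$.

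For the upper bound I would decompose $\Omega'$ into elliptic integrals over the intervals cut out by the real roots of $f$; off the locus $\{\Delta' = 0\}$ each such integral varies continuously with $(A',B') \in K$, so the entire problem is to control the behavior as $(A',B')$ approaches that locus. On $K$ a triple-root (cuspidal) degeneration is impossible, since $u^3 + A'u + B'$ has a triple root only when $A'=B'=0$, which is excluded by $\max(|4A'^3|,|27B'^2|) = 1$; hence every boundary degeneration is nodal, meaning that exactly one pair of real roots $e_i, e_j$ coalesces with separation $\epsilon \to 0$. Since $\sqrt{|\Delta'|} = \prod_{i<j}|e_i - e_j|$ and the non-coalescing pairwise differences stay uniformly bounded above and below on $K$, one has $\epsilon \asymp \sqrt{|\Delta'|}$. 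A direct substitution near the coalescing pair then reduces the singular contribution to an integral of the shape $\int_{0}^{C} dt/\sqrt{t(\epsilon + t)} \asymp \log(1/\epsilon)$, yielding $\Omega' \ll 1 + \log(1/\epsilon) \ll 1 + \log(1/|\Delta'|)$. The main obstacle will be making this last step uniform across the compact discriminant locus, with implied constants independent of which root configuration is degenerating; a cleaner alternative would be to put the curve in Legendre form and invoke the classical expansion $K(k) = \log(4/k') + O(k'^2 \log k')$ of the complete elliptic integral of the first kind as $k \to 1^-$.
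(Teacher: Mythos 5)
Your rescaling step and the reduction to a compact set are the same as the paper's (the paper phrases it as scaling $(A,B)\mapsto(\lambda^4 A,\lambda^6 B)$ with $\lambda$ chosen so that $H=1$; your substitution $x=H^{1/6}u$, $y=H^{1/4}v$ is the identical normalization). Your explicit lower bound via the unbounded component is also fine and slightly more self-contained than the paper's one-line appeal to compactness. Where the two proofs genuinely diverge is the upper bound near the degeneration locus. The paper switches to the modular parametrization: near the two corners of the rectangle $H=1$ where $\Delta=0$, it writes the curve (up to bounded rescaling) as $\pm y^2 = 4x^3 - g_4(\tau)x - g_6(\tau)$ with $\tau = it$ or $1/2+it$, and reads off $|\Delta|\asymp e^{-2\pi\im\tau}$ and $\Omega\asymp 1$ or $\im\tau$ from the $q$-expansions of the Eisenstein series. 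You instead analyze the elliptic integral directly, which is more elementary and avoids any input from modular forms. Both routes give the same $\log(1/|\Delta'|)$ order; the paper's is shorter once you grant familiarity with $q$-expansions, while yours is more elementary and self-contained.

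One imprecision in your write-up worth flagging: you assert that every boundary degeneration on $K$ has a pair of \emph{real} roots coalescing. This is not always the case. One can approach each corner $\Delta'=0$ from the side $\Delta'>0$ (two real roots collide) or from the side $\Delta'<0$ (a complex conjugate pair approaches the real axis), and both sides are present in $K$. In the complex-conjugate case the singular local integral is $\int dt/\sqrt{t^2+b^2}\asymp\log(1/b)$ with $b\asymp\sqrt{|\Delta'|}$, which gives the same bound, so the conclusion survives, but the argument should be stated to cover both configurations. Similarly, note that in the real-root case the divergence only occurs when the double root lies to the right of the simple root (so that the node sits on the unbounded real branch); when it lies to the left, the period stays bounded, which of course is still consistent with the upper bound. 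Finally, a small normalization slip: $\Delta'=-16(4A'^3+27B'^2)$ differs from the discriminant $\prod_{i<j}(e_i-e_j)^2$ of the monic cubic by a constant factor, so $\sqrt{|\Delta'|}=\prod_{i<j}|e_i-e_j|$ should read $\asymp$ rather than $=$; this has no effect on the asymptotics. Your Legendre-form alternative with $K(k)\sim\log(4/k')$ is a clean way to make the uniformity rigorous and handles both root configurations at once.
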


\begin{proof}
Changing $(A,B)$ to $(\lambda^4 A,\lambda^6 B)$ 
with $\lambda \in \R^\times$ 
changes $(H,\Delta,\Omega)$ to 
$(\lambda^{12} H,\lambda^{12} \Delta,\lambda^{-1} \Omega)$,
so we may assume that $(A,B)$ lies on the rectangle boundary where $H=1$.
By compactness, the bounds hold on this rectangle boundary
except possibly as $(A,B)$
approaches one of the two corners where $\Delta=0$.
Up to scaling by a $\lambda$ bounded away from $0$ and $\infty$,
these are the curves $\pm y^2=4x^3-g_4(\tau)x -g_6(\tau)$
for $\tau = it$ or $\tau=1/2+it$ as $t \to \infty$
(in the part of the fundamental domain outside a compact set,
these are the $\tau$ 
such that $\Z\tau+\Z$ is homothetic to its complex conjugate).
In these families, each of the Eisenstein series $g_4$ and $g_6$ 
tends to a finite nonzero limit,
so $H$ remains bounded, 
while $|\Delta| \asymp |q| = |e^{2\pi i \tau}| = e^{-2\pi \im \tau}$,
and $\Omega$ is $1$ or $\im \tau$ up to a bounded factor,
so $1 \ll \Omega \ll \log(1/|\Delta|)$.
\end{proof}

\begin{corollary}[\cite{Watkins2008-ExpMath}*{Section~6.2}]
Under the hypotheses of Lemma~\textup{\ref{L:real period}}, we have
$\Omega \ll |\Delta|^{-1/12}$.
\end{corollary}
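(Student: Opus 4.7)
The plan is to combine the upper bound $\Omega \ll H^{-1/12} \log(H/|\Delta|)$ from Lemma~\ref{L:real period} with a comparison between $H$ and $|\Delta|$, then use the fact that a logarithm is dominated by any positive power.

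First I would observe that $H$ dominates $|\Delta|$: directly from the definitions,
\[
  |\Delta| = 16\bigl|4A^3 + 27B^2\bigr| \le 16\bigl(|4A^3| + |27B^2|\bigr) \le 32\,\max(|4A^3|,|27B^2|) = 32H,
\]
so $H/|\Delta| \ge 1/32$. In particular $|\Delta|^{-1/12} \gg H^{-1/12}$, so when $H/|\Delta|$ is bounded (equivalently, when $|\Delta|$ is comparable to $H$), the factor $\log(H/|\Delta|)$ in the bound from Lemma~\ref{L:real period} is $O(1)$ and the desired conclusion $\Omega \ll |\Delta|^{-1/12}$ is immediate.

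For the remaining range, where $H/|\Delta|$ is large, I would invoke the elementary inequality $\log x \ll x^{1/12}$ valid for $x \ge 1$ (indeed for any positive exponent in place of $1/12$). Setting $x = H/|\Delta|$ yields
\[
  \log(H/|\Delta|) \ll (H/|\Delta|)^{1/12},
\]
so substituting into the bound from Lemma~\ref{L:real period} gives
\[
  \Omega \;\ll\; H^{-1/12} \cdot (H/|\Delta|)^{1/12} \;=\; |\Delta|^{-1/12},
\]
as required.

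There is no real obstacle here: the only point to be careful about is the harmless issue of handling the case where $H/|\Delta|$ is small (so $\log(H/|\Delta|)$ is not a useful positive quantity), which is absorbed into the implicit constants once one notes $H/|\Delta| \ge 1/32$. The whole content of the corollary is that the logarithmic factor present in Lemma~\ref{L:real period} can be replaced by a factor $(H/|\Delta|)^{1/12}$, converting an $H^{-1/12}$ bound into a $|\Delta|^{-1/12}$ bound.
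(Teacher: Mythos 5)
Your argument is correct and is essentially the same as the paper's. The paper writes the bound as $H^{-1/12}\log(H/|\Delta|) = |\Delta|^{-1/12}\cdot x^{1/12}\log(1/x)$ with $x = |\Delta|/H$ and observes that $x^{1/12}\log(1/x)$ is bounded as $x\to 0^+$; your inequality $\log t \ll t^{1/12}$ for $t\ge 1$ (with $t = H/|\Delta| = 1/x$) is exactly this same elementary fact written multiplicatively, and your case split on whether $H/|\Delta|$ is bounded or large is implicit in the paper's ``remains bounded'' phrasing.
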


\begin{proof}
We have $|\Delta| \ll H$.
Then $H^{-1/12} \log(H/|\Delta|) \ll |\Delta|^{-1/12}$
since $x^{1/12} \log(1/x)$ remains bounded as $x \to 0^+$.
\end{proof}

\begin{corollary}[cf.~\cite{Hindry2007}*{Lemma~3.7}]
\label{C:real period}
If $E \in \EE$, then $H^{-1/12} \ll \Omega \ll H^{-1/12} \log H$.
\end{corollary}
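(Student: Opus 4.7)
The plan is to deduce Corollary \ref{C:real period} as a direct consequence of Lemma \ref{L:real period}, using only that the defining coefficients of curves in $\EE$ are integers.

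First, the lower bound $H^{-1/12} \ll \Omega$ appears verbatim as the lower bound of Lemma \ref{L:real period}, so no additional argument is required.

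For the upper bound, I would start from $\Omega \ll H^{-1/12} \log(H/|\Delta|)$ and control the logarithmic factor using the integrality of the Weierstrass coefficients. For $E \in \EE$, we have $A, B \in \Z$ with $4A^3+27B^2 \neq 0$, so the discriminant $\Delta = -16(4A^3+27B^2)$ is a nonzero integer, which forces $|\Delta| \geq 1$. Since $(A,B) \in \Z^2$ is not the origin, we also have $H = \max(|4A^3|,|27B^2|) \geq 4$, hence $\log H > 0$. Combining these,
\[
\log(H/|\Delta|) = \log H - \log |\Delta| \leq \log H,
\]
which yields the desired $\Omega \ll H^{-1/12} \log H$.

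The main (very mild) point to notice is that integrality alone forces $|\Delta| \geq 1$; this rules out the near-cuspidal regime where $|\Delta|$ would be vanishingly small compared to $H$, which is precisely what produced the logarithmic factor in Lemma \ref{L:real period} in the first place. Beyond this observation the corollary is automatic, and no analytic input is needed beyond what Lemma \ref{L:real period} already supplies.
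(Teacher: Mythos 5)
Your proposal is correct and matches the paper's own proof exactly: the paper likewise observes that for $E \in \EE$ the discriminant $\Delta$ is a nonzero integer, hence $|\Delta| \ge 1$, and substitutes this into Lemma~\ref{L:real period}. Your additional remarks (that $H \geq 4$ so $\log H > 0$, and that integrality rules out the near-cuspidal regime responsible for the logarithm) are sound elaborations of the same one-line argument.
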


\begin{proof}
If $E \in \EE$, then $\Delta$ is a nonzero integer, so $|\Delta| \ge 1$.
Substitute this into Lemma~\ref{L:real period}.
\end{proof}

\begin{remark}
Corollary~\ref{C:real period} is similar to the theorem
relating the naive height to the Faltings height 
\cite{Silverman1986-Faltings-height}*{second statement of Corollary~2.3},
except that the Faltings height is defined using the covolume 
of the period lattice instead of just the real period.
\end{remark}

\begin{remark}
The bounds in Corollary~\ref{C:real period} are best possible, 
up to constants.
For example, for large $a \in \Z_{>0}$,
the curve $y^2=(x-a)(x-a-1)(x+2a+1)$ has $H \asymp a^6$
and $\Omega \asymp a^{-1/2} \log a \asymp H^{-1/12} \log H$;
this shows that the upper bound is sharp.  
\end{remark}

\begin{remark}
\label{R:Neron differential}
If instead of a short Weierstrass model we use the minimal Weierstrass model
$y^2 + a_1 x y + a_3 y = x^3 + a_2 x^2 + a_4 x + a_6$
and a \defi{N\'eron differential} 
$\omega \colonequals \frac{dx}{2 y + a_1 x + a_3}$,
then  $\omega$ differs from $\frac{dx}{2y}$ by bounded powers 
of $2$ and $3$.
So if we define $\Omega$ using the N\'eron differential in place of $\frac{dx}{2y}$,
the estimates in Corollary~\ref{C:real period} are still valid.
It is this $\Omega$ that appears in the
Birch and Swinnerton-Dyer conjecture.
\end{remark}

\begin{remark}
Some authors define the real period as the integral of 
a N\'eron differential over only \emph{one component} of $E(\R)$.
\end{remark}

\subsection{The product of the Tamagawa factors}

Consider $E \in \EE$ of height about~$H$.
Let $\calE$ be the N\'eron model of $E$ over $\Z$.
For each prime $p$,
let $\Phi_p$ be the component group (scheme) 
of the special fiber $\calE_{\F_p}$,
and define the \defi{Tamagawa factor} $c_p \colonequals \#\Phi_p(\F_p)$.

\begin{lemma}
\label{L:product of Tamagawa factors}
We have $\prod_p c_p = H^{o(1)}$.
\end{lemma}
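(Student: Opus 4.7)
The plan is to bound each local Tamagawa factor $c_p$ by an elementary function of $v_p(\Delta)$, then invoke standard divisor-function estimates.

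First, I would invoke Tate's algorithm to obtain the elementary bound $c_p \leq v_p(\Delta) + C$ at every prime, for an absolute constant $C$. Indeed, for primes of good reduction $c_p = 1$ and $v_p(\Delta) = 0$. At primes of bad reduction, the Kodaira type is one of $I_n$ ($n \ge 1$), $I_n^*$, $II$, $III$, $IV$, $IV^*$, $III^*$, $II^*$; in the multiplicative case $I_n$ one has $c_p \le n = v_p(\Delta)$, and in every additive case $c_p \le 4$, so the bound holds with, say, $C = 4$. Consequently only primes dividing $\Delta$ contribute to the product, and
\[
\prod_p c_p \;=\; \prod_{p \mid \Delta} c_p \;\le\; \prod_{p \mid \Delta} \bigl(v_p(\Delta) + C\bigr).
\]

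Second, I would estimate the right-hand side by comparison with standard multiplicative functions. Writing $v_p(\Delta) + C \le C\,(v_p(\Delta) + 1)$ at each prime, we get
\[
\prod_{p\mid\Delta}\bigl(v_p(\Delta)+C\bigr) \;\le\; C^{\omega(\Delta)}\,d(\Delta),
\]
where $\omega$ counts distinct prime factors and $d$ is the divisor function. The classical estimates $d(n) = n^{o(1)}$ and $\omega(n) = O(\log n / \log\log n)$ (so that $C^{\omega(n)} = n^{o(1)}$) yield $\prod_p c_p \le |\Delta|^{o(1)}$. Finally, since $|\Delta| = 16|4A^3 + 27B^2| \le 32H$, we have $|\Delta|^{o(1)} = H^{o(1)}$, which completes the proof.

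I do not expect any substantive obstacle: the crux is the input $c_p \le v_p(\Delta) + O(1)$ from Tate's algorithm, which is an immediate case check on the Kodaira classification; everything else is bookkeeping with elementary analytic number theory.
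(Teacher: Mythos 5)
Your proof is correct and follows essentially the same route as the paper: both rest on the Kodaira--N\'eron dichotomy that $c_p \le 4$ or $c_p = v_p(\Delta)$, followed by an elementary divisor-function estimate. The paper packages the bound more compactly as $c_p \le (v_p(\Delta)+1)^2 = \sigma_0(p^{v_p(\Delta)})^2$, giving $\prod_p c_p \le \sigma_0(\Delta)^2 = \Delta^{o(1)}$ directly, whereas you split off the bounded factor and invoke $\omega(\Delta) = O(\log|\Delta|/\log\log|\Delta|)$ separately; these are the same estimate with slightly different bookkeeping.
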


Compare this lemma with work of 
de Weger \cite{deWeger1998}*{Theorem~3}, 
Hindry \cite{Hindry2007}*{Lemma~3.5}, and 
Watkins \cite{Watkins2008-ExpMath}*{pp.~114--115}.

\begin{proof}
For $n \ge 1$, let $\sigma_0(n)$ denote the number of positive divisors of $n$.
Factor the minimal discriminant $\Delta$ of $E$ as $\prod_p p^{e_p}$.
Whenever $e_p>0$, Kodaira and N\'eron proved that 
$c_p \le 4$ or $c_p=e_p$ \cite{SilvermanAEC2009}*{Theorem~VII.6.1},
so in any case $c_p \le (e_p+1)^2 = \sigma_0(p^{e_p})^2$.
Thus
\[
	\prod_p c_p \le \sigma_0(\Delta)^2 = \left( \Delta^{o(1)} \right)^2 = H^{o(1)}.\qedhere
\]
\end{proof}

\begin{remark} \label{R:de Weger}
If instead of $\sigma_0(n) = n^{o(1)}$
we used the more precise bound $\sigma_0(n) \le n^{O(1/\log \log n)}$,
we would get a direct proof of \cite{deWeger1998}*{Theorem~3}, 
which states that $\prod_p c_p \le \Delta^{O(1/\log \log \Delta)}$.
\end{remark}

\subsection{Average size of $L(E,1)$}

The Riemann hypothesis for the $L$-functions $L(E,s)$
would imply the corresponding 
Lindel\"{o}f hypothesis \cite{Iwaniec-Sarnak2000}*{p.~713},
which in turn would imply 
\begin{equation}
\label{E:Lindelof}
	L(E,1) \stackrel{?}\le H^{o(1)}.
\end{equation}
For our calibration, however, 
we need only estimate \emph{averages} of $L(E,1)$, 
so we conjecture the following.

\begin{conjecture}
\label{C:average L(E,1)}
We have $\underset{E \in \EE_{\le H}}{\Average}\; L(E,1) \stackrel{?}= H^{o(1)}$ 
as $H \to \infty$.
\end{conjecture}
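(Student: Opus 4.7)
The plan is to bound the first moment of $L(E,1)$ over $\EE_{\le H}$ via the approximate functional equation and cancellation among Hecke eigenvalues $a_n(E)$ as $E$ varies. First, I would apply the approximate functional equation: for each $E$ with conductor $N=N(E)$ and root number $w(E)$,
\[
    L(E,1) = \bigl(1+w(E)\bigr)\sum_{n\ge 1}\frac{a_n(E)}{n}\,V\!\left(\frac{n}{\sqrt{N}}\right),
\]
for a fixed smooth function $V$ with rapid decay at infinity (in particular $L(E,1)=0$ whenever $w(E)=-1$). Since $N \ll H^{C}$ for $E\in\EE_{\le H}$ with some absolute constant $C$, the sum is effectively truncated at $n\le H^{1/2+o(1)}$.

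Next, swap the order of summation. After separating the root number $w(E)$ via its expression as a product of local factors depending on $(A,B)$ modulo a controlled modulus, the conjecture reduces to showing
\[
    \Sigma_n(H)\colonequals \sum_{E\in\EE_{\le H}} a_n(E)\,V\!\left(\frac{n}{\sqrt{N(E)}}\right) \;\ll\; \#\EE_{\le H}\cdot H^{o(1)}
\]
uniformly in $n\le H^{1/2+o(1)}$; summing $\Sigma_n(H)/n$ over $n$ then yields the conjecture.

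Third, attack $\Sigma_n(H)$ in two regimes. For $n\le H^{\alpha}$ with $\alpha>0$ small, the Weierstrass parameters $(A,B)$ defining $E\in\EE_{\le H}$ equidistribute modulo $n$, and the character sum $\sum_{(A,B)\in\F_p^2} a_p(E_{A,B})$ exhibits substantial cancellation (a Birch-type identity averaging traces of Frobenius across the moduli space of Weierstrass cubics mod $p$); multiplicativity of $a_n(\cdot)$ propagates this cancellation to composite moduli, giving power savings in $\Sigma_n(H)$. For $H^{\alpha}<n\le H^{1/2+o(1)}$, one would combine the Deligne--Hasse bound $|a_n(E)|\le d(n)\sqrt{n}$ with a large-sieve-type inequality for Hecke eigenvalues in this family, or equivalently with Cauchy--Schwarz against a uniform second-moment bound for $L(E,1)$ (in the spirit of work of Michel, Young, and others on moments of $L$-functions in families of elliptic curves ordered by height).

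The main obstacle is precisely this intermediate-to-large $n$ regime. The integer box $\EE_{\le H}$ is far from a full set of residues modulo $n$ comparable to $H^{1/2}$, so elementary equidistribution fails and one must invoke a genuine large-sieve inequality or a high-moment input. Producing such analytic input with the required uniformity in the conductor --- while simultaneously handling the varying root number, the torsion contribution to $a_n(E)$, and the primes of bad reduction --- is what makes Conjecture~\ref{C:average L(E,1)} genuinely conjectural at present rather than a direct consequence of known technology.
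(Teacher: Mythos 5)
The statement you are addressing is not a theorem in the paper but rather Conjecture~\ref{C:average L(E,1)}, which the authors explicitly state \emph{as a conjecture} because the bound $L(E,1)\le H^{o(1)}$ for individual curves requires the Lindel\"of hypothesis (hence RH for $L(E,s)$). The paper provides no proof; instead, it offers two pieces of evidence: Lemma~\ref{L:average L-value}, an unconditional statement for quadratic twist families following from Kohnen--Zagier's formula relating $L(E_d,1)$ to coefficients of a half-integral weight form, and a citation of Young~\cite{Young2006-PLMS}, who proved the stronger $\asymp 1$ form~\eqref{eqn:strongerLE1} conditionally on RH for Dirichlet $L$-functions and equidistribution of root numbers.

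Your sketch is a sensible attack plan and is broadly in the same spirit as Young's conditional result: approximate functional equation, truncation at roughly $\sqrt{N}$, exchange of summation, Birch-type character sum cancellation for small moduli, and a large-sieve or second-moment input for large $n$. You also correctly identify the real bottleneck --- the regime $n \asymp H^{1/2}$ where the box of coefficients $(A,B)$ of size $H^{1/3}\times H^{1/2}$ no longer covers a full residue system, so equidistribution fails and one needs genuine large-sieve or moment technology with uniformity in the conductor --- and you correctly conclude that the statement is genuinely conjectural at present. Since the paper does not attempt a proof either, there is no discrepancy to report; your proposal is a reasonable account of why the conjecture is plausible and of what analytic obstructions stand in the way of making it a theorem. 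One small remark: you might strengthen the supporting evidence by mentioning the unconditional Kohnen--Zagier route in the quadratic twist case (the paper's Lemma~\ref{L:average L-value}), which sidesteps the approximate functional equation entirely by expressing $L(E_d,1)$ in terms of Fourier coefficients of a single fixed cusp form, showing that the desired cancellation is real at least in that thinner family.
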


In quadratic twist families, the following stronger (unconditional) variant 
of Conjecture~\ref{C:average L(E,1)} is known.

\begin{lemma}
\label{L:average L-value}
Let $E_1$ be an elliptic curve over $\Q$.
Let $E_d$ be its twist by $\Q(\sqrt{d})/\Q$.
Given $D>0$,
let $d$ range over fundamental discriminants satisfying $|d| \le D$.
Then $\underset{|d| \le D}\Average\; L(E_d,1) \asymp 1$
as $D \to \infty$.
\end{lemma}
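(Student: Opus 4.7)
My plan is to use the standard analytic technique for computing mean values of $L$-functions in quadratic twist families, namely the approximate functional equation combined with character sum estimates.

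First I would observe that the functional equation forces $L(E_d,1)=0$ for every $d$ with $w(E_d)=-1$, and that such $d$ form a positive proportion of fundamental discriminants with $|d|\le D$. Consequently the claim reduces to proving
\[
	\sum_{\substack{|d|\le D \\ w(E_d)=+1}} L(E_d,1) \asymp D.
\]
For the remaining $d$, I would invoke the approximate functional equation
\[
	L(E_d,1) \;=\; 2\sum_{n\ge 1} \frac{a_n(E_1)\,\chi_d(n)}{n}\,V\!\left(\frac{n}{\sqrt{N_d}}\right),
\]
where $V$ is a fixed smooth rapidly decreasing function with $V(0)=1$, the conductor satisfies $N_d\asymp d^2$, and $\chi_d(n)=\left(\tfrac{d}{n}\right)$ away from the bad primes of $E_1$ (the ramified primes contribute bounded correction factors that do not affect the asymptotic).

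The next step is to interchange the order of summation and split the $n$-sum according to whether $n$ is a perfect square. For $n=m^2$ the character value $\chi_d(m^2)$ equals $\mathbf{1}_{(d,m)=1}$, so the inner $d$-sum is $\asymp D$ in the effective range of $V$ (roughly $m\ll d^{1/2}$). Summing on $m$, using the Hasse bound $|a_{m^2}|\le \tau(m^2)m$ together with the analytic behavior of $L(\Sym^2 E_1,s)$ to evaluate the resulting series, one obtains a diagonal contribution of the form $C_{E_1}\cdot D + o(D)$ with an explicit positive constant $C_{E_1}$, essentially $L(\Sym^2 E_1,1)/\zeta(2)$ up to local factors at bad primes. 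For the off-diagonal terms ($n$ not a perfect square), I would apply Heath-Brown's quadratic large sieve (or the cruder P\'olya--Vinogradov bound) to show $\sum^+_{|d|\le D}\chi_d(n)\ll D^{1/2}\,n^{\varepsilon}$; combined with the rapid decay of $V$ beyond $n\asymp\sqrt{N_d}$, this yields a total off-diagonal contribution that is $o(D)$.

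Finally, dividing by $\#\{d\colon |d|\le D\}\asymp D$ gives the average $\asymp 1$. The main obstacle is the uniform control of the off-diagonal character sums, which is precisely what the large sieve for real characters is designed to provide; a secondary technical point is verifying the positivity of the constant $C_{E_1}$, which reduces to the nonvanishing of the relevant Euler factors of $L(\Sym^2 E_1,s)$ at $s=1$, a standard calculation. The positivity of the lower bound itself is automatic once the main term dominates, since each nonzero $L(E_d,1)$ with $w(E_d)=+1$ is itself nonnegative (in fact a nonnegative real number by the functional equation and the BSD-predicted interpretation).
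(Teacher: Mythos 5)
The paper disposes of this lemma in one line by citing Kohnen--Zagier \cite{Kohnen-Zagier1981}*{Corollaries 5 and 6}, i.e., by invoking the half-integral-weight/Waldspurger connection: $L(E_d,1)$ is expressed in terms of the square of the $|d|$th Fourier coefficient of a weight~$3/2$ cusp form, and the Rankin--Selberg asymptotics for the mean square of those coefficients give the first moment $\asymp D$ directly. Your route --- the approximate functional equation followed by a diagonal/off-diagonal decomposition --- is genuinely different, and is also a standard and legitimate way to obtain the first-moment asymptotic (it appears in work of Murty--Murty, Iwaniec, and others). The trade-off is that the Kohnen--Zagier route is essentially a citation, while yours requires a real analytic estimate.

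The issue is with how you propose to close the off-diagonal estimate. The ``cruder P\'olya--Vinogradov bound'' parenthetical is not a valid fallback: P\'olya--Vinogradov gives, for each fixed non-square $n$, a bound on $\sum_{|d|\le D}\chi_d(n)$ (summed over fundamental discriminants) of size roughly $n^{1/2+\varepsilon}$, uniform in $D$. Feeding this into the off-diagonal sum with Hasse's bound $|a_n|\ll n^{1/2+\varepsilon}$ yields
\[
\sum_{\substack{n\le D\\ n\ne\square}} \frac{|a_n|}{n}\cdot n^{1/2+\varepsilon} \;\ll\; \sum_{n\le D} n^{\varepsilon} \;\ll\; D^{1+\varepsilon},
\]
which is \emph{not} $o(D)$; it is the same order as the main term. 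Moreover, the bound you actually assert for the inner $d$-sum, $\ll D^{1/2}n^{\varepsilon}$, is not what either P\'olya--Vinogradov or Heath-Brown's quadratic large sieve gives for an individual $n$: the large sieve is a mean-square estimate over both $d$ and $n$ simultaneously, not a pointwise bound in $n$. To make your approach rigorous you would need either (i) a smooth cutoff in $d$ plus Poisson summation modulo $n$ in the $d$-variable, which regenerates the diagonal from the zero frequency and gives square-root savings in the dual sum, or (ii) a careful application of Heath-Brown's quadratic large sieve with a Cauchy--Schwarz step, after accounting for the $d$-dependence of the $V$-cutoff. Neither of these is in the proposal as written, so there is a genuine gap in the off-diagonal treatment. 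The rest --- restricting to $w(E_d)=+1$, identifying the diagonal with $L(\Sym^2 E_1,1)$ up to local factors, and arguing positivity of the resulting constant --- is sound.
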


\begin{proof}
This is a consequence of work of Kohnen and Zagier \cite{Kohnen-Zagier1981}*{Corollaries~5 and~6}.
\end{proof}

\begin{remark}
Lemma~\ref{L:average L-value} makes plausible the conjecture that
\begin{equation} \label{eqn:strongerLE1}
\underset{E \in \EE_{\le H}}{\Average}\; L(E,1) \stackrel{?}\asymp 1 
\end{equation}
as $H \to \infty$.
This conjecture, slightly stronger than Conjecture~\ref{C:average L(E,1)}, 
would not affect the calibration of our heuristic here, 
but is interesting in its own right.  
Young \cite{Young2006-PLMS} proved that \eqref{eqn:strongerLE1} holds 
under the Riemann hypothesis for Dirichlet $L$-functions 
and the equidistribution of the root number of elliptic curves.
\end{remark}  

\subsection{Average size of the Shafarevich--Tate group}
\label{S:average size of Sha}

Let $E \in \EE$.
Define
\[
	\Sha_0(E) \colonequals 
	\begin{cases}
		\#\Sha(E), &\textup{if $\rk E(\Q) = 0$;} \\
		0, &\textup{if $\rk E(\Q) > 0$.}
	\end{cases}
\]
Then the ``rank~$0$ part'' of the Birch and Swinnerton-Dyer conjecture 
states that
\begin{equation} \label{eqn:LE1}
	L(E,1) \stackrel{?}= 
	\frac{\Sha_0 \, \Omega \prod_p c_p}{\# E(\Q)^2_{\tors}};
\end{equation}
see Wiles \cite{Wiles2006} for an exposition
and Stein and Wuthrich \cite{Stein-Wuthrich2013}*{Section~8}
for a summary of some more recent advances towards it.

\begin{theorem}[cf.~\cite{Lang1983-conjectured}*{Conjecture~1}]
\label{T:bound on Sha_0}
Assume the Birch and Swinnerton-Dyer conjecture.  Then the following hold.
\begin{enumerate}[\upshape (a)]
\item 
\label{I:Sha_0 and L(E,1)}
For $E \in \EE$ of height $H$,
we have $\Sha_0(E) = H^{1/12+o(1)} L(E,1)$.
\item 
\label{I:Sha_0}
For $E \in \EE$ of height $H$, 
if the Riemann hypothesis for $L(E,s)$ holds,
then $\Sha_0(E) \leq H^{1/12+o(1)}$. 
\item 
\label{I:average of Sha_0}
If Conjecture~\textup{\ref{C:average L(E,1)}} holds, then $\underset{E \in \EE_{\le H}}{\Average}\; \Sha_0(E) = H^{1/12+o(1)}$ 
as $H \to \infty$.
\end{enumerate}
\end{theorem}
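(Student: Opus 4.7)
The plan is to solve the Birch and Swinnerton-Dyer formula~\eqref{eqn:LE1} for $\Sha_0(E)$ and substitute the estimates already established in this section. When $\rk E(\Q) = 0$, \eqref{eqn:LE1} rearranges to
\[
\Sha_0(E) \;=\; \frac{L(E,1) \cdot \#E(\Q)_{\tors}^2}{\Omega \cdot \prod_p c_p};
\]
when $\rk E(\Q) > 0$, BSD predicts $L(E,1)=0$ (since the order of vanishing matches the rank), and $\Sha_0(E)=0$ by definition, so both sides of the identity in~(a) vanish and the claim is trivial.

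For part~(a) in the rank zero case, I substitute three uniform estimates. First, Mazur's torsion theorem bounds $\#E(\Q)_{\tors}$ by an absolute constant, so $\#E(\Q)_{\tors}^2 = H^{o(1)}$. Second, Corollary~\ref{C:real period} gives $H^{-1/12} \ll \Omega \ll H^{-1/12} \log H$, so $\Omega = H^{-1/12 + o(1)}$ and hence $\Omega^{-1} = H^{1/12+o(1)}$ uniformly in $E \in \EE_{\le H}$. Third, Lemma~\ref{L:product of Tamagawa factors} gives $\prod_p c_p = H^{o(1)}$ uniformly. Multiplying the factors and absorbing everything other than $L(E,1)$ into $H^{1/12+o(1)}$ yields $\Sha_0(E) = H^{1/12+o(1)} L(E,1)$, establishing~(a).

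Part~(b) is an immediate consequence of~(a): the Riemann hypothesis for $L(E,s)$ implies the Lindel\"of bound $L(E,1) \le H^{o(1)}$ recalled in~\eqref{E:Lindelof}, and multiplying by $H^{1/12+o(1)}$ gives $\Sha_0(E) \le H^{1/12+o(1)}$.

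For part~(c), I exploit the fact that the $o(1)$ produced in~(a) is uniform over $E \in \EE_{\le H}$, so it may be pulled outside an average over $\EE_{\le H}$. Averaging both sides of~(a) and invoking Conjecture~\ref{C:average L(E,1)} gives
\[
\underset{E \in \EE_{\le H}}{\Average}\; \Sha_0(E) \;=\; H^{1/12+o(1)} \underset{E \in \EE_{\le H}}{\Average}\; L(E,1) \;=\; H^{1/12+o(1)} \cdot H^{o(1)} \;=\; H^{1/12+o(1)}.
\]
The proof is essentially a calibration of exponents; the only point that merits attention is the uniformity of the $o(1)$ in each ingredient, which is the main obstacle to worry about. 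Corollary~\ref{C:real period} and Lemma~\ref{L:product of Tamagawa factors} are both phrased with implicit constants independent of $E$, and Mazur's torsion bound is uniform a fortiori, so uniformity holds and the averaging step in~(c) is legitimate.
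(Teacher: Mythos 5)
Your proof is correct and follows essentially the same route as the paper: solve the rank-$0$ BSD formula~\eqref{eqn:LE1} for $\Sha_0(E)$, substitute Mazur's torsion bound, Corollary~\ref{C:real period} for $\Omega$, and Lemma~\ref{L:product of Tamagawa factors} for $\prod_p c_p$, then combine (a) with the Lindel\"of bound~\eqref{E:Lindelof} for (b) and with Conjecture~\ref{C:average L(E,1)} for (c). The one small point you skipped is that the real period in~\eqref{eqn:LE1} is defined via a N\'eron differential on a minimal Weierstrass model rather than $dx/2y$ on the short model, so one should invoke Remark~\ref{R:Neron differential} (which shows the two differ by a bounded factor) before applying Corollary~\ref{C:real period}; this is a bookkeeping step, not a gap. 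Your explicit note that the $o(1)$ terms are uniform in $E\in\EE_{\le H}$ is a useful addition, since uniformity is indeed what licenses the averaging step in~(c).
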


\begin{proof}
By Mazur~\cite{Mazur1977}, we have $\# E(\Q)_{\tors} \le 16$.
By Corollary~\ref{C:real period} and Remark~\ref{R:Neron differential}, 
we have $\Omega = H^{-1/12+o(1)}$.
By Lemma~\ref{L:product of Tamagawa factors},
we have $\prod c_p = H^{o(1)}$.
Substitute all this into \eqref{eqn:LE1} to obtain~\eqref{I:Sha_0 and L(E,1)}.
Combine~\eqref{I:Sha_0 and L(E,1)} 
with \eqref{E:Lindelof} to obtain~\eqref{I:Sha_0}.
Combine~\eqref{I:Sha_0 and L(E,1)} with 
Conjecture~\ref{C:average L(E,1)}
to obtain~\eqref{I:average of Sha_0}.
\end{proof}

\begin{remark}
Theorem~\ref{T:bound on Sha_0}\eqref{I:average of Sha_0} 
agrees with a conjecture of Heath-Brown
and with numerical investigations
of D\k{a}browski, J\k{e}drzejak, and Szymaszkiewicz 
\cite{Dabrowski-Jedrzejak-Szymaszkiewicz2016}*{Section~7}.
\end{remark}

\begin{remark}
\label{R:Waldspurger}
In a family of quadratic twists $E_d$, 
we have $\height E_d \asymp d^6$, 
so Theorem~\ref{T:bound on Sha_0}\eqref{I:Sha_0} would imply
$\Sha_0(E_d) \stackrel{?}\le d^{1/2+o(1)}$ as $d \to +\infty$.
This is consistent with the work of 
Waldspurger \cite{Waldspurger1981}*{Corollaire~2, p.~379} 
relating $\sqrt{\Sha_0(E_d)}$ to the $d$th coefficient $a_d$
of a weight~$3/2$ modular form,
since for such a form we expect $|a_d| \le d^{1/4+o(1)}$.
\end{remark}

\section{The basic model for ranks and Shafarevich--Tate groups}
\label{S:heuristics for ranks}

The construction of the measure $\AA_r$ in Section~\ref{S:distribution of Sha} 
involved alternating matrices that modeled Shafarevich--Tate groups 
of elliptic curves of rank $r$. 
Specifically, the matrices were required to have corank~$r$.
Inspired by this model and interested in the distribution of ranks among all elliptic curves, we propose the following model 
for the arithmetic of an elliptic curve $E$ over $\Q$ of height $H$.
Informally, to each elliptic curve $E$ we will associate a random matrix $A \in \M_n(\Z)_{\alt, \le X}$ such that $\rk(\ker A)$ models $\rk E(\Q)$ and $(\coker A)_{\tors}$ models $\Sha(E)$.
A more precise version of our model depends 
on increasing functions $\nnn(H)$ and $X(H)$ 
to be calibrated later,
with $\nnn(H),X(H) \to \infty$ as $H \to \infty$.  

\subsection{The random model}
\label{S:random model}

We now define a collection of 
independent random variables $(\rk'_E,\Sha'_E)_{E \in \EE}$
taking values in $\Z_{\ge 0} \times \symplectic$.
These random variables will be defined as functions of random matrices, and the only input from the elliptic curve $E$ will be its height.

To define the random variable with index $E$,
let $H \colonequals \height E$,
choose $n$ uniformly at random from ${\Z \intersect [\nnn(H),\nnn(H)+2)}$,
choose $A \in \M_n(\Z)_{\alt, \le X(H)}$ uniformly at random,
define $\rk'_E \colonequals \rk(\ker A)$,
and define $\Sha'_E$ to be $(\coker A)_{\tors}$
equipped with its canonically defined nondegenerate alternating pairing~\cite{Bhargava-Kane-Lenstra-Poonen-Rains2015}*{Sections 3.4 and~3.5}.

\begin{remark}
Replacing $[\nnn(H),\nnn(H)+2)$
with any other interval of length $o(\nnn(H))$ containing $\nnn(H)$
would not affect our results as long as the parity of $n$
becomes equidistributed as $H \to \infty$.
\end{remark}

In the rest of this section, we will prove unconditional theorems about random integral alternating matrices, in particular  
 about the statistical behavior 
of $\rk'_E$ and $\Sha'_E$ as $E$ varies.
These will inform our conjectures about $\rk E(\Q)$ and $\Sha(E)$.

\subsection{First results on random  matrices}

Define the random variable
\[
	\Sha'_{0,E} \colonequals 
	\begin{cases}
		\#\Sha'_E, &\textup{if $\rk'_E = 0$;} \\
		0, &\textup{if $\rk'_E > 0$.}
	\end{cases}
\]
We first prove a theorem about the individual random variables $(\rk'_E,\Sha'_E)$.

\begin{theorem} 
\label{T:random variables for one E}
If the function $X(H)$ grows sufficiently quickly relative to $\nnn(H)$,
then the following hold for $E \in \EE$ 
as $H \colonequals \height E \to \infty$.
\begin{enumerate}[\upshape (a)]
\item \label{I:rank 0 and 1}
\begin{enumerate}[\upshape (1)]
\setcounter{enumii}{-1}
\item 
The probability that $\rk'_E=0$ is $1/2 - o(1)$.
\item 
The probability that $\rk'_E=1$ is $1/2 - o(1)$.
\item 
The probability that $\rk'_E \ge 2$ is $o(1)$.
\end{enumerate}
\item \label{I:probability of Sha'}
Let $r \in \{0,1\}$ and $G \in \symplectic_p$.  
Then 
\[
	\Prob(\Sha'_E[p^{\infty}] \isom G \mid \rk'_E = r) 
	= \Prob_{\DD_{r,p}}(G) + o(1).
\]
(See~\eqref{E:DD_r} for an explicit formula for $\Prob_{\DD_{r,p}}(G)$.)

\item \label{I:global Sha' for rank 1}
 \begin{enumerate}[\upshape (1)]
 \item Let $G \in \symplectic$.  Then
\[
	\Prob(\Sha'_E \isom G \mid \rk'_E = 1) 
	= \prod_p \Prob_{\DD_{1,p}}(G[p^\infty]) + o(1).
\]
 \item More generally, if $\calG \subseteq \symplectic$, then
\[
	\Prob(\Sha'_E \in \calG \mid \rk'_E = 1) 
	= \sum_{G \in \calG} \prod_p \Prob_{\DD_{1,p}}(G[p^\infty]) + o(1).
\]
 \end{enumerate}

\item \label{I:probability of global Sha'}

 \begin{enumerate}[\upshape (1)]
 \item Let $G \in \symplectic$.
       Then $\Prob(\Sha'_E \isom G \mid \rk'_E = 0) = o(1)$.
 \item If $\calG$ is the set of squares of cyclic groups, then 
\[
	\Prob( \Sha'_E \in \calG  \mid \rk'_E = 0)
	= \prod_p \left(1 - \frac{1}{p^2} + \frac{1}{p^3} \right) + o(1).
\]
 \end{enumerate}

\item \label{I:size of Sha'}
 \begin{enumerate}[\upshape (1)]
   \item \label{I:upper bound on Sha'}
   We have $\Sha'_{0,E} \le (X(H)^{\nnn(H)})^{1 + o(1)}$.
   \item \label{I:lower bound on Sha'} 
   The probability that $\Sha'_{0,E} \ge (X(H)^{\nnn(H)})^{1 - o(1)}$ 
   is at least $1/3$.
 \end{enumerate}
\item \label{I:rank at least r}
For fixed $r \ge 1$, 
we have $\Prob(\rk'_E \ge r) = (X(H)^{\nnn(H)})^{-(r-1)/2 + o(1)}$.
\end{enumerate}
\end{theorem}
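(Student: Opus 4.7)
Randomness in the model comes from two sources: the parity of $n$, uniform on a consecutive even--odd pair near $\nnn(H)$, and the uniform choice of $A \in \M_n(\Z)_{\alt,\le X(H)}$. The key observation is that integer alternating matrices with entries bounded by $X$ equidistribute modulo any fixed modulus $M$ as $X \to \infty$, so any condition depending on $A$ only through $A \bmod M$ reduces to the Haar-measure analogue on $\M_n(\Z_p)_{\alt}$, whose $n \to \infty$ limit is the symplectic distribution $\AA_r$, identified by Theorem~\ref{T:three same} with the Delaunay distribution $\DD_r$. For part~(a), note that an alternating matrix has even rank, so $\rk'_E \equiv n \pmod{2}$; the event $\rk'_E \ge 2$ (when $n$ is even) is cut out by $\pf A = 0$, and the event $\rk'_E \ge 3$ (when $n$ is odd) by the vanishing of all size-$(n-1)$ principal Pfaffians. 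Both loci are proper subvarieties of $\M_n(\Z)_{\alt}$ of positive codimension, so a routine integer-point bound gives them probability $O(X^{-1}) = o(1)$ as $X(H) \to \infty$. Combined with the $50/50$ split on the parity of $n$, this proves (a).

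\textbf{Parts (b), (c), (d).} For (b), the isomorphism class of $\Sha'_E[p^\infty]$ as a symplectic group is determined by $A \bmod p^k$ for a $k$ depending only on $\#G$; equidistribution mod $p^k$ then converts the conditional probability, given $\rk'_E = r \in \{0,1\}$, into the Haar-measure probability $\AA_{n,r}(G)$. Taking $n \to \infty$ through the correct parity and applying Theorem~\ref{T:three same} yields $\Prob_{\DD_{r,p}}(G)$. For (c), we additionally need asymptotic independence across primes: Chinese-remainder equidistribution modulo $\prod_{p \in S} p^{k_p}$ supplies this for any finite set $S$, and a uniform tail bound completes the passage to the infinite product, since $\Prob_{\DD_{1,p}}(\text{trivial}) = \prod_{i \ge 2}(1 - p^{1-2i}) = 1 - O(p^{-3})$ gives $\sum_p (1 - \Prob_{\DD_{1,p}}(\text{trivial})) < \infty$. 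Summing over $G \in \calG$ gives the second statement. For (d)(1), the hypersurface $\{\det A = \#G\}$ in $\M_n(\Z)_{\alt}$ has codimension $1$, so an integer-point bound gives $\Prob(\Sha'_E \isom G \mid \rk'_E = 0) \le \Prob(|\det A| = \#G \mid \rk'_E = 0) = O(X^{-1}) = o(1)$. For (d)(2), the ``square of cyclic group'' condition factors primewise; a direct calculation in $\DD_{0,p}$ yields $\Prob_{\DD_{0,p}}(\text{square of cyclic $p$-group}) = 1 - p^{-2} + p^{-3}$, and a tail bound uniform in $p$, analogous to that in (c), produces the infinite product.

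\textbf{Parts (e), (f), and main obstacle.} For (e)(1), Hadamard's inequality yields $|\det A| \le (\sqrt{n}\,X)^n$, which is $(X^n)^{1+o(1)}$ precisely when $\log \nnn(H) = o(\log X(H))$; this is the meaning of ``$X(H)$ grows sufficiently quickly.'' For (e)(2), condition on $n$ even (probability $1/2$), expand $\pf A = a_{ij} Q_1 + Q_0$ linearly in one entry $a_{ij}$ where $Q_1$ is a Pfaffian of an alternating $(n-2) \times (n-2)$ minor independent of $a_{ij}$, and combine the anticoncentration estimate $\Prob(|\pf A| \le T \mid Q_1) \le 2T/|Q_1| + O(1/X)$ with the inductive observation that $|Q_1|$ is typically close to $X^{n/2-1}$; this yields $|\pf A| \ge X^{n/2 - o(n)}$ with conditional probability at least $2/3$, so $\Sha'_{0,E} = |\pf A|^2 \ge (X^n)^{1-o(1)}$ unconditionally with probability at least $1/3$. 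Finally, (f) is a direct consequence of Theorem~\ref{thm:EskinKatznelsonAlternating}: its count of alternating integer matrices of corank $\ge r$ with entries $\le X$, divided by the total $\asymp X^{n(n-1)/2}$ in the box, yields the probability $(X^n)^{-(r-1)/2+o(1)}$ after averaging over the two parities of $n$. The deepest single ingredient is Theorem~\ref{thm:EskinKatznelsonAlternating} itself (deferred to a later section); within the present proof, the most delicate steps are the uniform-in-$p$ tail bounds in parts (c) and (d)(2) and the inductive anticoncentration argument in (e)(2).
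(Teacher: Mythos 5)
Your proposal follows the paper's general strategy for parts (a), (b), (e)(1), and (f), and the direction of (d)(1) is the same. Two points deserve closer scrutiny.

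\textbf{Parts (c) and (d)(2).} You correctly note that CRT equidistribution handles any finite set of primes and that a ``uniform tail bound'' is needed, but the bound you cite---$\Prob_{\DD_{1,p}}(\text{trivial}) = 1 - O(p^{-3})$---controls only the limiting distribution $\DD_{1,p}$, not the finite-$n$, finite-$X$ box probabilities. To show that the integer-matrix probabilities converge to the infinite product, you must bound $\Prob(\Sha'_E[p^\infty] \ne \{0\} \mid \rk'_E = 1)$ uniformly in $n$ and $X$ for all large $p$ simultaneously; without such a uniform estimate, you cannot interchange the limits over $n, X$ with the sum over large primes. The paper supplies this via the Ekedahl sieve in the Poonen--Stoll form: for odd $n$, the $\F_p$-reduction of any $A$ with $(\coker A)_{\tors}[p^\infty]$ nontrivial has corank $\ge 3$, yielding a Haar-measure bound $s_p = O(p^{-2})$ uniform in $n$, which is exactly the hypothesis of the sieve lemma. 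The same mechanism, with a corank-$\ge 2$ locus, handles the ``non-cyclic'' condition in (d)(2). Convergence of the infinite product $\prod_p \Prob_{\DD_{1,p}}(G[p^\infty])$ is necessary but not sufficient; the sieve is the missing ingredient, and it is a genuine one.

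\textbf{Part (e)(2).} The paper's argument is softer: $|\det A|$ is a fixed nonzero degree-$n$ polynomial evaluated on a box whose side length dominates any $n$-dependent scale, so by scaling/compactness it is typically of size $\asymp_n X^n$, and the $n$-dependent constants are absorbed into $X^{o(1)}$ once $X$ grows quickly enough relative to $\nnn$. Your Pfaffian anticoncentration argument (linear expansion in one entry, inductive control of the minor $|Q_1|$) is a plausible alternative, but it requires setting up and proving the induction on $|Q_1|$, which is more work than the paper's one-line observation. Also, a conditional probability of exactly $2/3$ combined with $\Prob(\rk'_E = 0) = 1/2 - o(1)$ only gives $1/3 - o(1)$; you should aim for a conditional bound strictly above $2/3$ (the paper uses $9/10$), as the paper itself remarks that the $1/3$ can be improved to anything below $1/2$.
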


The proof of~\eqref{I:rank at least r} will require 
the main theorem of Section~\ref{S:counting matrices}, 
while the proofs of \eqref{I:rank 0 and 1}--\eqref{I:size of Sha'} 
are comparatively straightforward 
(although some of them require the Ekedahl sieve).
The constant $1/3$ in \eqref{I:size of Sha'}(2) 
could be improved to any constant less than $1/2$, 
as will be clear from the proof.

\begin{proof}
Let $X \colonequals X(H)$ and $\nnn \colonequals \nnn(H)$.
Any constant depending on $n$ can be assumed to be $X^{o(1)}$
if $X$ grows sufficiently quickly relative to $\nnn$.
\begin{enumerate}[\upshape (a)]
\item 
Since we choose $n$ uniformly in $\Z \intersect [\nnn, \nnn+2)$, 
it is even half of the time and odd half of the time.
Any alternating matrix has even rank,
and a generic alternating matrix of rank $n$ has rank $n$ or $n-1$
according to whether $n$ is even or odd.
As $X \to \infty$ for fixed $n$, 
the probability that an \emph{integer} matrix $A \in \M_n(\Z)_{\alt, \leq X}$ 
has the generic rank tends to~$1$
(it fails on integer points in a proper Zariski-closed subset).
It follows formally that the same holds
if $X$ tends to $\infty$ sufficiently quickly relative to $\nnn$.
Thus $\Prob(\rk'_E=0) = 1/2 - o(1)$ as $H \to \infty$,
and the other statements follow similarly.
\item
For a fixed $k$, and any $n$,
the set $\M_n(\Z)_{\alt, \le X}$ becomes equidistributed in $\M_n(\Z/p^k\Z)$ as $X\rightarrow\infty$.
When $n$ is even, the same holds for the subset of 
$A \in \M_n(\Z)_{\alt,\le X}$ 
satisfying $\dim(\ker A)=0$ 
(these are the ones in a nonempty Zariski-open subset).
Given $G$, there exists a positive integer $k$ such that 
the condition $\Sha'_E[p^\infty] \isom G$ depends only on $A$ modulo $p^k$.  
Thus, if $X$ is sufficiently large relative to $n$, 
then 
\[
	\Prob(\Sha'_E[p^\infty] \isom G \mid \rk'_E = 0) 
	= \Prob_{\AA_0}(G) + o(1)
\]
as $H \to \infty$.
By Theorem~\ref{T:three same}, $\AA_0$ coincides with $\DD_0 = \DD_{0,p}$.
An analogous argument applies if we condition on $\rk'_E$ being~$1$.

\item 
(1) We apply the Ekedahl sieve as adapted by Poonen and Stoll
in \cite{Poonen-Stoll1999}*{Section~9.3}.
Consider a large odd integer $n$.
Let $U_\infty = \M_n(\R)_{\alt}$.
For each prime $p$, 
let $U_p$ be the set of $A \in \M_n(\Z_p)_{\alt}$
such that $(\coker A)_{\tors}[p^\infty] \not\isom G[p^\infty]$.
Let $s_p$ be the Haar measure of $U_p$.
The image of $U_p$ in $\M_n(\F_p)_{\alt}$ is contained
in the set of $\F_p$-points of the subscheme of $\Aff^{n^2}_{\Z}$
parametrizing matrices of corank~$\ge 3$,
and this implies that $s_p = O(1/p^2)$ uniformly in $n$.
Now, \cite{Poonen-Stoll1999}*{Lemma~21}
implies that hypothesis~(10) in \cite{Poonen-Stoll1999}*{Lemma~20} holds.
The conclusion of \cite{Poonen-Stoll1999}*{Lemma~20} 
for $S \colonequals \emptyset$
implies that the density of $A \in \M_n(\Z)_{\alt}$
satisfying $(\coker A)_{\tors} \isom G$ 
equals $\prod_p (1-s_p)$.
Because of the uniform estimate on $s_p$,
we may take the limit as $n \to \infty$ \emph{inside} the product,
in which case $1-s_p$ tends to $\Prob_{\DD_{1,p}}(G[p^\infty])$
by Theorem~\ref{T:three same},
so (1) follows.\\
(2) This follows formally from~(1) and the fact that
$\sum_{G \in \symplectic} \prod_p \Prob_{\DD_{1,p}}(G[p^\infty]) = 1$.

\item 
(1)
Since $\rk'_E=0$, we have $\# \Sha'_E=\abs{\det A}$.
By the same reasoning as in the proof of~\eqref{I:rank 0 and 1}, 
the probability that an integer matrix $A \in \M_n(\Z)_{\alt, \leq X}$ 
has $\abs{\det A}$ equal to a fixed value tends to~$0$ 
if $X$ tends to $\infty$ sufficiently quickly relative to $\nnn$.\\
(2) This follows from the Ekedahl sieve 
as in the proof of \eqref{I:global Sha' for rank 1}(1) above, 
since the ``square of cyclic''
condition can be checked on the $p$-primary part one $p$ at a time,
and for each $p$ the reductions modulo $p$
of the $A \in \M_n(\Z)_{\alt}$ such that $(\coker A)[p^\infty]$ 
is \emph{not} cyclic lie in the $\F_p$-points of a subscheme
of $\Aff^{n^2}_\Z$ of codimension $\ge 2$.

\item 
By~\eqref{I:rank 0 and 1},
we have $\Prob(\rk'_E = 0) = 1/2-o(1)$.
If $\rk'_E > 0$, then $\Sha'_{0,E}=0$.
If $\rk'_E = 0$, then $\Sha'_{0,E}$ 
is the absolute value of the determinant of a random
$A \in \M_n(\Z)_{\alt,\le X}$,
which is the absolute value of a degree~$n$ polynomial 
evaluated on a box of dimensions very large relative to $n$.
This implies that there are constants $m_n,M_n>0$ depending only on $n$
such that $\Sha'_{0,E} \le M_n X^n$
and such that $\Prob(\Sha'_{0,E} \ge m_n X^n | \rk'_E=0)$ is at least $9/10$.
Since $(1/2-o(1))(9/10) > 1/3$ and $X^{n + o(1)} = (X^{\nnn})^{1+o(1)}$,
the results follow.
\item
We have
\begin{align*}
	\Prob(\rk(\ker A)\geq r \mid n \equiv r \!\!\!\! \pmod{2})
	&=
	\frac{\#\{A \in \M_n(\Z)_{\alt,\le X} : \rk(\ker A) \ge r\}}
	{\# \M_n(\Z)_{\alt,\le X}} \\
	&= \frac{X^{n(n-r)/2 + o(1)}}{X^{n(n-1)/2 + o(1)}} 
	\quad\textup{(by Theorem~\ref{thm:EskinKatznelsonAlternating})} \\
	&= (X^n)^{-(r-1)/2 + o(1)} \\
	&= (X^\nnn)^{-(r-1)/2 + o(1)} \;\textup{(since $n = \nnn + o(\nnn) = \nnn(1+o(1))$),}
\end{align*}
as $H\ra\infty$.   
On the other hand,
\begin{align*}
	\Prob(\rk(\ker A)\geq r \mid n \not\equiv r \!\!\!\! \pmod{2})
	&= \Prob(\rk(\ker A)\geq r+1 \mid n \not\equiv r \!\!\!\! \pmod{2})\\
	&= (X^\nnn)^{-r/2 + o(1)},
\end{align*}
by a similar calculation.
Combining these yields the result.
\qedhere
\end{enumerate}
\end{proof}

\begin{remark}\label{R:Stanley-Wang}

See \cite{Wang-Stanley2017} 
for related work applying the Ekedahl sieve to study cokernels of 
not-necessarily-alternating integral matrices.
\end{remark}

\begin{remark}\label{R:robustcok}
The conclusion of 
Theorem~\ref{T:random variables for one E}\eqref{I:probability of Sha'} 
is likely robust.
The analogous conclusion for symmetric matrices is proved in \cite{Wood2017} 
without requiring any kind of uniform distribution of matrix entries.
For instance, it holds even if $X(H)$ is always $1$.
\end{remark}

\begin{remark}\label{R: high rank Sha}
The proof of~\eqref{I:probability of Sha'} implicitly used that 
the $\Z$-points on the moduli space of matrices (isomorphic
to $\Aff^{n^2}$) are equidistributed in the $\Z_p$-points.
For $r \ge 2$, this affine space gets replaced by a subvariety $V$
defined by the vanishing of certain minors,
and it is not clear that $V(\Z)$ is equidistributed in $V(\Z_p)$.
In fact, heuristics inspired by the circle method suggest that this
might be false, and numerical experiments also suggest this.
In this case, perhaps the three conjectural answers 
to Question~\ref{Q:distribution of Sha} are wrong for $r \ge 2$.
In particular, perhaps the ``canonical probability measure'' 
from \cite{Bhargava-Kane-Lenstra-Poonen-Rains2015}*{Section~2}
on the set
\[ 
	\{A \in \M_n(\Z_p)_{\alt} : \rk_{\Z_p}(\ker A) = r \} 
\]
used to define $\AA_r$ (the measure proportional to $p$-adic volume)
should be replaced by the measure that reflects 
the density of integer points.
\end{remark}

Next we will pass from Theorem~\ref{T:random variables for one E},
which concerns the random variable associated to one $E$,
to Corollary~\ref{C:random variables for many E},
which concerns the aggregate behavior of the random variables
associated to all $E \in \EE_{\le H}$, as $H \to \infty$.
To do this we will apply the following standard result, 
a version of the law of large numbers 
in which the random variables do not have to be identically distributed.

\begin{lemma}[Theorem~2.3.8 of \cite{Durrett2010}]
\label{L:law of large numbers} 
Let $B_1,B_2,\ldots$ be a sequence of independent events.
For $i \ge 1$, let $p_i$ be the probability of $B_i$.
If $\sum p_m$ diverges, then with probability~$1$, 
\[
	\#\{i \le m : B_i \textup{ occurs}\} 
	= (1+o(1)) \sum_{i=1}^m p_i
\] 
as $m \to \infty$.
\end{lemma}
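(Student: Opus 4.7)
The plan is a classical second-moment argument combined with Borel--Cantelli along a carefully chosen subsequence. First I would set $X_i \colonequals \mathbf{1}_{B_i}$, $S_m \colonequals \sum_{i=1}^m X_i$, and $t_m \colonequals \sum_{i=1}^m p_i = \E[S_m]$; the hypothesis $\sum p_m = \infty$ guarantees $t_m \to \infty$. Independence of the $B_i$ together with $p_i \in [0,1]$ give
$$\mathrm{Var}(S_m) = \sum_{i=1}^m p_i(1-p_i) \le t_m,$$
so Chebyshev's inequality yields
$$\Prob\bigl( \lvert S_m - t_m \rvert > \varepsilon\, t_m \bigr) \le \frac{1}{\varepsilon^2 t_m}$$
for each fixed $\varepsilon > 0$. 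This already establishes $S_m/t_m \to 1$ in probability, which is the statement in expectation-scaled form.

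Next I would upgrade to almost sure convergence by passing to the subsequence $m_k \colonequals \min\{m : t_m \ge k^2\}$. Because $p_i \le 1$, one has $k^2 \le t_{m_k} < k^2 + 1$, and in particular $t_{m_{k+1}}/t_{m_k} \to 1$. The Chebyshev bounds sum along this subsequence,
$$\sum_{k \ge 1} \Prob\bigl( \lvert S_{m_k} - t_{m_k} \rvert > \varepsilon\, t_{m_k} \bigr) \le \sum_{k \ge 1} \frac{1}{\varepsilon^2 k^2} < \infty,$$
so by the Borel--Cantelli lemma, $S_{m_k}/t_{m_k} \to 1$ almost surely. Taking a countable intersection over $\varepsilon = 1/j$ removes the $\varepsilon$-dependence.

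The main (and essentially only) obstacle is interpolating between consecutive subsequence indices, but monotonicity of the counting function $S_m$ handles this cleanly: for $m_k \le m \le m_{k+1}$ one has the sandwich
$$\frac{S_{m_k}}{t_{m_k}} \cdot \frac{t_{m_k}}{t_{m_{k+1}}} \;\le\; \frac{S_m}{t_m} \;\le\; \frac{S_{m_{k+1}}}{t_{m_{k+1}}} \cdot \frac{t_{m_{k+1}}}{t_{m_k}},$$
and both outer expressions tend to $1$ almost surely by combining the subsequence convergence with $t_{m_{k+1}}/t_{m_k} \to 1$. This yields $S_m/t_m \to 1$ almost surely along the full sequence, which is the claimed asymptotic $\#\{i \le m : B_i \text{ occurs}\} = (1+o(1))\sum_{i=1}^m p_i$.
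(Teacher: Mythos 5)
Your proof is correct and is essentially the standard argument given in the cited reference (Durrett, Theorem~2.3.8): Chebyshev plus second moment gives convergence in probability, then Borel--Cantelli along the subsequence $m_k$ with $t_{m_k}\approx k^2$ upgrades to almost-sure convergence, and monotonicity of $S_m$ and $t_m$ interpolates between subsequence indices. The paper itself simply cites Durrett without reproducing the proof, and your argument matches his (note, as your proof shows, that only pairwise independence is actually needed).
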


\begin{corollary} 
\label{C:random variables for many E}
If $X(H)$ grows sufficiently quickly relative to $\nnn(H)$, then the following hold with probability~$1$.
\begin{enumerate}[\upshape (a)]
\item \label{I:distribution of rk'_E} 
We have
\begin{center}
$\ProbE(\{E  : \rk'_E = 0\})=\ProbE(\{E  : \rk'_E = 1\})=1/2$ and \\
$\ProbE(\{E  : \rk'_E \geq 2\})=0$.
\end{center}
\item\label{I:distribution of Sha'} 
For each $r \in \{0,1\}$ and $G \in \symplectic_p$,
\[
	\ProbE(\{E : \Sha'_E[p^\infty] \isom G \} \mid \rk'_E = r) 
	= \Prob_{\DD_{r,p}}(G).
\]

\item \label{I:global Sha' distribution for rank 1}
 \begin{enumerate}[\upshape (1)]
 \item For each $G \in \symplectic$, we have
\[
	\ProbE(\{E : \Sha'_E \isom G \} \mid \rk'_E = 1) 
	= \prod_p \Prob_{\DD_{1,p}}(G[p^\infty]).
\]
 \item More generally, for each $\calG \subseteq \symplectic$, we have
\[
	\ProbE(\{E : \Sha'_E \in \calG \} \mid \rk'_E = 1) 
	= \sum_{G \in \calG} \prod_p \Prob_{\DD_{1,p}}(G[p^\infty]).
\]
 \end{enumerate}

\item\label{I:distribution of global Sha'} 
 \begin{enumerate}[\upshape (1)]
 \item
For each $G \in \symplectic$, we have 
$
	\ProbE(\{E : \Sha'_E \isom G \} \mid \rk'_E = 0) 
	= 0.
$ 
\item If $\calG$ is the set of squares of cyclic groups,
then 
\[
\ProbE(\{E : \Sha'_E \in \calG \} \mid \rk'_E = 0)	= \prod_p \left(1 - \frac{1}{p^2} + \frac{1}{p^3} \right).
\]

\end{enumerate}
 
\item \label{I:average Sha'}
We have
\[ 
	\Average_{E \in \EE_{\le H}} \Sha'_{0,E} = (X(H)^{\nnn(H)})^{1 + o(1)}
\]
as $H \to \infty$, assuming that the function $f(H) \colonequals X(H)^{\nnn(H)}$
satisfies $f(2H) \le f(H)^{1+o(1)}$. 
\end{enumerate}
\end{corollary}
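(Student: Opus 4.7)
The plan is to deduce each part from the single-$E$ statement of Theorem~\ref{T:random variables for one E} by applying the law of large numbers (Lemma~\ref{L:law of large numbers}) to countably many independent events indexed by $E \in \EE$; independence across $E$ is built into the model of Section~\ref{S:random model}.  Enumerating $\EE = \{E_1, E_2, \ldots\}$ in order of increasing height (with arbitrary tie-breaking) makes $\EE_{\le H}$ an initial segment of the sequence, and by~\eqref{eqn:height} we have $\#\EE_{\le H} \asymp H^{5/6} \to \infty$, which lets us freely convert the partial sums in the index~$m$ of Lemma~\ref{L:law of large numbers} into partial sums over $\EE_{\le H}$.  Throughout, write $H_E \colonequals \height E$.

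For parts~(a)--(d) the argument is uniform.  Define the appropriate Bernoulli event $B_E$, read off $p_E \colonequals \Prob(B_E) = L + o(1)$ (as $H_E \to \infty$) from Theorem~\ref{T:random variables for one E}, and apply Lemma~\ref{L:law of large numbers} to obtain, with probability~$1$, $\#\{E \in \EE_{\le H} : B_E\} = (L + o(1))\#\EE_{\le H}$, giving density~$L$.  For~(a) the constants $L$ are $1/2$, $1/2$, $0$; when $L = 0$ one uses a Ces\`aro argument if $\sum p_E$ diverges and the Borel--Cantelli lemma otherwise to conclude that the count of $B_E$ is $o(\#\EE_{\le H})$.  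For~(b)--(d), the conditional probabilities in Theorem~\ref{T:random variables for one E}\eqref{I:probability of Sha'}--\eqref{I:probability of global Sha'}, multiplied by $\Prob(\rk'_E = r) = 1/2 - o(1)$, yield unconditional probabilities $p_E$, and one divides the resulting densities by the density of $\{\rk'_E = r\}$ from~(a) to read off the claimed conditional densities.  A countable intersection over the possible isomorphism types~$G$ (or the countably many $\calG$ appearing) still has probability~$1$.

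Part~(e) requires additional care because $\Sha'_{0,E}$ is not Bernoulli.  Set $f(H) \colonequals X(H)^{\nnn(H)}$.  The upper bound is deterministic: Theorem~\ref{T:random variables for one E}\eqref{I:upper bound on Sha'} and the monotonicity of~$f$ give $\Sha'_{0,E} \le f(H_E)^{1+o(1)} \le f(H)^{1+o(1)}$ for every $E \in \EE_{\le H}$, hence $\Average_{E \in \EE_{\le H}} \Sha'_{0,E} \le f(H)^{1+o(1)}$.  For the lower bound, apply Lemma~\ref{L:law of large numbers} to the events $B_E \colonequals \{\Sha'_{0,E} \ge f(H_E)^{1 - o(1)}\}$, which have $p_E \ge 1/3$ by Theorem~\ref{T:random variables for one E}\eqref{I:lower bound on Sha'}.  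Fix $\alpha \in (0,\, 3^{-6/5})$ (for example $\alpha = 1/5$), so that $1/3 - \alpha^{5/6} > 0$; subtracting the trivial bound $\#\{E \in \EE_{\le \alpha H} : B_E\} \le \#\EE_{\le \alpha H} \sim \kappa \alpha^{5/6} H^{5/6}$ from the LLN estimate $\#\{E \in \EE_{\le H} : B_E\} \ge (1/3 - o(1))\kappa H^{5/6}$ leaves $\gg H^{5/6}$ curves $E$ with $H_E \in (\alpha H, H]$ and $B_E$, with probability~$1$.  Iterating the hypothesis $f(2H) \le f(H)^{1+o(1)}$ the constant number $\lceil \log_2(1/\alpha) \rceil$ of times yields $f(H) \le f(\alpha H)^{1+o(1)}$, so $f(\alpha H)^{1-o(1)} \ge f(H)^{1-o(1)}$, and each surviving curve contributes at least $f(H)^{1-o(1)}$ to $\sum_{E \in \EE_{\le H}} \Sha'_{0,E}$; dividing by $\#\EE_{\le H}$ completes the lower bound.

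The hard part will be the calibration in~(e): the lower-tail probability~$1/3$ is not close to~$1$, so one cannot simply conclude that almost every curve has large~$\Sha'$.  The dyadic band $(\alpha H, H]$ must be chosen small enough that the LLN surplus of large-$\Sha'$ curves strictly dominates the trivial contribution $\alpha^{5/6}\#\EE_{\le H}$ from curves of height at most $\alpha H$, while simultaneously the growth hypothesis $f(2H) \le f(H)^{1+o(1)}$ must survive a bounded number of iterations to replace $f(\alpha H)$ by $f(H)$ at no more than subpolynomial cost.
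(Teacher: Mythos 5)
Your proposal is correct and takes essentially the same route as the paper's: apply the law of large numbers (Lemma~\ref{L:law of large numbers}) to the independent per-curve Bernoulli events whose probabilities are supplied by Theorem~\ref{T:random variables for one E}, then convert counts into densities via~\eqref{eqn:height}.  The one place your execution diverges is part~(e): you bound the contribution of $\EE_{\le \alpha H}$ by its \emph{cardinality} rather than applying the LLN to it as well, which forces the constraint $\alpha^{5/6} < 1/3$ and the auxiliary choice $\alpha = 1/5$; the paper simply takes $\alpha = 1/2$, since the LLN applied to \emph{both} cumulative counts $\#\{E \in \EE_{\le H} : B_E\}$ and $\#\{E \in \EE_{\le H/2} : B_E\}$ gives a two-sided asymptotic for the band count with error $o(H^{5/6})$, so the band $(H/2, H]$ works without any constraint on the ratio.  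Both variants are valid, and your closing remark about the delicacy is really a remark about your own cruder subtraction rather than an intrinsic obstacle.  One further small stylistic difference: for~(d)(1) the paper applies the LLN to the complementary event $\{\rk'_E = 0 \text{ and } \Sha'_E \not\isom G\}$, whose probability is $1/2 - o(1)$ (so the LLN hypothesis is automatic), whereas you invoke a Ces\`aro/Borel--Cantelli dichotomy for the event itself; again both work.
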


\begin{proof}\hfill
For $E \in \EE$, let $B_E$ be the event $\rk'_E=r$,
and let $C_E$ be the event that $\rk'_E=r$ and $\Sha'_E[p^{\infty}] \isom G$.
\begin{enumerate}[\upshape (a)]
\item
Apply Lemma~\ref{L:law of large numbers} to $(B_E)$,
and use Theorem~\ref{T:random variables for one E}\eqref{I:rank 0 and 1}.
\item
Apply Lemma~\ref{L:law of large numbers} to $(B_E)$ and $(C_E)$,
and use Theorem~\ref{T:random variables for one E}(\ref{I:rank 0 and 1},\ref{I:probability of Sha'})
to compute the denominator and numerator in the definition of $\ProbE$.
\item 
Again apply Lemma~\ref{L:law of large numbers},
and use 
Theorem~\ref{T:random variables for one E}\eqref{I:global Sha' for rank 1}.

\item 
\begin{enumerate}[\upshape (1)]
 \item Apply Lemma~\ref{L:law of large numbers} 
 to the event $\rk'_E=0$ and $\Sha'_E \not\isom G$, and use 
 Theorem~\ref{T:random variables for one E}\eqref{I:probability of global Sha'}(1).
 \item 
 Apply Lemma~\ref{L:law of large numbers} 
 to the event $\rk'_E=0$ and $\Sha'_E \in \calG$, and use 
 Theorem~\ref{T:random variables for one E}\eqref{I:probability of global Sha'}(2).
\end{enumerate}

\item 
By Theorem~\ref{T:random variables for one E}\eqref{I:size of Sha'}(1), 
\[
	\Average_{E \in \EE_{\le H}} \Sha'_{0,E} 
	\le \max_{E \in \EE_{\le H}} \Sha'_{0,E} 
	\le (X(H)^{\nnn(H)})^{1 + o(1)}
\]
as $H \to \infty$.
By Theorem~\ref{T:random variables for one E}\eqref{I:size of Sha'}(2) 
and the law of large numbers, 
with probability~$1$,
as $H \to \infty$,
at least $1/4$ of the elliptic curves with height in $(H/2,H]$
satisfy $\Sha'_{0,E} \ge (X(\height E)^{\nnn(\height E)})^{1 - o(1)}$.
We have $(X(\height E)^{\nnn(\height E)})^{1 - o(1)}=(X(H)^{\nnn(H)})^{1-o(1)}$
by the growth hypothesis on $f(H)$.
Since a positive fraction of the elliptic curves in $\EE_{\le H}$
have height in $(H/2,H]$,
this implies 
$\Average_{E \in \EE_{\le H}} \Sha'_{0,E} \ge (X(H)^{\nnn(H)})^{1-o(1)}$.\qedhere
\end{enumerate}
\end{proof}

\subsection{Consequences for coranks of random matrices}

Comparing 
Theorem~\ref{T:bound on Sha_0}\eqref{I:average of Sha_0}
and Corollary~\ref{C:random variables for many E}\eqref{I:average Sha'}
suggests choosing $X(H)$ and $\nnn(H)$ so that
\begin{equation}
\label{E:calibration}
	X(H)^{\nnn(H)} = H^{1/12+o(1)}
\end{equation}
as $H \to \infty$.

\begin{remark}\label{R:matchupper}
Alternatively, matching the conditional upper bound of 
Theorem~\ref{T:bound on Sha_0}\eqref{I:Sha_0}
with an upper bound for $\det A$ 
would have also suggested~\eqref{E:calibration}.
\end{remark}

We now prove a theorem about the asymptotic aggregate behavior of the $\rk'_E$.

\begin{theorem}
\label{T:rank 21}
If $\nnn(H)$ grows sufficiently slowly relative to $H$,
and $X(H)^{\nnn(H)} = H^{1/12+o(1)}$,
then the following hold with probability~$1$:
\begin{enumerate}[\upshape (a)]
\item \label{I:21 theorem}
All but finitely many $E \in \EE$ satisfy $\rk'_E \le 21$.
\item \label{I:1 to 20}
For $1 \le r \le 20$, 
we have $\#\{ E \in \EE_{\le H} : \rk'_E \ge r \} = H^{(21-r)/24+o(1)}$.
\item \label{I:21}
We have $\#\{ E \in \EE_{\le H} : \rk'_E \ge 21 \} \le H^{o(1)}$.
\end{enumerate}
\end{theorem}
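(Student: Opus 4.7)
The plan is to pass from the one-curve estimate of Theorem~\ref{T:random variables for one E}\eqref{I:rank at least r} to an aggregate statement over $\EE$ via elementary counting, the Borel--Cantelli lemma, and Lemma~\ref{L:law of large numbers}. Substituting the calibration $X(H)^{\nnn(H)} = H^{1/12+o(1)}$ into Theorem~\ref{T:random variables for one E}\eqref{I:rank at least r} immediately yields, for each fixed $r \ge 1$,
\[
    \Prob(\rk'_E \ge r) = (\height E)^{-(r-1)/24 + o(1)}
\]
as $\height E \to \infty$.

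The core counting input is a dyadic decomposition. For $E \in \EE_{\le H}$ with $\height E \in (H 2^{-j-1}, H 2^{-j}]$, the height count $\#\EE_{\le H} \asymp H^{5/6}$ from~\eqref{eqn:height} puts $\asymp H^{5/6} 2^{-5j/6}$ curves in the $j$-th band, each contributing probability $\asymp H^{-(r-1)/24+o(1)} \cdot 2^{j(r-1)/24+o(j)}$; combining (and using $5/6 = 20/24$) gives
\[
    \sum_{E \in \EE_{\le H}} \Prob(\rk'_E \ge r)
    = H^{(21-r)/24 + o(1)} \cdot \sum_{0 \le j \le \log_2 H} 2^{j(r-21)/24 + o(j)}.
\]
For $1 \le r \le 20$ the geometric series in $j$ converges, giving $H^{(21-r)/24+o(1)}$; for $r = 21$ the series is bounded by $H^{o(1)}$, giving total $H^{o(1)}$. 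Replacing $\EE_{\le H}$ by all of $\EE$ and decomposing by $\height E \asymp 2^k$, the analogous sum for $r = 22$ becomes $\sum_{k \ge 0} 2^{-k/24 + o(k)} < \infty$.

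Parts~(a) and~(b) follow quickly. For~(a), Borel--Cantelli applied to the independent events $\{\rk'_E \ge 22\}$ shows that with probability~$1$ only finitely many $E \in \EE$ satisfy $\rk'_E \ge 22$, i.e.\ cofinitely many satisfy $\rk'_E \le 21$. For~(b), enumerate $\EE = \{E_1, E_2, \ldots\}$ in order of non-decreasing height and, for fixed $r \in \{1, \ldots, 20\}$, apply Lemma~\ref{L:law of large numbers} to $B_i \colonequals \{\rk'_{E_i} \ge r\}$; the partial sum along $\EE_{\le H}$ is $H^{(21-r)/24+o(1)} \to \infty$, so almost surely
\[
    \#\{E \in \EE_{\le H} : \rk'_E \ge r\}
    = (1+o(1)) \sum_{E \in \EE_{\le H}} \Prob(\rk'_E \ge r)
    = H^{(21-r)/24 + o(1)}.
\]

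Part~(c) is the main obstacle, because when $r = 21$ the expected count is only $H^{o(1)}$ and Lemma~\ref{L:law of large numbers} is inapplicable. The plan is to combine Markov's inequality with Borel--Cantelli along the sparse sequence $H = 2^k$: for any fixed $\epsilon > 0$,
\[
    \Prob\bigl(\#\{E \in \EE_{\le 2^k} : \rk'_E \ge 21\} \ge 2^{k\epsilon}\bigr)
    \le 2^{k(o(1) - \epsilon)},
\]
which is summable in $k$, so Borel--Cantelli bounds the count at $H = 2^k$ by $2^{k\epsilon}$ for all large $k$; monotonicity of the count in $H$ then extends the bound to all large $H$ up to a factor of $2^\epsilon$. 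Intersecting the resulting probability-$1$ events over $\epsilon = 1/j$, $j \in \Z_{\ge 1}$, produces a single probability-$1$ event on which the count is $H^{o(1)}$, finishing~(c). The main subtlety throughout is that the $o(1)$'s must be made uniform across the dyadic parameters $j$ and $k$ before they are summed, but this is routine bookkeeping.
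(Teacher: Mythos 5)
Your argument is correct, and parts (a), (b), and the dyadic computation of $\sum_{E \in \EE_{\le H}} \Prob(\rk'_E \ge r)$ coincide with the paper's: Theorem~\ref{T:random variables for one E}\eqref{I:rank at least r} plus the calibration gives $p_{E,r} \colonequals \Prob(\rk'_E \ge r) = (\height E)^{-(r-1)/24 + o(1)}$, the dyadic sum against $\#\EE_{\le H} \asymp H^{20/24}$ yields $H^{(21-r)/24 + o(1)}$ for $1 \le r \le 21$ and $O(1)$ for $r > 21$, and then Borel--Cantelli (for $r=22$) gives (a), Lemma~\ref{L:law of large numbers} gives (b).

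Where you genuinely diverge from the paper is part (c). You assert that Lemma~\ref{L:law of large numbers} is ``inapplicable'' when $r=21$; that is not quite right. The obstruction is only that one does not know whether $\sum_{E \in \EE} p_{E,21}$ converges or diverges, and the paper resolves this with a clean dichotomy: if the sum converges, Borel--Cantelli gives $\#\{E : \rk'_E \ge 21\} < \infty$ outright; if it diverges, Lemma~\ref{L:law of large numbers} \emph{does} apply and gives $\#\{E \in \EE_{\le H} : \rk'_E \ge 21\} = (1+o(1)) \sum_{E \in \EE_{\le H}} p_{E,21} = H^{o(1)}$. Either way the conclusion holds. Your alternative --- Markov's inequality on the expected count at $H=2^k$, Borel--Cantelli along the sparse sequence, monotonicity in $H$, and intersection over $\epsilon = 1/j$ --- is also valid and has the mild advantage of sidestepping the case split, at the cost of a bit more bookkeeping (including the uniformity-in-$k$ you flag at the end). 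Both proofs of (c) work; the paper's is shorter because it reuses the same two tools already deployed in (a) and (b) rather than introducing a first-moment argument.
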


\begin{proof}
Fix $r \ge 1$.
For $E \in \EE$, let $p_{E,r} \colonequals \Prob(\rk'_E \ge r)$.
By Theorem~\ref{T:random variables for one E}\eqref{I:rank at least r},
if $E$ is of height~$H$, then 
\[
	p_{E,r} = (X(H)^{\nnn(H)})^{-(r-1)/2 + o(1)} = H^{-(r-1)/24 + o(1)}.
\]
It follows that 
\begin{align*}
	\sum_{E \in \EE_{\le H}} p_{E,r}
	&= \sum_{E \in \EE_{\le H}} (\height E)^{-(r-1)/24 + o(1)} \\
	&= \begin{cases}
		H^{(21-r)/24 + o(1)}, & \textup{ if $1 \le r \le 21$;} \\
		O(1), & \textup{ if $r>21$,} \\
	\end{cases}
\end{align*}
by summing over dyadic intervals,
using the estimate $\# \EE_{\le H} = (\kappa + o(1)) H^{20/24}$ 
from~\eqref{eqn:height}.

If $\sum_{E \in \EE} p_{E,r}$ converges, 
as happens for $r>21$ and possibly also for $r=21$,
then the Borel--Cantelli lemma  implies that 
$\{E \in \EE: \rk'_E \ge r\}$ is finite.
If $\sum_{E \in \EE} p_{E,r}$ diverges, 
as happens for $1 \le r \le 20$ and possibly also for $r=21$,
then Lemma~\ref{L:law of large numbers} yields
\[
	\#\{E \in \EE_{\le H}: \rk'_E \ge r\}
	= (1+o(1))\sum_{E \in \EE_{\le H}} p_{E,r} = H^{(21-r)/24 + o(1)}.\qedhere
\]
\end{proof}

\begin{remark}
The conclusion that $\rk'_E$ is uniformly bounded with probability~$1$ 
is robust.  For example, 
if the assumption $X(H)^{\nnn(H)} = H^{1/12+o(1)}$ in \eqref{E:calibration}
is replaced by $X(H)^{\nnn(H)} = H^{c+o(1)}$ 
for a different positive constant~$c$,
then the same conclusion follows, 
but the bound beyond which there are only finitely many $E$ 
might no longer be $21$.
Another example: taking our matrix coefficients in a sphere instead of a box 
(as we actually do in Section~\ref{S:counting matrices}) 
does not change Theorem~\ref{thm:EskinKatznelsonAlternating},
so it does not change Theorem~\ref{T:rank 21} either.
\end{remark}

\begin{remark}
Although it would have been nice to have specifications 
for $X(H)$ and $\nnn(H)$ individually,
the specification of $X(H)^{\nnn(H)}$ alone 
sufficed for Theorem~\ref{T:rank 21}.
\end{remark}

\section{Predictions for elliptic curves}
\label{S:consequences}

The results of Section~\ref{S:heuristics for ranks} 
are unconditional theorems about random matrices.  
We now conjecture that some statements 
about the statistics of $(\rk'_E,\Sha'_E)$ as $E$ varies
are true also for the \emph{actual} $(\rk E(\Q),\Sha(E))$.

\subsection{Theoretical evidence}
\label{S:theoretical evidence}

Several consequences of this heuristic are widely believed conjectures for elliptic curves.
\begin{enumerate}[(i)]
\item \label{I:minimalist conjecture}
Corollary~\ref{C:random variables for many E}\eqref{I:distribution of rk'_E} suggests the ``minimalist conjecture'' 
that ranks of elliptic curves are $0$ half the time and $1$ half the time
asymptotically,
as has been conjectured by others for quadratic twist families,
including Goldfeld \cite{Goldfeld1979}*{Conjecture~B} 
and Katz and Sarnak \cites{Katz-Sarnak1999a,Katz-Sarnak1999b}.
\item 
Corollary~\ref{C:random variables for many E}\eqref{I:distribution of Sha'} 
predicts the distribution of $\Sha(E)[p^\infty]$ 
for elliptic curves $E$ of rank $r$ in the cases $r=0,1$.  
This distribution agrees with the three distributions 
in Section~\ref{S:distribution of Sha}.  
More generally, for any finite set $S$ of primes,
our model predicts the joint distribution of $\Sha(E)[p^\infty]$ 
for $p \in S$ as $E$ varies among rank $r$ curves in the cases $r=0,1$, 
and these predictions agree with the conjectures in 
\cites{Delaunay2001,Delaunay2007,Delaunay-Jouhet2014a} 
and \cite{Bhargava-Kane-Lenstra-Poonen-Rains2015}*{Section~5.6}.
\item 
Corollary~\ref{C:random variables for many E}\eqref{I:global Sha' distribution for rank 1} predicts that $\Sha(E)$ for $E$ of rank~$1$
is distributed according to the conjectures in 
\cites{Delaunay2001,Delaunay2007,Delaunay-Jouhet2014a}.
\item \label{I:consequence rank 0 has large Sha}
Given $G \in \symplectic$, 
Corollary~\ref{C:random variables for many E}\eqref{I:distribution of global Sha'} predicts that 
the density of rank $0$ curves $E$ with $\Sha(E) \isom G$ is zero, 
in agreement with the conjectures discussed 
in Remark~\ref{R:large and small Sha}, and that the density of rank $0$ curves $E$ with
$\Sha(E)$  the square of a cyclic group is the density conjectured by Delaunay \cite{Delaunay2001}*{Example~E}.

\end{enumerate}

\subsection{Predictions for ranks}
\label{S:predictions for ranks}

Our heuristic predicts also that the three conclusions of 
Theorem~\ref{T:rank 21} hold 
if $\rk'_E$ is replaced by $\rk E(\Q)$.
Specifically, it predicts
\begin{enumerate}[\upshape (a)]
\item \label{I:le 21} 
All but finitely many $E \in \EE$ satisfy $\rk E(\Q) \le 21$.
\item \label{I:rank count} 
For $1 \le r \le 20$, we have
${\#\{E\in \EE_H \mid \rk E(\Q) \geq r\}}\stackrel{?}=H^{(21-r)/24+o(1)}$.
\item We have $\#\{ E \in \EE_{\le H} : \rk E(\Q) \ge 21 \} 
	\stackrel{?}\le H^{o(1)}$.
\end{enumerate}
In particular, \eqref{I:le 21} would imply that 
ranks of elliptic curves over $\Q$ are bounded.

The prediction~\eqref{I:rank count} for $r=1$ was discussed 
in~\eqref{I:minimalist conjecture} in Section~\ref{S:theoretical evidence}.
The prediction of \eqref{I:rank count} for $r=2$ 
is consistent with all of the previous conjectures 
in Section~\ref{S:density of rank 2}.
See Section~\ref{S:density of rank 3} for a comparison of 
the prediction of \eqref{I:rank count} for $r=3$ 
to the other conjectures for this asymptotic.  

There are many other predictions made by the model of Section~\ref{S:heuristics for ranks}, though in some cases we are prevented from giving them explicitly because we do not know the corresponding fact about counting alternating matrices.  We mention some of these in Section~\ref{S:further}.

\subsection{Other families of elliptic curves}
Instead of taking all elliptic curves over $\Q$,
one could restrict to other families of elliptic curves, 
such as a family of quadratic twists 
or a family with prescribed torsion subgroup, 
and prove analogues of Theorem~\ref{T:rank 21}.
The predictions given by such an analogue are summarized 
in the following two tables;
under ``rank bound'' is an integer such that our model predicts
that the family contains only finitely many elliptic curves 
of strictly higher rank.

First, we consider a family of twists.
In some cases, these predictions are stronger than the predictions coming from 
variants of Granville's heuristic~\cite{Watkins-et-al2014}*{Section~11}.

\begin{center}
\begin{tabular}{c|c|c|c}
& $\#$ curves of height $\le H$ & rank bound & Granville \\ \hline
quadratic twists of a fixed $E_0$ & $H^{1/6} = H^{4/24}$ & $5$ & $7$ \\
quartic twists of $y^2=x^3-x$ & $H^{1/3} = H^{8/24}$ & $9$ & $11$ \\
sextic twists of $y^2=x^3-1$ & $H^{1/2} = H^{12/24}$ & $13$ & $13$ \\
all elliptic curves & $H^{5/6} = H^{20/24}$ & $21$ & $21$ \\
\end{tabular}
\end{center}

Next, we consider a family with prescribed torsion.
Harron and Snowden \cite{HarronSnowden2017}
prove that for each finite abelian group $T$ that arises, 
$\#\{E \in \EE_{\le H} : E(\Q)_{\tors} \isom T\} \asymp H^{1/d}$ 
for some $d \in \Q_{>0}$ depending on $T$.
For such a family, 
our heuristic suggests that the expression
$b_T \colonequals \limsup\,\{\rk E(\Q) : E(\Q)_{\tors} \isom T \}$
is bounded above by $1+\lfloor{24/d}\rfloor$. 
On the other hand, explicit families provide 
lower bounds on $b_T$ \cite{Dujellatorsgeneric}.
These upper and lower bounds are given in 
the last two columns of the following table.
Remarkably, for each $T$, the bounds are close and the 
conjectured upper bound is at least as large as the proven lower bound.

\begin{center}
\begin{tabular}{c|c|c|c}
torsion subgroup & $\#$ curves of height $\le H$ & rank bound & known lower bound \\ \hline
--- & $H^{5/6}$ & $21$ & $19$ \\
$\Z/2\Z$ & $H^{1/2}$ & $13$ & $11$ \\
$\Z/3\Z$ & $H^{1/3}$ & $9$ & $7$ \\
$\Z/4\Z$ & $H^{1/4}$ & $7$ & $6$ \\
$\Z/5\Z$ & $H^{1/6}$ & $5$ & $4$ \\
$\Z/6\Z$ & $H^{1/6}$ & $5$ & $5$ \\
$\Z/7\Z$ & $H^{1/12}$ & $3$ & $2$ \\
$\Z/8\Z$ & $H^{1/12}$ & $3$ & $3$ \\
$\Z/9\Z$ & $H^{1/18}$ & $2$ & $1$ \\
$\Z/10\Z$ & $H^{1/18}$ & $2$ & $1$ \\
$\Z/12\Z$ & $H^{1/24}$ & $2$ & $1$ \\
$\Z/2\Z \times \Z/2\Z$ & $H^{1/3}$ & $9$ & $8$ \\
$\Z/2\Z \times \Z/4\Z$ & $H^{1/6}$ & $5$ & $5$ \\
$\Z/2\Z \times \Z/6\Z$ & $H^{1/12}$ & $3$ & $3$ \\
$\Z/2\Z \times \Z/8\Z$ & $H^{1/24}$ & $2$ & $1$ 
\end{tabular}
\end{center}

\section{Counting alternating matrices of prescribed rank}
\label{S:counting matrices}

In this section we prove Theorem~\ref{thm:EskinKatznelsonAlternating},
which was used in the proof of 
Theorem~\ref{T:random variables for one E}\eqref{I:rank at least r}.

\subsection{Statement and overview of proof}
\label{S:counting matrices statement}

\begin{theorem} \label{thm:EskinKatznelsonAlternating}
If  $1 \leq r \leq n$ and $n-r$ is even, then 
\[
	\#\{A \in \M_n(\Z)_{\alt,\leq X} : \rk(\ker A ) \ge r\}
	\asymp_n X^{n(n-r)/2}.
\]
\end{theorem}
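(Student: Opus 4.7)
\medskip

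\textbf{Proof plan.} Set $2s := n-r$, so the condition $\rk(\ker A)\ge r$ on an alternating matrix becomes $\rk A\le 2s$ (using the parity assumption $n\equiv r\pmod 2$, which forces alternating rank to land in $\{0,2,\ldots,2s\}$). My strategy follows the Eskin--Katznelson approach for singular integer matrices, adapted to the alternating setting via the factorization
\[
A = C\,\Omega\, C^T,
\]
where $C\in \M_{n\times 2s}(\Z)$ has columns forming a $\Z$-basis of $L := \operatorname{Im}(A)^{\operatorname{sat}}\subseteq \Z^n$, and $\Omega\in \M_{2s}(\Z)_{\alt}$ is the (nondegenerate) alternating form that $A$ induces on the quotient lattice $\Z^n/(\ker A)^{\operatorname{sat}}\cong \Z^{2s}$. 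Such a factorization exists and is unique up to the simultaneous substitution $(C,\Omega)\mapsto (CP, P^{-1}\Omega P^{-T})$ for $P\in \GL_{2s}(\Z)$, which will let me fix a canonical (Minkowski-reduced) representative in the upper bound.

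\textbf{Lower bound.} Fix the standard symplectic form $J_0=\bigl(\begin{smallmatrix}0 & I_s\\ -I_s & 0\end{smallmatrix}\bigr)$. For each $C\in \M_{n\times 2s}(\Z)$ with entries of absolute value $\le c_n X^{1/2}$ (for a suitable small $c_n>0$), form $A := CJ_0C^T$; this is alternating of rank $\le 2s$ with $|A_{ij}|\le X$. There are $\gg_n X^{sn}=X^{n(n-r)/2}$ such $C$. To show that these produce $\gg_n X^{sn}$ distinct $A$'s, I note that two such $C,C'$ yield the same $A$ only if $C'=CP$ with $P\in \Sp_{2s}(\Q)$, and that for $C$ in this range the integer symplectic matrices $P$ satisfying the size constraint on both $C$ and $CP$ form a set of bounded cardinality depending only on $n$. (Restricting to, say, $C$'s whose first $2s$ rows form a generic short basis kills the rational stabilizer as well.)

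\textbf{Upper bound.} Given $A$ of rank exactly $2s$ with $|A|\le X$, take $c_1,\ldots,c_{2s}$ to be a Minkowski-reduced basis of $L=\operatorname{Im}(A)^{\operatorname{sat}}$, with successive minima $\lambda_1\le\cdots\le\lambda_{2s}$, and let $C$ be the matrix with these columns; this determines $\Omega$ via $A=C\Omega C^T$. The key estimate is
\[
|\Omega_{k\ell}|\;\ll_n\; \frac{X}{\lambda_k\lambda_\ell},
\]
which I would deduce by applying the Moore--Penrose pseudo-inverse $C^+$ to both sides of $A=C\Omega C^T$ and using the standard fact (valid precisely because $C$ is Minkowski-reduced) that the $k$-th row of $C^+$ has norm $\asymp_n \lambda_k^{-1}$. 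Hence the number of admissible $\Omega$ for a fixed Minkowski-reduced $C$ is $\ll_n \prod_{k<\ell}\max\!\bigl(1,X/(\lambda_k\lambda_\ell)\bigr)$. Combining with the geometry-of-numbers estimate for the number of primitive rank-$2s$ sublattices of $\Z^n$ with successive minima $\asymp(\lambda_1,\ldots,\lambda_{2s})$, and integrating over $(\lambda_1,\ldots,\lambda_{2s})$ subject to $\lambda_{2s-1}\lambda_{2s}\ll X$, yields the upper bound $\ll_n X^{sn}$; a short check shows the contribution from ranks $<2s$ is of strictly lower order.

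\textbf{Main obstacle.} The central difficulty is the upper bound, specifically (a) establishing the pseudo-inverse estimate $\|C^+\|_{\text{row }k}\asymp_n \lambda_k^{-1}$ with constants uniform over all reduction cells (this is where the Minkowski reduction does the essential work, turning a merely algebraic factorization into a quantitative one), and (b) performing the multi-variable sum $\sum_{\lambda_1\le\cdots\le\lambda_{2s}}(\#\text{lattices})\cdot(\#\Omega)$ without losing logarithmic factors that would spoil the matching exponent $sn=n(n-r)/2$. The correct bookkeeping requires that the exponents of the $\lambda_k$ in (# lattices) and in $\prod X/(\lambda_k\lambda_\ell)$ cancel in exactly the right way — this is the alternating analogue of the arithmetic at the heart of Eskin--Katznelson, and verifying it is the main work.
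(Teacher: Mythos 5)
Your plan follows the same overall strategy as the paper's proof: stratify rank-$2s$ alternating matrices $A$ by the primitive rank-$2s$ sublattice $\bL\subset\Z^n$ supporting their rows and columns, reduce to counting alternating forms on $\bL$ of the right size, and invoke Schmidt's theorem (Theorem~\ref{T:Schmidt}) on the number of such $\bL$ of bounded determinant. Your factorization $A=C\Omega C^T$ with $C$ a basis matrix of $\bL$ and $\Omega\in\M_{2s}(\Z)_{\alt}$ is precisely the paper's identification of $\calA(\bL)$ with $\M_{2s}(\Z)_{\alt}$ via the basis $R_{ij}=\ell_i\tensor\ell_j-\ell_j\tensor\ell_i$ (Proposition~\ref{P: basis}, Lemma~\ref{L:bijection}); your pseudo-inverse estimate $|\Omega_{k\ell}|\ll_n X/(\lambda_k\lambda_\ell)$ is Lemma~\ref{L: triangle} combined with $|R_{ij}|\asymp_r|\ell_i||\ell_j|$ (Lemma~\ref{L: absbound}); and the multi-variable sum you single out as the crux is exactly what the paper does in Theorem~\ref{T:mainupperbound}, splitting into a main term (summed over dyadic ranges of $d(\bL)$ via Theorem~\ref{T:Schmidt}) and an error term $\calE_1$ deferred to \cite{Eskin-Katznelson1995}*{Proposition~7.1}. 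So the upper-bound plan is sound; what remains is to actually carry out that bookkeeping, which the paper does essentially by importing the corresponding computation from Eskin--Katznelson.

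The one step I would push back on is the lower bound's collision count. You assert that $CJ_0C^T=C'J_0C'^T$ forces $C'=CP$ with $P$ an \emph{integer} symplectic matrix; in general one gets only $P\in\operatorname{Sp}_{2s}(\Q)$, and even when $P$ is integral the fibers of $C\mapsto CJ_0C^T$ on the box $\{|C_{ij}|\le c_nX^{1/2}\}$ can be unbounded. For $n=2s=2$, take $C=\bigl(\begin{smallmatrix}N&0\\0&1\end{smallmatrix}\bigr)$ with $N\asymp X^{1/2}$ and $P=\bigl(\begin{smallmatrix}1&0\\k&1\end{smallmatrix}\bigr)$ for $0\le k\le N$: all the $CP$ lie in the box and produce the same $A=\bigl(\begin{smallmatrix}0&N\\-N&0\end{smallmatrix}\bigr)$, giving a fiber of size $\gg X^{1/2}$. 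Your proposed fix, requiring the first $2s$ rows of $C$ to be a generic short basis, does not address this, since the obstruction lives in the shape of the column lattice of $C$, not in any particular $2s\times 2s$ minor. The paper sidesteps the issue by counting \emph{lattices} rather than matrices $C$: it counts $c$-regular primitive $\bL$ with $d(\bL)\le\epsilon T^{r/2}$ via \cite{Eskin-Katznelson1995}*{Proposition~2.6} and produces a single $A=\sum_s R_{2s-1,2s}$ per lattice (this is your $CJ_0C^T$ up to column reordering); distinct lattices give distinct $A$'s because $\bL$ is recovered from $A$ as its saturated image. You could salvage your version by restricting to $C$ whose columns all have length $\asymp X^{1/2}$ and span a $c$-regular lattice, but the lattice-counting formulation already packages those conditions in the form needed.
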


In fact, we will prove the same asymptotic for the count with $\ge r$
replaced by $=r$; then Theorem~\ref{thm:EskinKatznelsonAlternating}
follows by summing.
Also, the $\ell^\infty$-norm of a matrix $A=(a_{ij})$ 
is bounded above and below by
the $\ell^2$-norm times constants depending on $n$,
so we may instead use the $\ell^2$-norm, 
which is defined by $|A|^2=\sum_{i,j} a_{ij}^2$.
Finally, we may use $\rk(\ker A) = n - \rank A$, and rename $r$ as $n-r$.
This leads us to define 
\[
	N_{n,r}(T) \colonequals 
	\#\{A \in \M_n(\Z)_{\alt}:  \rank A = r \textup{ and } |A|<T\}.
\]
Now the result to be proved is as follows.

\begin{theorem}
\label{T:counting matrices by l^2}
If $0 \le r \le n-1$ and $r$ is even, then $N_{n,r}(T) \asymp_n T^{nr/2}$.
\end{theorem}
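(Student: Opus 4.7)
The plan is to adapt the lattice-counting strategy of Eskin--Katznelson to the alternating setting to prove $N_{n,r}(T) \asymp_n T^{nr/2}$. The key geometric observation is that every real alternating matrix of rank exactly $r$ factors as $A = U J U^T$, where $J$ is the standard $r \times r$ symplectic form and $U \in \M_{n \times r}(\R)$ has full column rank; the norm satisfies $|A|_2 \asymp |U|_2^2$ up to a factor depending on the condition number of $U$. Over $\Z$, an alternating matrix of rank $r$ determines the primitive sublattice $L := (\ker A)^\perp \cap \Z^n \subset \Z^n$ of rank $r$ (the primitive hull of $\im A$), and $A$ corresponds on $L$ to a non-degenerate integer-valued alternating form in $\wedge^2 L$, subject to divisibility conditions relating to the index $[\Z^n : L \oplus L^\perp]$.

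For the lower bound $N_{n,r}(T) \gg_n T^{nr/2}$, I would fix $L = \Z e_1 \oplus \cdots \oplus \Z e_r$ and count $A = U J U^T$ as $U$ ranges over $\M_{n \times r}(\Z)$ with entries in $[-c\sqrt{T}, c\sqrt{T}]$ for a constant $c = c(n)$. The number of such $U$ is $\asymp T^{nr/2}$, and for a generic fraction $U$ has rank $r$ and the resulting $A$ satisfies $|A|_2 < T$. Different $U$'s can yield the same $A$, but the fibers of $U \mapsto U J U^T$ are $\operatorname{Sp}_r(\Z)$-orbits, which are discrete; restricted to the box of $U$'s of norm $\le c\sqrt{T}$, each orbit has cardinality $O_n(1)$, giving the lower bound.

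For the upper bound $N_{n,r}(T) \ll_n T^{nr/2}$, I would group alternating matrices by their primitive image lattice $L$. For $L$ with successive minima $\lambda_1(L) \le \cdots \le \lambda_r(L)$ and a Minkowski-reduced basis, the alternating form coordinates $b_{ij}$ ($i<j$) corresponding to $A$ must satisfy $|b_{ij}| \ll_n T/(\lambda_i \lambda_j)$ in order that $|A|_2 < T$; this produces $\ll T^{r(r-1)/2}/\prod_i \lambda_i^{r-1}$ admissible forms. Using $\prod_i \lambda_i \asymp_n \operatorname{cov}(L)$ (Minkowski's second theorem), the standard estimate that the number of primitive rank-$r$ sublattices of $\Z^n$ of covolume in $[V,2V]$ is $\ll V^n$, and the non-degeneracy constraint $\operatorname{cov}(L) \ll T^{r/2}$ coming from the requirement that $\Pf(B) \ne 0$, the dyadic summation over $L$ gives $T^{r(r-1)/2} \sum_V V^{n-r+1} \ll T^{r(r-1)/2} \cdot T^{r(n-r+1)/2} = T^{nr/2}$.

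The main obstacle I anticipate is the bookkeeping in the upper-bound summation: the bound on $|b_{ij}|$ depends on the individual successive minima $\lambda_i$, not merely on the covolume of $L$, so one must stratify the sum over $L$ by the full shape of $L$ and handle the degenerate lattices (with $\lambda_{r-1}\lambda_r > T$ forcing some $b_{ij} = 0$) separately. A secondary subtlety is the integrality condition translating alternating forms on $L$ into matrices integer-valued on all of $\Z^n$; the divisibility factors indexed by $[\Z^n : L \oplus L^\perp]$ must be shown not to accumulate in the final estimate. Both points are standard in Eskin--Katznelson-style arguments but require careful execution.
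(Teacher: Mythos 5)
Your upper bound is essentially the paper's argument: stratify rank-$r$ alternating matrices by the primitive lattice $\bL$ that is the saturation of the image, observe that in a Minkowski-reduced basis of $\bL$ the form coefficients satisfy $|b_{ij}|\ll T/(\lambda_i\lambda_j)$ (this uses the almost-orthogonality of the basis $R_{ij}=\ell_i\ell_j^T-\ell_j\ell_i^T$ of $\calA(\bL)$), deduce $d(\bL)\ll T^{r/2}$ from nondegeneracy, and sum dyadically using Schmidt's $P_{n,r}(t)\asymp_n t^n$. Your main-term arithmetic is correct. The "bookkeeping" you flag is genuine but standard; the paper handles the factors $\max(1,T/(\lambda_i\lambda_j))$ by adapting Eskin--Katznelson's Proposition~7.1, and the lattices with $\lambda_{r-1}\lambda_r>T$ are excluded precisely by the $d(\bL)\ll T^{r/2}$ constraint. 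One side note: your worry about divisibility conditions attached to $[\Z^n:L\oplus L^\perp]$ is unfounded; for primitive $\bL$ the set $\calA(\bL)$ of integer alternating matrices with rows in $\bL$ is the free $\Z$-module on the $R_{ij}$, with no extra congruences.

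Your lower bound is where the two arguments diverge, and there is a real gap. The paper counts $c$-regular primitive lattices $\bL$ with $d(\bL)\le\epsilon T^{r/2}$ (Eskin--Katznelson Proposition~2.6 gives $\asymp_n T^{nr/2}$ of them) and, for each, exhibits the single rank-$r$ matrix $A=\sum_s R_{2s-1,2s}\in\calA(\bL)$ with $|A|\le T$; distinct $\bL$ yield distinct $A$, so no fiber analysis is needed. Your route parametrizes $A=UJU^T$ with $U$ integer in a box. The claim that each fiber restricted to the box has $O_n(1)$ elements is false, and the fibers are not $\operatorname{Sp}_r(\Z)$-orbits but $\operatorname{Sp}_r(\Q)$-orbits intersected with the integrality constraint. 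Concretely, already for $n=3$, $r=2$ the map is $(u,v)\mapsto u\wedge v$, and the number of $(u,v)$ in a box of side $c\sqrt{T}$ with $u\wedge v=w$ grows like $\sigma_1(\gcd w)$, which can be as large as a power of $T$; even the restriction to well-conditioned $U$ does not prevent this, since $U=\operatorname{diag}(d,T'/d,\dots)$ with $d\mid T'$ and $d\asymp\sqrt{T'}$ is well-conditioned yet contributes many elements to the fiber over $T'J$. The lower bound is probably still recoverable via a second-moment argument (the $A$ with large fibers are sparse enough that $\sum_A|\mathrm{fiber}(A)|^2\ll T^{nr/2}$), but that is substantial additional work not indicated in your write-up. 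The paper's one-matrix-per-$c$-regular-lattice construction avoids the issue entirely.
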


Theorem~\ref{T:counting matrices by l^2} 
is the analogue for alternating matrices 
of the Eskin--Katznelson theorem 
counting integral \emph{symmetric} matrices of specified rank 
\cite{Eskin-Katznelson1995}*{Theorem~1.2}.
Our proof of Theorem~\ref{T:counting matrices by l^2} 
follows the Eskin--Katznelson proof closely;
indeed, the differences are almost entirely numerical
(though we have incorporated a few simplifications,
notably in the proofs of Lemma~\ref{L: changeofbasis}
and Theorem~\ref{T:mainlowerbound}).
The strategy is to count the matrices
(the rank~$r$ matrices $A \in \M_n(\Z)_{\alt}$ with $|A|<T$)
by grouping them 
according to which primitive rank~$r$ sublattice $\bL \subseteq \Z^n$
contains the rows of $A$.

To obtain an upper bound on $N_{n,r}(T)$,
first, we bound the determinant of the lattices $\bL$ that arise 
in the previous sentence 
(Corollary~\ref{C: suffcondition}).
Second, a theorem of Schmidt (Theorem~\ref{T:Schmidt}) 
bounds the number of $\bL$ of bounded determinant.
Third, the matrices $A$ associated to \emph{one} such $\bL$
correspond to certain points in the lattice $\calA(\bL)$ of matrices
whose rows are contained in $\bL$,
so they can be counted by another result
of Schmidt on lattice points in a growing ball (Lemma~\ref{L:countinlattice}).

To obtain a matching lower bound on $N_{n,r}(T)$, 
it will turn out that 
it suffices to count lattices having a basis
consisting of ``almost orthogonal'' vectors of roughly comparable length 
(Theorem~\ref{T:mainlowerbound}). 

\subsection{Lattices}

Fix $n \ge 0$, 
and let $\parenthesispairing$ and $\ltwonorm$ denote the standard 
inner product and $\ell^2$-norm on $\R^n$.
By a \defi{lattice} in $\R^n$, 
we mean a discrete subgroup $\bL \subset \R^n$;
its rank $r$ might be less than $n$.
By convention, each $\Z$-basis $\{\ell_1,\ldots,\ell_r\}$ of $\bL$
is ordered so that $|\ell_1| \le \cdots \le |\ell_r|$.
A lattice $\bL \subset \Z^n$ is \defi{primitive} 
if it is not properly contained in any other sublattice of $\Z^n$ 
of the same rank.
The \defi{determinant} $d(\bL) \in \R_{>0}$ of $\bL$ 
is the $r$-dimensional volume
(with respect to the metric induced by $\ltwonorm$)
of the parallelepiped spanned by any $\Z$-basis $\{\ell_1,\ldots,\ell_r\}$ 
of $\bL$; then $d(\bL)^2 = \det (\ell_i,\ell_j)_{1 \le i,j \le r}$.
Among all bases $\{\ell_1,\ldots,\ell_r\}$ for $\Lambda$,
any one that minimizes the product $|\ell_1|\cdots|\ell_r|$ is called 
a \defi{reduced basis}.
(This is equivalent to the usual definition of Minkowski reduced basis.)

\begin{theorem}[Minkowski]
\label{T:Minkowski}
If $\{\ell_1, \ldots, \ell_r\}$ is a reduced basis for $\bL$, then
$d(\bL) \asymp_r |\ell_1| \cdots |\ell_r|$.
\end{theorem}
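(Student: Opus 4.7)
The upper bound $d(\bL) \le |\ell_1| \cdots |\ell_r|$ holds for \emph{every} basis of $\bL$, by applying Hadamard's inequality to the positive-semidefinite Gram matrix:
\[
d(\bL)^2 = \det\bigl((\ell_i, \ell_j)\bigr)_{1 \le i,j \le r} \le \prod_{i=1}^r (\ell_i,\ell_i) = \prod_{i=1}^r |\ell_i|^2.
\]
So only the reverse inequality (up to a constant depending on $r$) requires work.

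For that, recall the successive minima $\lambda_1 \le \cdots \le \lambda_r$ of $\bL$, defined as the infima of those $\lambda > 0$ for which $\bL$ contains $i$ linearly independent vectors of norm at most $\lambda$. Minkowski's second theorem on successive minima yields $\lambda_1 \cdots \lambda_r \asymp_r d(\bL)$, so it suffices to prove $|\ell_1| \cdots |\ell_r| \ll_r \lambda_1 \cdots \lambda_r$ for any reduced basis.

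To this end, fix linearly independent lattice vectors $v_1, \ldots, v_r \in \bL$ with $|v_i| = \lambda_i$. These generally do not form a $\Z$-basis of $\bL$, and the key step is a classical construction (essentially due to Mahler) that replaces them by an \emph{actual} basis $\{\ell_1^*, \ldots, \ell_r^*\}$ of $\bL$ with $|\ell_i^*| \le C_r \lambda_i$ for a constant $C_r$ depending only on $r$. I would proceed inductively: given a basis of $\bL \cap \mathrm{span}(v_1, \ldots, v_{i-1})$, decompose $v_i$ into its component in that span plus an orthogonal component of norm at most $|v_i|=\lambda_i$, then subtract an integer combination of the previously-chosen basis vectors to put the in-span component into a fundamental parallelepiped, and take the resulting vector as $\ell_i^*$. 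Its norm is then controlled by $\lambda_i$ together with the norms of $\ell_1^*,\ldots,\ell_{i-1}^*$ (each already bounded by $C_r$ times the corresponding $\lambda_j \le \lambda_i$), and a standard determinant argument verifies that $\ell_1^*,\ldots,\ell_i^*$ still generate the primitive sublattice $\bL \cap \mathrm{span}(v_1,\ldots,v_i)$. Since a reduced basis minimizes $\prod |\ell_i|$ over all bases of $\bL$, we conclude
\[
\prod_{i=1}^r |\ell_i| \le \prod_{i=1}^r |\ell_i^*| \le C_r^r \prod_{i=1}^r \lambda_i \ll_r d(\bL),
\]
as desired.

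The main obstacle is precisely this basis-construction step: already in rank $\ge 5$ the successive-minima vectors $v_i$ can fail to generate $\bL$, so one must argue carefully to replace them by a genuine basis without inflating norms by more than a constant factor depending on $r$. The two other ingredients (Hadamard and Minkowski's second theorem) are entirely standard.
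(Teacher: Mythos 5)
The paper does not prove Theorem~\ref{T:Minkowski} itself---it is stated as a classical theorem of Minkowski and cited without argument---so there is no proof in the paper to compare against. Your strategy (Hadamard for one direction, and for the other direction Minkowski's second theorem on successive minima together with a Mahler-type construction of a basis whose $i$th vector has norm $\ll_r \lambda_i$, then the minimality property of a reduced basis) is a standard and correct route to this fact.

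However, the basis-construction step is described incorrectly. You take $\ell_i^*$ to be $v_i$ minus an integer combination of $\ell_1^*, \ldots, \ell_{i-1}^*$, and claim that $\ell_1^*, \ldots, \ell_i^*$ then generate $\Lambda_i := \bL \cap \mathrm{span}(v_1,\ldots,v_i)$. That is false in general: since you only modify $v_i$ by elements of $\Lambda_{i-1}$, the image of $\ell_i^*$ in $\Lambda_i/\Lambda_{i-1} \cong \Z$ equals the image of $v_i$, which may be a non-unit. The standard rank-5 example makes this concrete: take $\bL = \Z^5 + \Z\cdot\tfrac12(e_1+\cdots+e_5)$; all successive minima equal $1$, attained by $e_1,\ldots,e_5$, yet these span only the index-$2$ sublattice $\Z^5$, and the image of $e_5$ in $\Lambda_5/\Lambda_4$ is twice a generator. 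So no amount of subtracting elements of $\Lambda_4$ from $e_5$ produces the missing fifth basis vector. You do flag exactly this phenomenon (``in rank $\ge 5$ the successive-minima vectors can fail to generate $\bL$''), but the construction you then give does not actually get around it. The fix is small: instead of starting from $v_i$, take any lattice vector $w_i$ whose class generates $\Lambda_i/\Lambda_{i-1}$. Its orthogonal component (the projection onto $\mathrm{span}(v_1,\ldots,v_{i-1})^\perp$) divides $\pi(v_i)$ in the relevant rank-one lattice and therefore has norm at most $|\pi(v_i)| \le |v_i| = \lambda_i$; then reduce the in-span component modulo $\Lambda_{i-1}$ as you describe. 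With that substitution the induction closes and the rest of your argument is fine.
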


Theorem~\ref{T:Minkowski} can be interpreted as saying 
that a reduced basis is ``almost orthogonal''.
The following lemma is essentially a reformulation of Minkowski's theorem 
\cite{Eskin-Katznelson1995}*{Lemma~2.1, comment after Lemma~2.2}.

\begin{lemma}
\label{L: triangle}
Fix $r$ and a positive constant $C$.
Let $\bL$ be a lattice with basis $\{u_1,\ldots,u_r\}$
satisfying $d(\bL) \geq C |u_1|\cdots|u_r|$.
Then for any $a_1,\ldots,a_r \in \R$, 
\[
	\left|\sum_{j=1}^r a_j u_j\right|^2 
	\; \gg_{r,C} \; 
	\sum_{j=1}^r a_j^2 |u_j|^2.
\]
\end{lemma}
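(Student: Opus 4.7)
The plan is to translate the inequality into a lower bound on the smallest eigenvalue of the Gram matrix of $\{u_1,\dots,u_r\}$, suitably normalized by the lengths $|u_j|$.

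First I would form the Gram matrix $G \colonequals ((u_i,u_j))_{1\le i,j\le r}$, so that
\[
	\left|\sum_{j=1}^r a_j u_j\right|^2 = a^T G a
\]
for $a = (a_1,\dots,a_r)^T \in \R^r$. Writing $D$ for the diagonal matrix with $j$th diagonal entry $|u_j|^2$, the conclusion of the lemma is equivalent to the matrix inequality $G \succeq c\, D$ for some constant $c = c(r,C) > 0$, or equivalently, setting $M \colonequals D^{-1/2} G D^{-1/2}$, to $M \succeq c\, I_r$.

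Next I would read off the structure of $M$. Its diagonal entries are all $1$; by Cauchy--Schwarz its off-diagonal entries $(u_i,u_j)/(|u_i|\,|u_j|)$ lie in $[-1,1]$; and $M$ is positive semidefinite since $G$ is. Hence the sum of the eigenvalues of $M$ equals $r$, so each eigenvalue lies in $[0,r]$. Moreover, by the hypothesis of the lemma,
\[
	\det M = \frac{\det G}{|u_1|^2 \cdots |u_r|^2} = \frac{d(\bL)^2}{|u_1|^2 \cdots |u_r|^2} \ge C^2.
\]
Denoting the eigenvalues of $M$ by $0 < \lambda_1 \le \cdots \le \lambda_r$ and using $\lambda_i \le r$ for $i \ge 2$, I would conclude
\[
	\lambda_1 = \frac{\lambda_1 \cdots \lambda_r}{\lambda_2 \cdots \lambda_r} \ge \frac{C^2}{r^{r-1}},
\]
so $M \succeq (C^2/r^{r-1})\, I_r$, and the lemma follows with $c(r,C) = C^2/r^{r-1}$.

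There is no real obstacle in this argument; the only insight required is to recognize that the hypothesis $d(\bL) \ge C|u_1|\cdots|u_r|$ is exactly a lower bound on $\det M$, after which the eigenvalue bound follows from the elementary inequality $\lambda_1 \ge (\det M)/\lambda_r^{r-1}$ for a positive-definite symmetric matrix whose trace is $r$. In this sense the lemma is a direct quantitative consequence of the positive-definiteness of the Gram matrix together with the determinant hypothesis.
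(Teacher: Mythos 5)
Your argument is correct and self-contained. You reduce the claimed inequality to a lower bound on the smallest eigenvalue of the normalized Gram matrix $M = D^{-1/2}GD^{-1/2}$, then observe that $M$ is positive semidefinite with unit diagonal (hence trace $r$, hence all eigenvalues in $[0,r]$), while the hypothesis forces $\det M \ge C^2$. The chain $\lambda_1 = \det M/(\lambda_2\cdots\lambda_r) \ge C^2/r^{r-1}$ is valid since $\det M > 0$ guarantees positive definiteness, and it yields the explicit constant $c(r,C) = C^2/r^{r-1}$. I double-checked: $\det M = \det G/\prod_i |u_i|^2 = d(\bL)^2/\prod_i |u_i|^2$, which is indeed at least $C^2$ by hypothesis, and $\sum_j a_j^2|u_j|^2 = a^T D a$, so the reduction to $M \succeq cI$ is exactly right.

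The paper itself offers no proof of this lemma; it simply labels it a reformulation of Minkowski's theorem and points to Lemma~2.1 and the comment after Lemma~2.2 of Eskin--Katznelson. Your proof is arguably \emph{more} elementary than what the citation chain would give: it needs nothing about reduced bases or Minkowski's successive-minima theorem, only Cauchy--Schwarz, the trace, and the determinant of the Gram matrix. As a bonus it produces an explicit constant rather than an abstract one. The one small thing worth keeping in mind is that the constant $C^2/r^{r-1}$ degrades exponentially in $r$; for the purposes of the paper (where $r$ is fixed and only the dependence $\gg_{r,C}$ matters) this is irrelevant, but sharper constants are available through more careful arguments if ever needed.
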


The following lemma shows that different choices of reduced bases, 
or even bases within a constant factor of being reduced, 
have very similar lengths.

\begin{lemma}[Lemma~2.2 of \cite{Eskin-Katznelson1995}]
\label{L: uniformity}
Fix $r$ and a positive constant $C$.
Let $\bL$ be a rank~$r$ lattice 
with reduced basis $\{\ell_1, \ldots, \ell_r\}$.
If $\{u_1,\ldots,u_r\}$ is another basis of $\bL$,
and $d(\bL) \geq C|u_1| \cdots |u_r|$,
then $|u_j| \asymp_{r,C} |\ell_j|$ for all $j$.
\end{lemma}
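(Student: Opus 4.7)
The plan is to route through the successive minima $\lambda_1 \le \cdots \le \lambda_r$ of $\bL$. The key inputs are: (i) a reduced basis in the paper's sense (minimizing $\prod |\ell_i|$) is equivalent to a classical Minkowski reduced basis, and such bases satisfy $|\ell_j| \asymp_r \lambda_j$ for each $j$; and (ii) Minkowski's theorem (Theorem~\ref{T:Minkowski}) relating the product of basis lengths to $d(\bL)$. These together let us pin down the $|\ell_j|$ intrinsically in terms of $\bL$, so we can compare any two bases by comparing each to the $\lambda_j$.

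First I would establish that $\prod_i |u_i| \asymp_{r,C} \prod_i |\ell_i|$. The lower bound $\prod_i |u_i| \ge d(\bL)$ is automatic (Gram--Schmidt: $d(\bL)=\prod_i |u_i^*|$ with $|u_i^*| \le |u_i|$). Combined with the hypothesis $d(\bL) \ge C \prod_i |u_i|$, this gives $\prod_i |u_i| \asymp_C d(\bL)$, and Theorem~\ref{T:Minkowski} gives $d(\bL) \asymp_r \prod_i |\ell_i|$, completing this step.

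Next I would prove the one-sided bound $|\ell_j| \ll_r |u_j|$ for each $j$. Since $u_1, \ldots, u_j$ are $j$ linearly independent lattice vectors all of norm $\le |u_j|$ (by our convention that the $u_i$ are ordered by length), the definition of the $j$-th successive minimum gives $\lambda_j \le |u_j|$. On the other hand, $|\ell_j| \ll_r \lambda_j$ by the classical estimate for Minkowski reduced bases, so $|\ell_j| \ll_r |u_j|$.

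Finally, to obtain the reverse bound $|u_j| \ll_{r,C} |\ell_j|$, I would divide the product estimate by the one-sided bound applied to each $i \ne j$:
\[
|u_j| \;=\; \frac{\prod_{i=1}^r |u_i|}{\prod_{i \ne j} |u_i|} \;\ll_{r,C}\; \frac{\prod_{i=1}^r |\ell_i|}{\prod_{i \ne j} |\ell_i|} \;=\; |\ell_j|,
\]
since $|u_i| \gg_r |\ell_i|$ for the $r-1$ indices $i \ne j$. Combining the two directions gives $|u_j| \asymp_{r,C} |\ell_j|$. The only non-routine ingredient is the classical comparison $|\ell_j| \asymp_r \lambda_j$ for Minkowski reduced bases; this is where I would need to invoke (or cite) the standard result, and it is the main step where one must take seriously the parenthetical equivalence of the two notions of reduced basis noted after the definition.
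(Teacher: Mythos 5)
The paper does not prove this lemma; it is cited verbatim from Eskin--Katznelson (their Lemma~2.2), so there is no ``paper's own proof'' to compare against. Your argument, however, is correct. Step~1 (pinning down $\prod|u_i|\asymp_{r,C}\prod|\ell_i|$ via Hadamard, the hypothesis, and Theorem~\ref{T:Minkowski}) and the divide-the-products trick in Step~3 are both sound, and Step~2's use of $\lambda_j\le|u_j|$ together with $|\ell_j|\ll_r\lambda_j$ closes the argument.

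One remark on the route you chose: you go through the successive minima $\lambda_j$, which requires the classical comparison $|\ell_j|\asymp_r\lambda_j$ for reduced bases (in turn resting on Minkowski's second theorem $\prod\lambda_j\asymp_r d(\bL)$), neither of which is stated in this paper. A proof that stays entirely inside the paper's toolkit would instead use Lemma~\ref{L: triangle}: since $d(\bL)\ge C|u_1|\cdots|u_r|$, writing $\ell_k=\sum_i a_{ik}u_i$ and noting that for each $j$ some $\ell_k$ with $k\le j$ must have a nonzero integer coefficient $a_{ik}$ with $i\ge j$, Lemma~\ref{L: triangle} gives $|\ell_j|\ge|\ell_k|\gg_{r,C}|u_i|\ge|u_j|$; the symmetric argument (applying Lemma~\ref{L: triangle} to the reduced basis, which satisfies its hypothesis by Theorem~\ref{T:Minkowski}) gives $|u_j|\gg_r|\ell_j|$. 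Both approaches are valid; the Lemma~\ref{L: triangle} route avoids invoking successive minima and Minkowski's second theorem, so it is a bit more self-contained relative to what the paper has set up. Your proof would benefit from either citing Minkowski's second theorem explicitly or noting that the comparison $|\ell_j|\asymp_r\lambda_j$ can itself be derived by the same divide-the-products device you use in Step~3.
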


For $T \in \R_{>0}$, 
define the ball $B(T) \colonequals \{x \in \R^n : |x| < T\}$.
For any lattice $\bL \subset \R^n$, let 
\[
	N(T,\bL) \colonequals \#(\bL \intersect B(T)).
\]
Let $\calV_r$ denote the volume of the $r$-dimensional unit ball.

\begin{lemma}[\cite{Schmidt1968}*{Lemma~2}]
\label{L:countinlattice}
Let $\bL$ be a rank~$r$ lattice with reduced basis $\{\ell_1,\ldots,\ell_r\}$. 
Then 
\[
	N(T, \bL) = \frac{\calV_r T^r}{d(\bL)} + 
	O_r\left(\sum_{j=0}^{r-1} \frac{T^j}{|\ell_1|\cdots|\ell_j|}\right).
\]
\end{lemma}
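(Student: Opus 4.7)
I would prove the lemma by induction on the rank $r$, actually establishing a slightly stronger statement that allows the ball to be centered at an arbitrary point $c \in \mathrm{span}_\R(\bL)$ with the same error bound. This generalization is forced on us because the inductive step produces off-center counting subproblems. The base case $r=1$ is elementary: for $\bL = \Z\ell_1$, the number of integers $k$ with $|k\ell_1 - c| < T$ is $2T/|\ell_1| + O(1)$ regardless of $c$, matching $\calV_1 T/|\ell_1| + O(1)$.

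For the inductive step, set $\bL' \colonequals \bL \cap \mathrm{span}_\R(\ell_1,\ldots,\ell_{r-1})$. Since $\ell_r$ is $\R$-linearly independent from $\ell_1,\ldots,\ell_{r-1}$, one has $\bL' = \Z\ell_1 + \cdots + \Z\ell_{r-1}$, and $\{\ell_1,\ldots,\ell_{r-1}\}$ is a reduced basis of $\bL'$ (otherwise swapping in a shorter-product basis of $\bL'$ would, together with $\ell_r$, contradict the reducedness of the full basis for $\bL$). Orthogonally decompose $\ell_r = u + v$ with $u \in W \colonequals \mathrm{span}_\R(\bL')$ and $v \perp W$, and set $h \colonequals |v|$; then $d(\bL) = h\,d(\bL')$ and $h \asymp_r |\ell_r|$ by Theorem~\ref{T:Minkowski} applied to both lattices. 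Writing $\bL = \bigsqcup_{k \in \Z} (k\ell_r + \bL')$ and intersecting with $B(T)$ slices the count according to how far each affine layer lies from the origin:
\[
N(T,\bL) \;=\; \sum_{k \in \Z,\, |k|h \le T} \#\bigl(\bL' \cap (B_W(T_k) - ku)\bigr),
\]
where $T_k \colonequals \sqrt{T^2 - k^2 h^2}$ and $B_W(T_k)$ is the ball of radius $T_k$ in $W$. Applying the translated inductive hypothesis to each slice gives
\[
N(T,\bL) \;=\; \sum_k \frac{\calV_{r-1} T_k^{r-1}}{d(\bL')} \;+\; O_r\!\left(\sum_k\sum_{j=0}^{r-2} \frac{T_k^j}{|\ell_1|\cdots|\ell_j|}\right).
\]

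The main term is $\calV_{r-1}/d(\bL')$ times the step-$h$ Riemann sum of $f(t) \colonequals \max(T^2-t^2,0)^{(r-1)/2}$, whose integral equals $\calV_r T^r/\calV_{r-1}$ by the Fubini slicing of the $r$-ball into $(r{-}1)$-ball cross-sections. Since $f$ is unimodal on $\R$ with supremum $T^{r-1}$, the bound $|h\sum_k f(kh) - \int f|\le 2h\sup f$ yields $\sum_k f(kh) = \calV_r T^r/(h\calV_{r-1}) + O(T^{r-1})$, producing a main contribution $\calV_r T^r/d(\bL)$ with error $O(T^{r-1}/d(\bL')) = O(T^{r-1}/(|\ell_1|\cdots|\ell_{r-1}|))$ via Minkowski. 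For the error contribution, there are $O(1 + T/h)$ nonzero slices, so replacing $T_k$ by $T$ and expanding in $1 + T/h$ produces terms $T^{j+1}/(h|\ell_1|\cdots|\ell_j|)$; using $h \asymp |\ell_r| \ge |\ell_{j+1}|$ (monotonicity of a reduced basis) collapses each into $T^{j+1}/(|\ell_1|\cdots|\ell_{j+1}|)$, which lies in the target sum $\sum_{j=0}^{r-1} T^j/(|\ell_1|\cdots|\ell_j|)$.

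The main obstacle is the Riemann-sum estimate for the main term: the integrand $(T^2-t^2)^{(r-1)/2}$ has an unbounded derivative at $t = \pm T$ when $r=2$, so the standard $O(h\sup|f'|)$ estimate is unavailable; the unimodality-based bound $O(h\sup|f|)$ sidesteps this cleanly. Verifying that the prefix of a reduced basis of $\bL$ is reduced for $\bL'$, and strengthening the inductive claim to translated balls (needed because each slice counts lattice points in a ball whose center in $W$ is shifted by $-ku$), are the secondary but essential technical points.
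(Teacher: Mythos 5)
The paper does not prove this lemma; it cites it directly as Lemma~2 of Schmidt's 1968 paper, so there is no in-paper proof to compare against. Your proposed proof is correct and gives a clean, self-contained inductive argument. The key choices you make are all sound: strengthening the inductive statement to balls $B(c,T)$ with arbitrary center $c$ in $\mathrm{span}_\R(\bL)$ is genuinely necessary, since slicing by cosets $k\ell_r + \bL'$ produces $(r{-}1)$-dimensional subproblems centered at $-ku$ (and, once the strengthening is in place, also at $c_W - ku$). Your observation that $\{\ell_1,\ldots,\ell_{r-1}\}$ remains a reduced basis for $\bL' = \bL \cap W$ is correctly justified by the exchange argument (a shorter-product basis of $\bL'$, adjoined to $\ell_r$, would beat the assumed minimum for $\bL$), using that $\bL = \bL' \oplus \Z\ell_r$. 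The Riemann-sum estimate via unimodality, giving error $O(h \sup f)$, is exactly the right tool since the derivative bound fails at $t = \pm T$ for $r=2$; combined with $d(\bL) = h\,d(\bL')$ and Minkowski's $d(\bL') \asymp_r |\ell_1|\cdots|\ell_{r-1}|$ it yields the $j=r-1$ term. Finally the error-term bookkeeping is right: multiplying the inductive error by the $O(1 + T/h)$ slice count and using $h \asymp_r |\ell_r| \ge |\ell_{j+1}|$ shifts each $T^j/(|\ell_1|\cdots|\ell_j|)$ contribution up by one index, landing inside the target sum $\sum_{j=0}^{r-1}$. Whether this matches Schmidt's original argument in detail I cannot say without his paper in front of me, but your proof stands on its own.
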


Let $P_{n,r}(t)$ denote the number of primitive rank~$r$ sublattices of $\Z^n$
of determinant at most $t$.
The following is a crude version of a more precise theorem of Schmidt.
\begin{theorem}[\cite{Schmidt1968}*{Theorem~1}]
\label{T:Schmidt}
If $1 \le r \le n-1$, then $P_{n,r}(t) \asymp_n t^n$.
\end{theorem}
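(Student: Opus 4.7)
The plan is to prove $P_{n,r}(t) \asymp_n t^n$ by establishing matching upper and lower bounds, exploiting the orthogonal-complement duality to cut the range of $r$ in half. The first step is to observe that the map $\bL \mapsto \bL^\perp := \{v \in \Z^n : v \cdot w = 0 \text{ for all } w \in \bL\}$ is an involution on primitive sublattices of $\Z^n$, exchanging rank~$r$ with rank~$n-r$ and preserving the determinant (the determinant preservation is cleanest via the Hodge $\star$ operator identifying the primitive Plücker coordinates of $\bL$ with those of $\bL^\perp$). Hence $P_{n,r}(t) = P_{n,n-r}(t)$, and I may assume $r \le n/2$.

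For the lower bound, I would parameterize sublattices by short generating tuples. The number of $r$-tuples $(\ell_1, \ldots, \ell_r) \in (\Z^n)^r$ with each $|\ell_i| \le c\, t^{1/r}$ is $\asymp_n t^n$. By Hadamard's inequality, the $\Z$-span has determinant at most $c^r t$, so for $c$ small enough the determinant is at most $t$. A standard genericity argument, combined with Möbius inversion over possible index-$k$ overlattices, shows that a positive proportion of these tuples are linearly independent and span a \emph{primitive} sublattice; and each such primitive sublattice $\bL$ of determinant $d \asymp t$ accounts for only $O_r(1)$ tuples, because a ball of radius $c\, t^{1/r} \asymp d^{1/r}$ contains only $O_r(1)$ points of $\bL$ by Minkowski's theorem. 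This yields $P_{n,r}(t) \gg_n t^n$.

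For the upper bound, I would associate to each primitive rank-$r$ sublattice $\bL$ its Plücker vector $\ell_1 \wedge \cdots \wedge \ell_r \in \bigwedge^r \Z^n$, which is a primitive integer vector of Euclidean norm equal to $d(\bL)$ and which lies on the affine cone over the Grassmannian $\Gr(r,n) \subset \PP^{\binom{n}{r}-1}$. The upper bound then reduces to counting primitive integer points of Euclidean norm at most $t$ on this affine cone, which is exactly the content of Schmidt's theorem in \cite{Schmidt1968}, proved by integration over a Siegel fundamental domain for the $\GL_n(\Z)$-action on positive definite quadratic forms. The main obstacle is precisely this upper bound for intermediate $r$: for $r \in \{1, n-1\}$ the problem collapses to the elementary count of primitive vectors in a ball (via Möbius inversion against Gauss's lattice-point count), but for $1 < r < n-1$ the naive box count gives only $O_n(t^{\binom{n}{r}})$, and extracting the sharp exponent $n$ requires Schmidt's detailed analysis of how the Plücker relations cut down the count to the correct polynomial order.
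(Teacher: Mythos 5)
The paper does not prove this statement at all; it is imported verbatim from Schmidt's 1968 paper (Theorem~1 there), and the text surrounding it even flags that only a crude form of Schmidt's more precise asymptotic is being recorded. So there is no in-paper argument to compare against, and the fair question is whether your outline would actually constitute a proof. Your structural ideas are the right ones: the Pl\"ucker embedding identifies primitive rank-$r$ sublattices with primitive integer points of norm $d(\bL)$ on the affine cone over $\Gr(r,n)$, the Hodge-star duality $\bL \mapsto \bL^\perp$ gives $P_{n,r}(t) = P_{n,n-r}(t)$, and the cases $r \in \{1, n-1\}$ do reduce to counting primitive vectors in a ball. The duality is a pleasant observation, though Schmidt's proof treats all $r$ directly and does not need it.

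Your lower-bound argument, however, has a gap in the bookkeeping. You produce $\asymp_n t^n$ linearly independent $r$-tuples with $|\ell_i| \le c\,t^{1/r}$, and assert that each primitive $\bL$ ``of determinant $d \asymp t$'' absorbs only $O_r(1)$ of them. But you have not shown that a positive proportion of your tuples land in lattices with $d \asymp t$: Hadamard only gives the one-sided bound $d \le c^r t$, and the determinant can be tiny. For a $\bL$ with $d \ll t$, the ball $B(c\,t^{1/r})$ can contain far more than $O_r(1)$ lattice points (any $\bL$ with a short first minimum does), so the multiplicity is not uniformly bounded without first discarding the small-determinant tuples. The repair is genuine but routine: restrict to the positive-measure set of tuples whose Gram determinant is $\ge \epsilon_0\,(c\,t^{1/r})^{2r}$. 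That forces $d(\bL) \asymp t$, whereupon Minkowski forces all successive minima of $\bL$ to be $\asymp t^{1/r}$, and only then is your $O_r(1)$ multiplicity claim correct. As written, the chain of implications does not close.

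For the upper bound you explicitly defer to Schmidt, and your reason is the right one: a naive count of candidate reduced bases $(\ell_1,\ldots,\ell_r)$ with $\prod |\ell_i| \ll t$ already overshoots by a logarithm even for $r=2$, and removing it is where the reduction-theory work in \cite{Schmidt1968} lives. That is exactly the move the paper makes (citing rather than reproving), so you are not behind the paper here; but it does mean what you have written is a road map to Schmidt's proof rather than a proof.
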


\subsection{Lattices of alternating matrices}

{}From now on, $\bL$ is a primitive rank~$r$ lattice in $\Z^n$,
and $\{\ell_1,\ldots,\ell_r\}$ is a reduced basis of $\bL$. 
Define
\[
	\calA(\bL) \colonequals 
	\{A \in \M_n(\R)_{\alt}:  \textup{every row of $A$ is in $\bL$}\}.
\]
View $\calA(\bL)$ as a lattice in the space $\M_n(\R)\isom\R^{n^2}$.
If $A \in \calA(\bL)$, then $\rank A \leq r$;
we write
\[
	N(T, \calA(\bL)) = N_1(T, \calA(\bL)) + N_2(T, \calA(\bL)),
\]
where $N_1$ counts the matrices of rank exactly $r$ 
and $N_2$ counts those of rank $<r$.
If $A \in \M_n(\Z)_{\alt}$ and $\rank A = r$, 
then there exists a unique primitive rank~$r$ lattice $\bL \subset \Z^n$ 
such that $A \in \calA(\bL)$;
namely, $\bL = (\textup{row space of $A$}) \intersect \Z^n$.
Thus, as in \cite{Eskin-Katznelson1995}*{Proposition~1.1},
\begin{equation}\label{E:countbyL}
	N_{n,r}(T) = \sumprime_{\rk \bL = r} N_1(T, \calA(\bL)),
\end{equation}
where the prime indicates that $\bL$ ranges over 
\emph{primitive} rank~$r$ lattices in $\Z^n$.
We will use this to estimate $N_{n,r}(T)$.

To apply Lemma~\ref{L:countinlattice} to count points in $\calA(\bL)$, 
we need good estimates on the size of a reduced basis of $\calA(\bL)$.  
Identify $\R^n \tensor \R^n$ with $\M_n(\R)$ by mapping $u \tensor v$ to $uv^T$.
For $1 \le i < j \le r$, define
\[
	R_{ij} := \ell_i \tensor \ell_j - \ell_j \tensor \ell_i \in \M_n(\R).
\]

\begin{lemma}[Analogue of Lemma~3.2 of \cite{Eskin-Katznelson1995}]
\label{L: absbound}
\hfill
\begin{enumerate}[\upshape (a)]
\item \label{ijst}
If $i<j$ and $s<t$, then $(R_{ij}, R_{st}) 
= 2 (\ell_i,\ell_s) (\ell_j,\ell_t) - 2 (\ell_i,\ell_t) (\ell_j,\ell_s)$.
\item \label{|R_ij|}
For $1 \leq i < j \leq r$, we have $|R_{ij}| \asymp_r |\ell_i| |\ell_j|$.
\end{enumerate}
\end{lemma}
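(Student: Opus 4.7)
For part~\eqref{ijst}, the approach is a direct bilinear expansion. Under the identification $\R^n \otimes \R^n \isom \M_n(\R)$ sending $u \otimes v \mapsto uv^T$, the Frobenius inner product $(A,B) \colonequals \tr(AB^T) = \sum_{k,l} A_{kl} B_{kl}$ satisfies the product rule $(u \otimes v, w \otimes z) = (u,w)(v,z)$. Expanding $R_{ij} = \ell_i \tensor \ell_j - \ell_j \tensor \ell_i$ bilinearly against $R_{st} = \ell_s \tensor \ell_t - \ell_t \tensor \ell_s$ produces four terms, and pairing up the two cross terms gives the claimed identity. There is no obstacle here; it is a one-line calculation.

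For part~\eqref{|R_ij|}, specializing part~\eqref{ijst} to $(s,t) = (i,j)$ yields
\[
	|R_{ij}|^2 = 2 \bigl( |\ell_i|^2 |\ell_j|^2 - (\ell_i,\ell_j)^2 \bigr),
\]
so the task reduces to showing $|\ell_i|^2 |\ell_j|^2 - (\ell_i,\ell_j)^2 \asymp_r |\ell_i|^2 |\ell_j|^2$. The upper bound is immediate from Cauchy--Schwarz. The main content is the matching lower bound, which amounts to saying that the angle between any two vectors in a reduced basis is bounded away from $0$ and $\pi$ (in a sense depending only on $r$).

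To establish this lower bound, the plan is to invoke Lemma~\ref{L: triangle}. Since $\{\ell_1,\ldots,\ell_r\}$ is reduced, Theorem~\ref{T:Minkowski} gives $d(\bL) \asymp_r |\ell_1|\cdots|\ell_r|$, so Lemma~\ref{L: triangle} applies with some absolute constant $C = C(r)$. Applying it to the particular combinations $|\ell_j|\,\ell_i \pm |\ell_i|\,\ell_j$ yields
\[
	\bigl| |\ell_j|\,\ell_i \pm |\ell_i|\,\ell_j \bigr|^2 \gg_r 2 |\ell_i|^2 |\ell_j|^2,
\]
and expanding the left-hand side gives $2|\ell_i|^2 |\ell_j|^2 \pm 2 |\ell_i| |\ell_j| (\ell_i,\ell_j) \gg_r 2 |\ell_i|^2 |\ell_j|^2$, i.e.\ $|\ell_i| |\ell_j| \pm (\ell_i,\ell_j) \gg_r |\ell_i| |\ell_j|$. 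Multiplying the $+$ and $-$ inequalities together yields $|\ell_i|^2 |\ell_j|^2 - (\ell_i,\ell_j)^2 \gg_r |\ell_i|^2 |\ell_j|^2$, which combined with Cauchy--Schwarz completes the proof of~\eqref{|R_ij|}.

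The only step requiring any thought is the choice of linear combinations $|\ell_j|\,\ell_i \pm |\ell_i|\,\ell_j$ in Lemma~\ref{L: triangle}; everything else is routine. An alternative route would be to observe that the restriction of a reduced basis to $\{\ell_i, \ell_j\}$ is within a bounded factor of being reduced for the rank-$2$ sublattice $\Z\ell_i + \Z\ell_j$, then apply Theorem~\ref{T:Minkowski} to that sublattice, whose determinant squared is exactly $|\ell_i|^2|\ell_j|^2 - (\ell_i,\ell_j)^2$; but the direct approach via Lemma~\ref{L: triangle} is cleaner and avoids having to justify this reducedness.
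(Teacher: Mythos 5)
Your proof is correct and follows essentially the same route as the paper: both parts proceed by the bilinear expansion and the specialization $(s,t)=(i,j)$, and both reduce the lower bound in (b) to the almost-orthogonality of a Minkowski-reduced basis. The only difference is one of presentation: the paper summarizes the last step as ``$\sin\theta \asymp_r 1$ by Theorem~\ref{T:Minkowski}'', while you supply the justification explicitly via Lemma~\ref{L: triangle} applied to $|\ell_j|\,\ell_i \pm |\ell_i|\,\ell_j$, which is a clean way to fill in that terse step.
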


\begin{proof}
\hfill
\begin{enumerate}[\upshape (a)]
\item 
Distribute and use the identity 
$(u \tensor v, u' \tensor v') = (u, u')(v, v')$ four times.
\item 
Taking $(i,j)=(s,t)$ in~\eqref{ijst} yields
\[
|R_{ij}|^2 = 2|\ell_i|^2|\ell_j|^2 - 2(\ell_i, \ell_j)^2 = 2 |\ell_i|^2 |\ell_j|^2 \sin^2 \theta,
\]
where $\theta$ is the angle between $\ell_i$ and $\ell_j$.
By Theorem~\ref{T:Minkowski}, we have $\sin \theta \asymp_r 1$.\qedhere
\end{enumerate}
\end{proof}

\begin{proposition}[Analogue of Proposition~3.3 of \cite{Eskin-Katznelson1995}] 
\label{P: basis}
The set $\{R_{ij}: 1 \leq i < j \leq r\}$ is a basis of $\calA(\bL)$.
\end{proposition}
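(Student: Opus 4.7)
The plan is to proceed in three steps: verifying that each $R_{ij}$ lies in $\calA(\bL)$, establishing $\Z$-linear independence, and then showing that the $R_{ij}$ span $\calA(\bL)$ over $\Z$. The first step is essentially a check: $R_{ij}^T = -R_{ij}$ with zero diagonal makes it alternating, and the $k$-th row of $R_{ij}$ is $(\ell_i)_k \ell_j - (\ell_j)_k \ell_i$, which lies in $\bL$. For linear independence, the $\{\ell_i \wedge \ell_j : i<j\}$ are linearly independent in $\wedge^2 V$ where $V = \R \ell_1 \oplus \cdots \oplus \R \ell_r$, and the antisymmetrization map $\wedge^2 V \injects \M_n(\R)$ sending $u \wedge v$ to $uv^T - vu^T$ is injective, so the images $R_{ij}$ are even $\R$-linearly independent.

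The nontrivial step is integral spanning, and this is where I expect to use primitivity of $\bL$ crucially. The plan is to extend $\{\ell_1,\ldots,\ell_r\}$ to a $\Z$-basis $\{\ell_1, \ldots, \ell_n\}$ of $\Z^n$, which is possible exactly because $\bL$ is primitive. Given $A \in \calA(\bL)$, the assumption that every row lies in $\bL$ lets me write $A = \sum_{j=1}^r M_j \tensor \ell_j$ for some integer vectors $M_j \in \Z^n$, and expanding $M_j = \sum_{k=1}^n a_{jk}\ell_k$ in the extended basis with $a_{jk} \in \Z$ gives
\[
A = \sum_{j=1}^r \sum_{k=1}^n a_{jk}\, \ell_k \tensor \ell_j.
\]
Setting $b_{kj} := a_{jk}$ for $1 \le j \le r$ and $b_{kj} := 0$ for $j>r$, this becomes $A = \sum_{j,k=1}^n b_{kj}\,\ell_k \tensor \ell_j$.

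The final move is to impose the alternating condition. Because $\{\ell_k \tensor \ell_j : 1 \le j,k \le n\}$ is a basis of $\M_n(\R)$, the equation $A + A^T = 0$ forces $b_{kj} + b_{jk} = 0$ for all $j,k$. In particular $b_{jj}=0$, and the original vanishing $b_{kj}=0$ for $j>r$ now propagates to $b_{kj}=0$ for $k>r$ as well, so $b$ is an integer, antisymmetric $r \times r$ matrix. Grouping $\ell_k \tensor \ell_j$ and $\ell_j \tensor \ell_k$ for $k<j$ then yields
\[
A = \sum_{1 \le k < j \le r} b_{kj}\,(\ell_k \tensor \ell_j - \ell_j \tensor \ell_k) = \sum_{k < j} b_{kj}\, R_{kj},
\]
an integer combination of the $R_{kj}$, completing the proof. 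The main potential pitfall is making sure the index bookkeeping and the $\ell_k \tensor \ell_j$ versus $\ell_j \tensor \ell_k$ conventions are handled correctly; there is no analytic difficulty, and primitivity of $\bL$ carries all the weight.
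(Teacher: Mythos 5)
Your proof is correct, and it follows what is essentially the standard argument that the paper defers to (Eskin--Katznelson's Proposition 3.3 for the symmetric case): membership of each $R_{ij}$ in $\calA(\bL)$ is a direct check, linear independence follows from the identification $\wedge^2 V \hookrightarrow \M_n(\R)_{\alt}$, and integral spanning uses primitivity of $\bL$ to extend $\{\ell_1,\ldots,\ell_r\}$ to a $\Z$-basis of $\Z^n$ and then imposes the alternating condition on the coefficient array. The index bookkeeping (in particular propagating $b_{kj}=0$ for $j>r$ to $k>r$ via antisymmetry) is handled correctly.
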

\begin{proof}
The proof is analogous to that of \cite{Eskin-Katznelson1995}*{Proposition~3.3}.
\end{proof}

For $g \in \GL_n(\R)$, let $F_g \colon \M_n(\R)_{\alt} \to \M_n(\R)_{\alt}$ 
be the linear map $A \mapsto gAg^T$. 

\begin{proposition}[Analogue of Proposition~3.4 of \cite{Eskin-Katznelson1995}]
\label{P: scale}
We have $\det F_g = (\det g)^{n-1}$ for all $g \in \GL_n(\R)$.
\end{proposition}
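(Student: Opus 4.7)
The plan is to exploit the fact that $F$ is a group homomorphism, so $g \mapsto \det F_g$ is a character of $\GL_n(\R)$, which forces it to be a power of $\det$; the correct power can then be read off by plugging in a single convenient matrix.

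First I will check multiplicativity: $F_{gh}(A) = (gh) A (gh)^T = g(h A h^T) g^T = F_g(F_h(A))$, so $g \mapsto F_g$ is a representation of $\GL_n(\R)$ on the $\binom{n}{2}$-dimensional space $\M_n(\R)_{\alt}$. Taking determinants, $\chi(g) \colonequals \det F_g$ is a character $\GL_n(\R) \to \R^\times$.

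Next I will observe that, in any fixed basis of $\M_n(\R)_{\alt}$, the matrix entries of $F_g$ are quadratic polynomials in the entries of $g$, so $\chi(g)$ is a polynomial in the entries of $g$. The only polynomial (equivalently, algebraic) characters of $\GL_n$ are powers of $\det$, since $\SL_n$ is perfect and $\GL_n^{\ab} = \mathbb{G}_m$ via $\det$. Hence $\chi(g) = (\det g)^k$ for some nonnegative integer $k$.

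Finally I will pin down $k$ by evaluating at $g = \operatorname{diag}(\lambda,1,\ldots,1)$. In the basis $\{E_{ij} - E_{ji} : 1 \le i < j \le n\}$ of $\M_n(\R)_{\alt}$, a direct computation gives $g(E_{ij}-E_{ji})g^T = \lambda(E_{ij}-E_{ji})$ when $i=1$ and $g(E_{ij}-E_{ji})g^T = E_{ij}-E_{ji}$ otherwise, so $F_g$ is diagonal with exactly $n-1$ eigenvalues equal to $\lambda$ and the rest equal to $1$. Thus $\det F_g = \lambda^{n-1} = (\det g)^{n-1}$, forcing $k=n-1$. There is no real obstacle here: once the representation-theoretic framing identifies $\chi$ as a power of $\det$, the proof reduces to the one-line diagonal calculation above.
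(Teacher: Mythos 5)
Your proof is correct and takes essentially the same approach as the paper: both recognize that $g \mapsto \det F_g$ is an algebraic homomorphism $\GL_n(\R) \to \R^\times$, hence a power $(\det g)^\alpha$ of the determinant, and then determine $\alpha$ by evaluating at one convenient matrix. The only difference is cosmetic: you evaluate at $\operatorname{diag}(\lambda,1,\ldots,1)$ and count eigenvalues, while the paper evaluates at $g = tI$, for which $F_g = t^2\cdot\operatorname{id}$ gives $(t^2)^{n(n-1)/2} = (t^n)^\alpha$ immediately.
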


\begin{proof}
Since $g \mapsto \det F_g$ is 
an algebraic homomorphism $\GL_n(\R) \to \R^\times$,
the identity $\det F_g = (\det g)^\alpha$ holds for some $\alpha \in \Z$.
Evaluating at $g=tI$ for any $t \in \R^\times$ yields 
$(t^2)^{n(n-1)/2} = (t^n)^\alpha$, so $\alpha=n-1$.
\end{proof}

\begin{lemma}[Analogue of Lemma~3.5 of \cite{Eskin-Katznelson1995}]
\label{L: changeofbasis}
We have $d(\calA(\bL)) = 2^{r(r-1)/4}d(\bL)^{r-1}$.
\end{lemma}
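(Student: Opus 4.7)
The plan is to compute $d(\calA(\bL))^2$ directly as the Gram determinant of the basis $\{R_{ij}\}_{1\le i<j\le r}$, recognize this Gram matrix as (twice) the second compound matrix $\Lambda^2 M$ of the Gram matrix $M \colonequals ((\ell_i,\ell_j))_{1\le i,j\le r}$, and then invoke the classical Sylvester--Franke identity $\det(\Lambda^2 M) = (\det M)^{r-1}$.

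First, by Proposition~\ref{P: basis}, $\{R_{ij}: 1\le i<j\le r\}$ is a $\Z$-basis of $\calA(\bL)$, so
\[
  d(\calA(\bL))^2 = \det G, \qquad G \colonequals \bigl((R_{ij},R_{st})\bigr)_{i<j,\,s<t},
\]
a $\binom{r}{2}\times \binom{r}{2}$ matrix. By Lemma~\ref{L: absbound}\eqref{ijst},
\[
  (R_{ij},R_{st}) = 2\bigl[(\ell_i,\ell_s)(\ell_j,\ell_t) - (\ell_i,\ell_t)(\ell_j,\ell_s)\bigr],
\]
which is exactly twice the $((i,j),(s,t))$-minor of $M$. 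Hence $G = 2\,\Lambda^2 M$.

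Second, pulling out the scalar factor on a matrix of size $\binom{r}{2}=r(r-1)/2$ and applying Sylvester--Franke gives
\[
  \det G \;=\; 2^{r(r-1)/2}\,\det(\Lambda^2 M) \;=\; 2^{r(r-1)/2}\,(\det M)^{r-1}.
\]
Since $\det M = d(\bL)^2$ by definition of the determinant of a lattice via its Gram matrix, we obtain $\det G = 2^{r(r-1)/2}\, d(\bL)^{2(r-1)}$, and taking square roots yields $d(\calA(\bL)) = 2^{r(r-1)/4} d(\bL)^{r-1}$, as claimed.

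The only nontrivial ingredient is the Sylvester--Franke identity, and even that is not really an obstacle: it is a standard multilinear algebra fact that can be proved in one line by orthogonally diagonalizing the symmetric matrix $M$, since if $M$ has eigenvalues $\lambda_1,\dots,\lambda_r$ then $\Lambda^2 M$ has eigenvalues $\{\lambda_i\lambda_j\}_{i<j}$, whose product is $\prod_i \lambda_i^{r-1} = (\det M)^{r-1}$. The main care needed is in tracking the factor of $\sqrt{2}$, which ultimately accounts for the $2^{r(r-1)/4}$ in the final formula; this is the only place where the proof differs numerically from the symmetric-matrix analogue in \cite{Eskin-Katznelson1995}.
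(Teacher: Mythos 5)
Your proof is correct, and it takes a genuinely different route from the paper's. You compute $d(\calA(\bL))^2$ directly as the Gram determinant of the $R_{ij}$, observe via Lemma~\ref{L: absbound}\eqref{ijst} that this Gram matrix is $2\,\Lambda^2 M$ (where $M$ is the Gram matrix of the $\ell_i$), and invoke the Sylvester--Franke identity $\det(\Lambda^2 M) = (\det M)^{r-1}$. The paper instead first rotates $\ell_1,\dots,\ell_r$ into $\R^r$ (legitimate since, by Lemma~\ref{L: absbound}\eqref{ijst}, the squared identity is a polynomial in the inner products $(\ell_i,\ell_j)$), checks the base case $\bL=\Z^r$ by hand (where the $R_{ij}$ are orthogonal of length $\sqrt 2$), and then propagates to arbitrary $\bL$ via $F_g$ and Proposition~\ref{P: scale}. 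The two arguments ultimately touch the same multilinear-algebra fact: under the identification $\M_n(\R)_{\alt}\cong\Lambda^2\R^n$, the map $F_g$ \emph{is} $\Lambda^2 g$, so Proposition~\ref{P: scale} is the Sylvester--Franke identity in disguise (proved there by a $\GL_n$-character argument rather than your eigenvalue argument). Your approach is a bit more economical: it avoids both the reduction to $n=r$ and the separate base-case check, at the cost of naming and proving the compound-matrix identity. Both are fine; yours is the more direct computation, the paper's is the more ``structural'' one.
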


\begin{proof}
View $\calA(\bL)$ as the lattice generated by the $R_{ij}$ 
(Proposition~\ref{P: basis}).
By Lemma~\ref{L: absbound}\eqref{ijst},
the square of the desired identity is 
a polynomial identity in the $(\ell_i,\ell_j)$.
Thus we may rotate $\ell_1,\ldots,\ell_r$
to vectors in $\R^r$ with the same inner products;
now $n=r$.

The basis of $\calA(\Z^r)$ provided by Proposition~\ref{P: basis}
consists of $r(r-1)/2$ orthogonal vectors of length $\sqrt{2}$,
so for $\bL=\Z^r$, both sides of the identity equal $2^{r(r-1)/4}$.
Now let $\bL$ be any other rank~$r$ lattice in $\Z^r$.
Let $g \in \GL_r(\R)$ be the linear map taking $\Z^r$ to $\bL$.
Then $F_g$ takes $\calA(\Z^r)$ to $\calA(\bL)$.
Replacing $\Z^r$ by $\bL$
scales the two sides of the identity by $\abs{\det F_g}$
and $\abs{\det g}^{r-1}$, respectively;
these factors are equal by Proposition~\ref{P: scale},
so the identity is preserved.
\end{proof}

Combining Lemma~\ref{L: changeofbasis},
Theorem~\ref{T:Minkowski}, 
and Lemma~\ref{L: absbound}\eqref{|R_ij|} yields
\begin{equation}\label{E:goodbasis}
d(\calA(\bL))=2^{r(r-1)/4}d(\bL)^{r-1} \gg_{r} \prod_i|\ell_i|^{r-1} \gg_{r} \prod_{i<j} |R_{ij}|.
\end{equation}

\subsection{Upper bound on $N_{n,r}(T)$}

\begin{lemma}
\label{L:bijection}
The map
\begin{align*}
	\M_r(\Z)_{\alt} &\to \calA(\bL) \\
	(a_{ij}) &\mapsto \sum_{i<j} a_{ij} R_{ij}
\end{align*}
is a bijection that preserves ranks of matrices.
\end{lemma}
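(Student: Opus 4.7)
The plan is to reduce the lemma to two already-available facts and a matrix-factorization identity. The bijection statement is essentially immediate from Proposition~\ref{P: basis}: since $\{R_{ij} : 1 \le i < j \le r\}$ is a $\Z$-basis of $\calA(\bL)$, the $\Z$-linear map sending the standard basis of $\M_r(\Z)_{\alt}$ (indexed by pairs $i<j$) to the $R_{ij}$ is an isomorphism of free $\Z$-modules of rank $r(r-1)/2$. So the real content of the lemma is rank preservation.

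To establish rank preservation, I would first rewrite the map in matrix form. Let $M \in \M_{n\times r}(\R)$ be the matrix whose columns are $\ell_1,\ldots,\ell_r$, and for $(a_{ij}) \in \M_r(\Z)_{\alt}$ let $\tilde a$ denote this same alternating matrix viewed in $\M_r(\R)_{\alt}$. Using the identification $u \tensor v = uv^T$, we have $R_{ij} = \ell_i \ell_j^T - \ell_j \ell_i^T$, and expanding gives
\[
\sum_{i<j} a_{ij} R_{ij} \; = \; \sum_{i<j} a_{ij}\bigl(\ell_i\ell_j^T - \ell_j\ell_i^T\bigr) \;=\; \sum_{i,j} \tilde a_{ij}\,\ell_i \ell_j^T \;=\; M\,\tilde a\,M^T.
\]
So the map is $\tilde a \mapsto M\tilde a M^T$.

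The main step is then to observe that the columns $\ell_1,\ldots,\ell_r$ of $M$ are a basis of $\bL\tensor\R$ and in particular are linearly independent, so $M$ has full column rank $r$. Equivalently, $M \colon \R^r \to \R^n$ is injective and $M^T \colon \R^n \to \R^r$ is surjective. The standard rank identities $\rank(MX)=\rank X$ (for $M$ injective) and $\rank(YM^T)=\rank Y$ (for $M^T$ surjective) then give
\[
\rank(M \tilde a M^T) \;=\; \rank(\tilde a M^T) \;=\; \rank(\tilde a),
\]
which is exactly the claimed rank preservation. There is no real obstacle here; the only thing to be careful about is the bookkeeping between the upper-triangular indexing $(a_{ij})_{i<j}$ used for the basis $\{R_{ij}\}$ and the full alternating matrix $\tilde a$ used in the factorization $M\tilde a M^T$, but this is automatic once one expands $R_{ij} = \ell_i\ell_j^T - \ell_j\ell_i^T$.
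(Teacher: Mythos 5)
Your proof is correct, and it takes a slightly more direct route than the paper's. The paper first handles the special case where $\ell_1,\ldots,\ell_r$ are the first $r$ standard basis vectors (so the map simply pads an $r\times r$ alternating matrix with zeros to get an $n\times n$ one, clearly preserving rank) and then reduces the general case to this one by replacing $\ell_i$ with $g\ell_i$ for $g\in\GL_n(\R)$, which sends $A$ to $gAg^T$. Your one-shot factorization $\sum_{i<j} a_{ij}R_{ij} = M\tilde a M^T$ with $M=[\ell_1|\cdots|\ell_r]$ of full column rank $r$ packages both steps into a single identity: the paper's special case is exactly $M = \binom{I_r}{0}$, and its $\GL_n$-reduction is just writing a general $M$ as $g\binom{I_r}{0}$. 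Both arguments rest on the same underlying fact — that the map is a congruence $\tilde a \mapsto M\tilde a M^T$ by a matrix of full rank — so the content is identical; your presentation is a bit cleaner and avoids the explicit choice of $g$, while the paper's emphasizes the ``padding by zeros'' picture, which makes the rank preservation geometrically transparent at a glance.
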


\begin{proof}
It is a bijection by Proposition~\ref{P: basis}.
If $\ell_1,\ldots,\ell_r$ are the first $r$ standard basis vectors,
then the bijection simply extends a matrix in $\M_r(\Z)_{\alt}$
by zeros to a matrix in $\M_n(\Z)_{\alt}$; this preserves rank.
The general case follows:
if for some $g \in \GL_n(\R)$
we replace $\ell_1,\ldots,\ell_r$ 
by $g\ell_1,\ldots,g\ell_r$,
then $A \colonequals \sum_{i<j} a_{ij} R_{ij}$ is replaced by $gAg^T$, 
which has the same rank as $A$.
\end{proof}

\begin{lemma}
\label{L:linear dependence}
Let $B = (a_{ij}) \in \M_r(\R)_{\alt}$.
If there exist $i \le j$ with $i+j \le r+1$ such that for every pair $s<t$
with $s \ge i$ and $t \ge j$, we have $a_{st}=0$,
then $\rank B \le r-1$.
\end{lemma}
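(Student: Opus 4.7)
\medskip

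The plan is to show that the hypothesis forces an entire rectangular block of $B$ to vanish, and then to bound the rank by splitting the row set into two parts. Concretely, let $M$ denote the submatrix of $B$ with rows indexed by $\{i, i+1, \ldots, r\}$ and columns indexed by $\{j, j+1, \ldots, r\}$. I claim $M = 0$.

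To verify the claim, consider an entry $a_{st}$ of $M$. If $s < t$, then the hypothesis directly gives $a_{st} = 0$. If $s = t$, then $a_{st} = 0$ because $B$ is alternating. If $s > t$, then I want to reduce to the first case: $a_{st} = -a_{ts}$ by alternation, and the pair $(t,s)$ satisfies $t < s$, $t \ge j \ge i$, and $s > t \ge j$, so the hypothesis applies to $(t,s)$ and gives $a_{ts} = 0$. This is the key step where the alternating property is used to upgrade the one-sided vanishing assumption to a full rectangular block of zeros.

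Given that $M = 0$, rows $i, i+1, \ldots, r$ of $B$ each have their last $r - j + 1$ coordinates equal to zero, so they all lie in a subspace of $\R^r$ of dimension $j-1$. Therefore the span of these $r - i + 1$ rows has dimension at most $j-1$. The remaining rows $1, 2, \ldots, i-1$ span a subspace of dimension at most $i-1$. Summing these bounds,
\[
    \rank B \;\le\; (i-1) + (j-1) \;=\; i+j-2 \;\le\; (r+1) - 2 \;=\; r-1,
\]
which is the desired conclusion.

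There is no real obstacle here: the content is entirely the observation that the alternating hypothesis, combined with the condition $i \le j$, lets one reflect the vanishing across the diagonal to fill in the sub-diagonal part of the same block. The inequality $i + j \le r+1$ is then used precisely once, to convert the rank split $(i-1) + (j-1)$ into the bound $r-1$.
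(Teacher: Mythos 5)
Your proof is correct and follows the same route as the paper: observe that rows $i$ through $r$ lie in the coordinate subspace $\R^{j-1}$, then bound the rank by $(i-1)+(j-1) \le r-1$. The paper's version is terser and leaves implicit the case analysis you carried out (using alternation to handle $s>t$ and $s=t$), but the underlying argument is identical.
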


\begin{proof}
The $i$th through the $n$th row
are contained in the initial copy of $\R^{j-1}$ in $\R^r$,
so they together with the first $i-1$ rows
span a space of dimension at most $(i-1)+(j-1) \le r-1$.
Thus $\rank B \le r-1$.
\end{proof}

Recall that $N_1(T, \calA(\bL))$ counts the matrices in $\calA(\bL)$ of rank exactly $r = \rk \bL$.

\begin{lemma}[Analogue of Lemma~4.1 of \cite{Eskin-Katznelson1995}]
\label{L:getpoint}
If $N_1(T, \calA(\bL)) > 0$, then $|\ell_i||\ell_j| \ll_r T$
for all pairs $(i,j)$ such that $i+j \le r+1$.
\end{lemma}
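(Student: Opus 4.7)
My plan is to take any $A \in \calA(\bL)$ with $\rank A = r$ and $|A| < T$, expand $A$ in the basis $\{R_{st}\}_{s<t}$ of $\calA(\bL)$ provided by Proposition~\ref{P: basis}, and exploit the near-orthogonality of this basis together with the non-vanishing of a well-chosen coefficient to squeeze out the desired bound $|\ell_i||\ell_j| \ll_r T$.

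More precisely, by Lemma~\ref{L:bijection} we may write $A = \sum_{s<t} a_{st} R_{st}$ with $B \colonequals (a_{st}) \in \M_r(\Z)_{\alt}$ satisfying $\rank B = \rank A = r$. The key analytic step is to lower bound $|A|^2$ in terms of the $a_{st}$ and $|\ell_s|, |\ell_t|$. By \eqref{E:goodbasis} the basis $\{R_{st}\}$ of $\calA(\bL)$ satisfies the near-orthogonality hypothesis of Lemma~\ref{L: triangle} with constant depending only on $r$, so
\[
    |A|^2 \;\gg_r\; \sum_{s<t} a_{st}^2\, |R_{st}|^2 \;\gg_r\; \sum_{s<t} a_{st}^2\, |\ell_s|^2 |\ell_t|^2,
\]
where the second inequality uses Lemma~\ref{L: absbound}\eqref{|R_ij|}.

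The combinatorial step is to locate a nonzero $a_{st}$ with $s \ge i$ and $t \ge j$. Fix $(i,j)$ with $i < j$ and $i + j \le r+1$. Since $\rank B = r$, the contrapositive of Lemma~\ref{L:linear dependence} guarantees the existence of some pair $s < t$ with $s \ge i$, $t \ge j$, and $a_{st} \ne 0$. Because $B$ has integer entries, $a_{st}^2 \ge 1$; because $|\ell_1| \le \cdots \le |\ell_r|$ in a reduced basis, $|\ell_s| \ge |\ell_i|$ and $|\ell_t| \ge |\ell_j|$.

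Combining these observations with $|A| < T$ yields
\[
    |\ell_i|^2 |\ell_j|^2 \;\le\; a_{st}^2 |\ell_s|^2 |\ell_t|^2 \;\le\; \sum_{s'<t'} a_{s't'}^2 |\ell_{s'}|^2 |\ell_{t'}|^2 \;\ll_r\; |A|^2 \;<\; T^2,
\]
and the result follows. I do not foresee a serious obstacle: the only slightly delicate point is that Lemma~\ref{L: triangle} is applied in the space of matrices to the basis $\{R_{st}\}$ indexed by pairs, but this is precisely the content of \eqref{E:goodbasis}, so no new ingredient is needed beyond what has already been established.
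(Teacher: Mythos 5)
Your proof is correct and follows essentially the same route as the paper's: use Lemma~\ref{L:bijection} to pull back $A$ to an alternating integer matrix $B=(a_{st})$ of rank~$r$, invoke Lemma~\ref{L:linear dependence} (contrapositively) to produce a nonzero entry $a_{st}$ with $s\ge i$, $t\ge j$, and then combine \eqref{E:goodbasis}, Lemma~\ref{L: triangle}, and Lemma~\ref{L: absbound}\eqref{|R_ij|} to isolate the $|R_{st}|^2$ contribution in $|A|^2$ and conclude $|\ell_i||\ell_j|\le|\ell_s||\ell_t|\ll_r|A|<T$. The only cosmetic differences are that you keep the full sum $\sum a_{s't'}^2|\ell_{s'}|^2|\ell_{t'}|^2$ before extracting one term, whereas the paper drops directly to the single term, and you restrict to $i<j$ (harmless, since $r$ is even so the pairs $(i,j)$ actually used in Corollary~\ref{C: suffcondition} satisfy $i\ne j$).
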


\begin{proof}
Suppose that $N_1(T,\calA(\bL))>0$.
Thus there exists $A \in \calA(\bL)$ of rank~$r$ with $|A| \le T$.
By Lemma~\ref{L:bijection},
we may write $A = \sum_{i < j} a_{ij} R_{ij}$
for some $B=(a_{ij}) \in \M_r(\Z)_{\alt}$ also of rank~$r$.
Lemma~\ref{L:linear dependence}
yields a pair $s<t$ with $s \ge i$ and $t \ge j$
such that $a_{st} \ne 0$.
By \eqref{E:goodbasis} and Lemma~\ref{L: triangle},
$|A|^2 \gg_r a_{st}^2 |R_{st}|^2 \ge |R_{st}|^2$.
Thus $|\ell_i| |\ell_j| \le |\ell_s| |\ell_t| \asymp_r |R_{st}| \ll_r |A| \le T$
(the second step is Lemma~\ref{L: absbound}\eqref{|R_ij|}).
\end{proof}

\begin{corollary}[Analogue of Corollary~4.2 of \cite{Eskin-Katznelson1995}]
\label{C: suffcondition}
If $N_1(T, \calA(\bL)) > 0$, then $d(\bL) \ll_r T^{r/2}$.
\end{corollary}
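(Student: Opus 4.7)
\smallskip

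The plan is to combine Lemma~\ref{L:getpoint} with Minkowski's theorem (Theorem~\ref{T:Minkowski}), using a simple pairing trick on the indices. Recall that $d(\bL) \asymp_r |\ell_1| \cdots |\ell_r|$ by Minkowski, so it suffices to show $|\ell_1| \cdots |\ell_r| \ll_r T^{r/2}$. Lemma~\ref{L:getpoint} gives an inequality $|\ell_i||\ell_j| \ll_r T$ for every index pair $(i,j)$ with $i+j \le r+1$, so the task reduces to choosing $r/2$ such pairs that together use each index in $\{1,\dots,r\}$ exactly once.

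First I would observe that we may assume $r$ is even: if $r$ is odd, then no alternating matrix has rank exactly $r$, so $N_1(T,\calA(\bL))=0$ and the hypothesis fails, making the conclusion vacuous. With $r$ even in hand, I would then take the pairing $(1,r),(2,r-1),\dots,(r/2,\,r/2+1)$. Each such pair $(i,\,r+1-i)$ satisfies $i+(r+1-i)=r+1$, so it is within the range of Lemma~\ref{L:getpoint}, yielding $|\ell_i|\,|\ell_{r+1-i}| \ll_r T$.

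Next I would multiply these $r/2$ inequalities. Since the chosen pairs partition $\{1,\dots,r\}$, the product telescopes to
\[
|\ell_1|\,|\ell_2|\cdots|\ell_r|
= \prod_{i=1}^{r/2} |\ell_i|\,|\ell_{r+1-i}|
\ll_r T^{r/2}.
\]
Combining with Theorem~\ref{T:Minkowski} gives $d(\bL) \asymp_r |\ell_1|\cdots|\ell_r| \ll_r T^{r/2}$, which is the claim.

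There is essentially no obstacle here: the nontrivial content has already been extracted in Lemma~\ref{L:getpoint}, and what remains is the combinatorial observation that an even number of indices can be paired up so that every pair sums to $r+1$. The only point that needs a brief word is why the odd-$r$ case is vacuous, which follows from the fact that alternating matrices have even rank (already used implicitly in Theorem~\ref{T:counting matrices by l^2}).
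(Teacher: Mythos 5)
Your proof is correct and follows essentially the same route as the paper: both combine Minkowski's theorem with Lemma~\ref{L:getpoint} by pairing indices $i$ and $r+1-i$. The only cosmetic difference is that the paper multiplies the bound $|\ell_i||\ell_j| \ll_r T$ over all $r$ ordered pairs with $i+j=r+1$ to get $d(\bL)^2 \ll_r T^r$, while you multiply over the $r/2$ unordered pairs to get $d(\bL) \ll_r T^{r/2}$ directly; the remark that odd $r$ is vacuous is a harmless extra, since the surrounding context already restricts to even $r$.
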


\begin{proof}
By Theorem~\ref{T:Minkowski} (at the left) and Lemma~\ref{L:getpoint} (at the right),
\[
	d(\bL)^2 
	\asymp_r (|\ell_1|\cdots |\ell_r|)^2  
	= \prod_{i+j = r+1} |\ell_i| |\ell_j|  
	\ll_r T^r.\qedhere
\]
\end{proof}

\begin{theorem}[Analogue of Theorem~4.1 of \cite{Eskin-Katznelson1995}]
\label{T:mainupperbound}
If $0 \le r \le n-1$ and $r$ is even,
then
\[
	N_{n,r}(T) \ll_n T^{nr/2}.
\]
\end{theorem}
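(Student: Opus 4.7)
The plan is to bound $N_{n,r}(T)$ via the decomposition \eqref{E:countbyL} into contributions $N_1(T, \calA(\bL))$ from primitive rank-$r$ sublattices $\bL \subseteq \Z^n$, and then sum over $\bL$ using Schmidt's theorem. For a fixed $\bL$ with reduced basis $\ell_1,\ldots,\ell_r$, Proposition~\ref{P: basis} furnishes the basis $\{R_{ij}\}_{1 \le i<j \le r}$ of $\calA(\bL)$, and \eqref{E:goodbasis} combined with Lemma~\ref{L: uniformity} shows this basis is within a constant factor of being reduced. Applying Lemma~\ref{L: triangle} to any $A = \sum_{i<j} b_{ij} R_{ij}$ with $|A| < T$ then forces $|b_{ij}| \ll_r T/|R_{ij}| \asymp_r T/(|\ell_i|\,|\ell_j|)$ (using Lemma~\ref{L: absbound}\eqref{|R_ij|}), so counting integer tuples $(b_{ij})$ yields
\[
	N_1(T, \calA(\bL)) \;\ll_r\; \prod_{1 \le i < j \le r} \max\!\left(1, \frac{T}{|\ell_i|\,|\ell_j|}\right).
\]

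Next, by Corollary~\ref{C: suffcondition} only $\bL$ with $d(\bL) \ll_r T^{r/2}$ contribute. For the ``full'' product, in which every factor above equals $T/(|\ell_i|\,|\ell_j|)$, Theorem~\ref{T:Minkowski} gives
\[
	\prod_{i<j} \frac{T}{|\ell_i|\,|\ell_j|} \;=\; \frac{T^{r(r-1)/2}}{(|\ell_1|\cdots|\ell_r|)^{r-1}} \;\asymp_r\; \frac{T^{r(r-1)/2}}{d(\bL)^{r-1}}.
\]
Grouping primitive $\bL$ dyadically by determinant $D < d(\bL) \le 2D$, Theorem~\ref{T:Schmidt} bounds the class by $\ll_n D^n$ lattices, so the class contributes at most
\[
	\ll_n\; D^n \cdot \frac{T^{r(r-1)/2}}{D^{r-1}} \;=\; T^{r(r-1)/2}\, D^{n-r+1}.
\]
Because $n - r + 1 \ge 2$ (as $r \le n-1$), the dyadic sum over $D \le c_r T^{r/2}$ is dominated by its largest term and equals $T^{r(r-1)/2} \cdot (T^{r/2})^{n-r+1} = T^{rn/2}$, as required.

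The main obstacle is bounding the remaining terms in the expansion of $\prod_{i<j}\max(1, T/(|\ell_i|\,|\ell_j|))$, in which a proper subset $S \subsetneq \{(i,j) : i<j\}$ records the pairs contributing $T/(|\ell_i|\,|\ell_j|)$ while the rest contribute $1$. Lemma~\ref{L:getpoint} forces every pair with $i+j \le r+1$ to lie in $S$, which pins down the ``small'' successive minima. Handling each such $S$ requires refining the dyadic argument above to track the entire profile $|\ell_1| \le \cdots \le |\ell_r|$ rather than only the product $d(\bL) \asymp_r |\ell_1|\cdots|\ell_r|$, and applying a version of Theorem~\ref{T:Schmidt} that counts primitive lattices with constraints on successive minima. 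This is the technical crux, and it follows \cite{Eskin-Katznelson1995}*{Section~4} essentially line for line, with the numerical adjustments reflecting that $\calA(\bL)$ has rank $r(r-1)/2$ rather than the symmetric case's $r(r+1)/2$, and that the relevant exponent relation becomes the identity $r(r-1)/2 + r(n-r+1)/2 = rn/2$ used above.
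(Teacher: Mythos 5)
Your proposal is correct and follows essentially the same strategy as the paper: decomposition over primitive rank-$r$ lattices, restriction via Corollary~\ref{C: suffcondition}, dyadic summation against Schmidt's theorem for the principal term, and delegation of the remaining ``mixed'' products to the Eskin--Katznelson argument (the paper invokes the proof of their Proposition~7.1, you cite their Section~4 ``line for line''). The only minor variant is that you obtain the per-lattice bound $N_1(T,\calA(\bL)) \ll_r \prod_{i<j}\max(1, T/(|\ell_i||\ell_j|))$ by direct box-counting via Lemma~\ref{L: triangle} and \eqref{E:goodbasis}, whereas the paper applies Schmidt's lattice-point formula (Lemma~\ref{L:countinlattice}) to split the count into a main term and an error product; both routes yield the same bound up to constants.
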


\begin{proof}
Since $N_{n,0}(T)=1$, we may assume that $r \ge 2$.
Let $E$ be the set of primitive rank~$r$ lattices $\bL \subset \Z^n$ 
for which $N_1(T, \calA(\bL)) > 0$.
Let $c_r$ be the implied constant in Corollary~\ref{C: suffcondition},
and let $E^*$ be the set of primitive rank~$r$ sublattices of $\Z^n$ 
for which $d(\bL) \leq c_r T^{r/2}$.
Thus $E \subseteq E^*$.
By~\eqref{E:countbyL},
\[
	N_{n,r}(T) = \sum_{\bL \in E} N_1(T,\calA(\bL)) 
	\le \sum_{\bL \in E} N(T,\calA(\bL)).
\]
By Lemmas~\ref{L:countinlattice} and \ref{L: changeofbasis},
\begin{equation}\label{E:count mat}
	N(T, \calA(\bL)) 
	= \frac{\calV_{r(r-1)/2} T^{r(r-1)/2}}{2^{r(r-1)/4}d(\bL)^{r-1}} +
	 O_r\left(\sum_{k=0}^{r(r-1)/2-1} \frac{T^k}{L_1 L_2 \cdots L_k}\right),
\end{equation}
where the $L_j$ are a reduced basis for $\calA(\bL)$.  
By~\eqref{E:goodbasis} and Lemma~\ref{L: uniformity}, 
we can order the tuples $(i,j)$ with $1\leq i<j \leq r$ 
as $(i_1,j_1),\dots$ so that
$L_k \asymp_r R_{i_k,j_k} \asymp_r |\ell_{i_k}| |\ell_{j_k}|$,
by Lemma~\ref{L: absbound}\eqref{|R_ij|},
where the $l_i$ are a reduced basis for $\bL$.  
Thus 
\begin{equation}
\label{E:error product}
	\sum_{k=0}^{r(r-1)/2-1} \frac{T^k}{L_1 L_2 \cdots L_k}
	\ll_r \prod_{\substack{1\leq i< j\leq r\\
	(i,j)\ne (r-1,r)}} \max(1,\frac{T}{|l_i||l_j|}).
\end{equation}
Let $\calE_1$ be the sum over $\bL\in E$ 
of the right side of~\eqref{E:error product}.  
If $4 \le r \le n-1$,
then the proof of \cite{Eskin-Katznelson1995}*{Proposition~7.1}
yields $\calE_1\ll_n T^{nr/2}$
(in \cite{Eskin-Katznelson1995}*{Proposition~7.1}, the analogous sum is 
instead over $\bL$ for which there is a \emph{symmetric} rank~$r$ matrix 
of size $<T$ with rows in $\bL$,
but the only property used of the $\bL$ there
is the conclusion of Lemma~\ref{L:getpoint}; 
also each summand in our $\calE_1$ is at most
the corresponding summand in \cite{Eskin-Katznelson1995}*{Proposition~7.1}).  
If $r=2$, then the right side of~\eqref{E:error product} is~$1$,
so $\calE_1 = \# E \le \# E^* \ll_n T^n$, 
by Schmidt's theorem (Theorem~\ref{T:Schmidt}).
Thus $\calE_1 \ll_n T^{nr/2}$ in all cases.

It follows that 
\begin{equation}\label{E:upperboundsecondtolast}
N_{n,r}(T) \ll_n T^{r(r-1)/2} \sum_{\bL \in E} d(\bL)^{-(r-1)} + T^{nr/2}.
\end{equation}
We have
\begin{equation}
\label{E:sum over dyadic intervals}
	\sum_{\bL \in E} d(\bL)^{-(r-1)} 
	\le \sum_{\bL \in E^*} d(\bL)^{-(r-1)}
	= \sum_{d(\bL) \le c_r T^{r/2}} d(\bL)^{-(r-1)}
	\ll_r T^{(n-r+1)r/2}
\end{equation}
by summing over dyadic intervals
since for each $t \in \R$,
\[
	\sum_{d(\bL) \in (t/2,t]} d(\bL)^{-(r-1)}
	\ll_r P_{n,r}(t) t^{-(r-1)}
	\ll_r t^{n-r+1},
\]
by Theorem~\ref{T:Schmidt}.
Substituting~\eqref{E:sum over dyadic intervals} 
into~\eqref{E:upperboundsecondtolast}
yields $N_{n,r}(T) \ll_n T^{nr/2}$.
\end{proof}

\subsection{Lower bound on $N_{n,r}(T)$}

\begin{theorem}[Analogue of Theorem~4.2 of \cite{Eskin-Katznelson1995}]
\label{T:mainlowerbound}
If $0 \le r \le n-1$ and $r$ is even, then
\[
N_{n,r}(T) \gg_n T^{nr/2}.
\]
\end{theorem}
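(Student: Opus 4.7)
The plan mirrors the upper bound argument: identify a large family $\calL$ of ``balanced'' primitive rank-$r$ sublattices of $\Z^n$, and produce at least one rank-$r$ alternating matrix in $\calA(\bL)$ of norm $<T$ for each $\bL \in \calL$. Fix constants $0 < c_1 < c_2$ (depending on $n$ and $r$, calibrated below) and let $\calL = \calL(T)$ denote the set of primitive rank-$r$ sublattices $\bL \subset \Z^n$ admitting a reduced basis $\ell_1, \ldots, \ell_r$ with $c_1 T^{1/2} \le |\ell_i| \le c_2 T^{1/2}$ for all $i$.

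The first step is to show $\#\calL \gg_n T^{nr/2}$. The number of $r$-tuples $(v_1, \ldots, v_r) \in (\Z^n)^r$ with each $|v_i| \in [c_1 T^{1/2}, c_2 T^{1/2}]$ is $\asymp_n T^{nr/2}$. A positive proportion of these satisfies simultaneously (i) the almost-orthogonality condition $d(\bL_0) \ge \tfrac{1}{2}|v_1| \cdots |v_r|$ for $\bL_0 := \sum_i \Z v_i$, and (ii) primitivity of $\bL_0$ in $\Z^n$. Condition (i) defines an open neighborhood of the orthonormal configurations in $(\R^n)^r$, which has positive measure and transfers to integer tuples by equidistribution as $T \to \infty$; condition (ii) has positive density by standard M\"obius inversion, since primitivity is equivalent to $\F_p$-linear independence of the reductions for every prime $p$. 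By Lemma~\ref{L: uniformity}, any reduced basis of such a $\bL_0$ has lengths $\asymp_r T^{1/2}$, so $\bL_0 \in \calL$ after adjusting $c_1, c_2$ by factors depending only on $r$. Since each $\bL \in \calL$ is produced by at most $O_r(1)$ such tuples (a bounded number of almost-reduced bases per lattice), one concludes $\#\calL \gg_n T^{nr/2}$.

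The second step constructs a matrix for each $\bL \in \calL$. Since $r$ is even, set
\[
A := R_{1,2} + R_{3,4} + \cdots + R_{r-1, r} \in \calA(\bL).
\]
Expanding $|A|^2$ via Lemma~\ref{L: absbound}\eqref{ijst} and estimating $|(\ell_i, \ell_j)| \le c_2^2 T$ by Cauchy--Schwarz yields $|A|^2 \le r^2 c_2^4 T^2$; choosing $c_2$ small enough (depending only on $r$) forces $|A| < T$. The matrix $A$ has rank exactly $r$: if $v \in \ker A$, then $\sum_k \bigl[\ell_{2k-1}(\ell_{2k} \cdot v) - \ell_{2k}(\ell_{2k-1} \cdot v)\bigr] = 0$, and the linear independence of $\ell_1, \ldots, \ell_r$ forces $\ell_i \cdot v = 0$ for all $i$, so $\dim \ker A = n - r$. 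Because $\bL$ is primitive and $A$ has rank $r$, we recover $\bL = (\textup{row space of } A) \cap \Z^n$, so distinct lattices in $\calL$ give distinct matrices. Therefore $N_{n,r}(T) \ge \#\calL \gg_n T^{nr/2}$.

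The main obstacle is the count $\#\calL \gg_n T^{nr/2}$: verifying that a positive proportion of integer tuples with balanced norms satisfy both almost-orthogonality and primitivity requires care in transferring a continuous volume estimate to $\Z^n$ via equidistribution, and the calibration of $c_1, c_2$ must respect both the abundance requirement for $\calL$ and the norm bound $|A| < T$ that limits how large $c_2$ can be in the second step.
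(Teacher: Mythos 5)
Your construction is essentially the same as the paper's: you use the matrix $A = R_{1,2} + R_{3,4} + \cdots + R_{r-1,r}$, restrict to ``balanced'' primitive lattices whose reduced basis vectors all have length $\asymp T^{1/2}$, and show each such lattice contributes a distinct rank-$r$ matrix of norm $<T$. The one place you diverge is the hardest step, the count $\#\calL \gg_n T^{nr/2}$. The paper does not reprove this: it observes that your family $\calL$ is exactly the set of $c$-regular primitive sublattices (those with a reduced basis satisfying $|\ell_1| \ge c\, d(\bL)^{1/r}$) of determinant $\ll T^{r/2}$, and then cites Eskin--Katznelson, Proposition~2.6, which gives $\asymp_n t^n$ of these with determinant at most $t$. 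Substituting $t = \epsilon T^{r/2}$ yields the lattice count immediately.

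You instead sketch a direct argument: count integer $r$-tuples with norms in a shell, argue a positive proportion are simultaneously almost-orthogonal and span a primitive sublattice, and divide by the $O_r(1)$ bases per lattice (via Lemma~\ref{L: uniformity}). This is the right outline, and it is in fact close to how Eskin--Katznelson prove their Proposition~2.6, but as written it has two soft spots that need hardening. First, you invoke equidistribution to transfer a positive-measure condition on configurations to integer tuples, and then separately invoke a M\"obius sieve for primitivity; you need to show the two conditions hold \emph{jointly} with positive density, which requires either running the sieve over the almost-orthogonal configurations or verifying the sieve's local conditions are independent of the archimedean constraint (both are doable, but neither is automatic). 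Second, the bounded-fibers claim (``at most $O_r(1)$ tuples per lattice'') relies on Lemma~\ref{L: uniformity} applied to the almost-orthogonal tuple, which is legitimate; but you should also rule out the tuple generating a finite-index proper sublattice of $\bL$, since you only assume $\bL_0 := \sum \Z v_i$ is primitive in $\Z^n$, which already forces $\bL = \bL_0$ -- so this is fine, just worth saying explicitly. Net effect: your route is more self-contained (it avoids the external citation) at the cost of about a page of sieve bookkeeping, whereas the paper's route is shorter by treating the lattice count as a known black box. Your remaining steps (the norm bound $|A| < T$, the rank-$r$ verification, and injectivity via row-space recovery) are all correct and match the paper.
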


\begin{proof}
Since $N_{n,0}(T)=1$, we may assume that $r \ge 2$.
Let $c=c(n,r)$ be as in \cite{Eskin-Katznelson1995}*{Proposition~2.6}.  
Suppose that $\bL$ is \defi{$c$-regular}, i.e., has a reduced basis
with $|\ell_1| \ge c d(\bL)^{1/r}$.
By Theorem~\ref{T:Minkowski}, $|\ell_i| \asymp_n d(\bL)^{1/r}$ for all $i$.
By Lemma~\ref{L: absbound}\eqref{|R_ij|}, $|R_{ij}| \asymp_n d(\bL)^{2/r}$ 
for all $i<j$.
By Lemma~\ref{L:bijection},
the matrix $A \colonequals \sum_{s=1}^{r/2} R_{2s-1,2s} \in \calA(\bL)$ 
is of rank~$r$, and $|A| \asymp_n d(\bL)^{2/r}$.
Thus we can fix $\epsilon = \epsilon_{n,r} >0$
so that $d(\bL) \le \epsilon T^{r/2}$ implies $|A| \le T$
and hence $N_1(T,\calA(\bL)) \ge 1$.
By \cite{Eskin-Katznelson1995}*{Proposition~2.6},
the number of $c$-regular $\bL$ with $d(\bL) \le \epsilon T^{r/2}$
is $\asymp_n T^{nr/2}$,
and each contributes at least $1$ to $N_{n,r}(T)$.
\end{proof}

Theorems \ref{T:mainupperbound} and~\ref{T:mainlowerbound}
imply Theorem~\ref{T:counting matrices by l^2},
and hence Theorem~\ref{thm:EskinKatznelsonAlternating}.

\section{Computational evidence}
\label{S:computational evidence}

\subsection{New evidence}

There is a long history of computational investigations
on the ranks of elliptic curves:
see \cites{Brumer-McGuinness1990,Cremona1997,Bektemirov-Mazur-Stein-Watkins2007}
and the references therein.
In this section, we provide some further experimental evidence 
for our conjecture on ranks,
using the computer algebra system \textsf{Magma} \cite{Magma};
specifically, we use its algorithms developed by 
Steve Donnelly and Mark Watkins for computing the $2$-Selmer group,
and by Tim Dokchitser and Mark Watkins for computing the analytic rank.

We sample random elliptic curves at various heights.  
For a height $H$, we compute uniformly random integers $A,B$ with
\[ 
	A \in [-\sqrt[3]{H/4}, \sqrt[3]{H/4}], 
	\quad B \in [-\sqrt{H/27}, \sqrt{H/27}],
\]
and keep the pair $(A,B)$ if 
$y^2=x^3+Ax+B$ defines an elliptic curve in $\EE_{\leq H} - \EE_{\leq H/2}$.
By this procedure, we generate a uniformly random elliptic curve 
$E \in \EE_{\leq H} - \EE_{\leq H/2}$.  

Next, we compute the $2$-Selmer group $\Sel_2 E$ 
assuming the Riemann hypothesis for Dedekind zeta functions
to speed up the computation of the relevant class groups and unit groups.  
We also compute $E(\Q)_{\textup{tors}}$.  
Let $r \colonequals \rk_2 \Sel_2 E - \rk_2 E(\Q)_{\textup{tors}}$.
If $r \leq 1$ (and $\Sha(E)$ is finite), 
then $\Sha(E)[2]=\{0\}$ and $r=\rk E(\Q)$.  
Otherwise, we attempt to compute the analytic rank 
$\ord_{s=1} L(E,s)$, which equals $\rk E(\Q)$
(assuming the Birch and Swinnerton-Dyer conjecture).
Computing the analytic rank is the most computationally intensive step.  

The table below displays our results.
For each $H$ shown, $N$ denotes the number of elliptic curves 
generated as above, with heights in $(H/2,H]$.
The entries in the next columns show what percentage of these $N$ 
have rank~$0$, have rank~$1$, etc.;
we expect that each of these entries deviates from the true
percentage (what we would see if we averaged over all 
elliptic curves with heights in $(H/2,H]$)
by at most $O(1/\sqrt{N})$, by the central limit theorem,
so we list $1/\sqrt{N}$ in the final column.

\begin{center}
\begin{tabular}{c|c|cccc|l}
$H$ & $N$ & $0$ & $1$ & $\geq 2$ & $\geq 3$ & $1/\sqrt{N}$ \\
\hline 
$10^{10} \rule{0em}{2.2ex}$ & $279090$ & $32.6\%$ & $47.3\%$ & $17.3\%$ & $2.7\%$ & $0.2\%$ \\
$10^{11}$ & $66006$ & $33.4$ & $47.2$ & $16.9$ & $2.5$ & $0.4$ \\
$10^{12}$ & $29844$ & $34.0$ & $47.2$ & $16.4$ & $2.5$ & $0.6$ \\
$10^{13}$ & $20299$ & $34.8$ & $47.0$ & $15.4$ & $2.6$ & $0.7$ \\
$10^{14}$ & $17836$ & $35.0$ & $47.4$ & $15.3$ & $2.2$ & $0.7$ \\
$10^{15}$ & $5028$ & $36.3$ & $46.4$ & $15.1$ & $2.2$ & $1.4$ \\
\end{tabular}
\end{center}
The proportion with rank $\geq 2$ is slowly but steadily decreasing; 
this seems consistent with our model's predicted proportion 
of $H^{-1/24+o(1)}$.  
The data for rank $\geq 3$ also seems consistent with our prediction.
The raw values predicted by our model using the functions in the last row below
are as follows:
\begin{center}
\begin{tabular}{c|cccc}
$H$ & $0$ & $1$ & $\geq 2$ & $\geq 3$ \\
\hline
$10^{10} \rule{0em}{2.2ex}$  & $30.8\%$ & $42.7\%$ & $19.2\%$ & $7.3\%$ \\
$10^{11}$ & $32.6$ & $43.9$ & $17.4$ & $6.0$ \\
$10^{12}$ & $34.2$ & $45.0$ & $15.8$ & $5.0$ \\
$10^{13}$ & $35.6$ & $45.9$ & $14.4$ & $4.1$ \\
$10^{14}$ & $36.9$ & $46.6$ & $13.0$ & $3.4$ \\
$10^{15}$ & $38.1$ & $47.2$ & $11.9$ & $2.8$ \\
$H$  & $\frac12(1-H^{-1/24})$ & $\frac12(1-H^{-1/12})$ &
$\frac12H^{-1/24}$ & $\frac12H^{-1/12}$ \\
\end{tabular}
\end{center}
But these values should not be read too closely, 
since we are ignoring an implicit factor $H^{o(1)}$.

Further computations on the distribution of Selmer groups and
ranks of elliptic curves grouped by order of magnitude of height 
have been carried out recently by
Balakrishnan, Ho, Kaplan, Spicer, Stein, and Weigandt
\cite{Balakrishnan-Ho-Kaplan-Spicer-Stein-Weigandt2016}.
Our more modest statistical sample is in good agreement with theirs.

\subsection{Biases in the counts of higher rank curves depending on the reduction modulo $p$}

Fix a prime $p$ and an elliptic curve $E$ over $\Q$
with good reduction at $p$.
Define $a_p \in \Z$ by $\#E(\F_p)=p+1-a_p$.
Inspired by random matrix theory,
Conrey, Keating, Rubinstein, and Snaith 
\cite{Conrey-Keating-Rubinstein-Snaith2002}*{Conjecture~2}
conjectured that among even quadratic twists $E_d$, 
the ratio of the number of (analytic) rank $\ge 2$ twists 
with $\left( \frac{d}{p} \right) = 1$
to the number of (analytic) rank $\ge 2$ twists 
with $\left( \frac{d}{p} \right) = -1$
tends to $\sqrt{\frac{p+1-a_p}{p+1+a_p}}$.

We can give a different argument for this conjecture,
based on the heuristic that the ``probability'' that $\sqrt{\Sha_0}=0$
should be inversely proportional to the width of the range in which
the integer $\sqrt{\Sha_0}$ lies.
Specifically, when we solve for $\sqrt{\Sha_0}$ in the
Birch and Swinnerton-Dyer conjecture,
the only systematic difference depending on $\left( \frac{d}{p} \right)$
we expect is in the local $L$-factor $L_p(E_d,s)$,
which, for $\left( \frac{d}{p} \right) = \pm 1$,
is $L_p^\pm(s) \colonequals (1 \mp a_p p^{-s} + p^{1-2s})^{-1}$.
Since $L_p^+(1)/L_p^-(1) = \frac{p+1+a_p}{p+1-a_p}$,
the probabilities that $\sqrt{\Sha_0}=0$
should be in the ratio $\sqrt{\frac{p+1-a_p}{p+1+a_p}}$.
The ratio of the counts of rank $\ge 2$ curves
should equal this ratio of probabilities.

The fact that such a rank count bias has been observed experimentally 
\cite{Conrey-Keating-Rubinstein-Snaith2002}
is further evidence that the methodology 
in Sections \ref{S:average Sha}--\ref{S:consequences} 
of basing conclusions on the distribution of $\#\Sha_0$ 
is reasonable.

\section{Further questions}\label{S:further}

The model in Section~\ref{S:random model}
will yield more predictions about ranks and Shafarevich--Tate groups
of elliptic curves if we can prove the corresponding statements
about the functions $(\rk'_E,\Sha'_E)_{E \in \EE}$ 
of random alternating integer matrices.

\subsection{The density of rank~0 elliptic curves whose Shafarevich--Tate group belongs to a specified class}

For any subset $\calG \subseteq \symplectic$, 
Corollary~\ref{C:random variables for many E}\eqref{I:global Sha' distribution for rank 1}(2)
determines the density
$\ProbE(\{E : \Sha'_E \in \calG \} \mid \rk'_E = 1)$ (with probability~$1$).
The analogous problem for $\rk'_E=0$ is of a different nature
since each $G \in \symplectic$ arises with density~$0$ 
(Corollary~\ref{C:random variables for many E}\eqref{I:distribution of global Sha'}),
so it may be unreasonable to expect the density to exist 
for all of the uncountably many $\calG$ in this case,
but one can still ask about specific $\calG$.

When $\calG$ is the set of squares of cyclic groups, 
Corollary~\ref{C:random variables for many E}\eqref{I:probability of global Sha'}(2)
gives the density of $E$ with $\Sha'_E \in \calG$ (with probability~$1$).
In contrast, consider
$\calG \colonequals 
\{ G \in \symplectic : \textup{$\sqrt{\#G}$ is squarefree}\}$.
The squarefree condition can again be checked one $p$ at a time,
but this time the reductions modulo $p$ lie on a codimension~$1$ subscheme
unfortunately,
and the condition instead is about \emph{squarefree values} of the Pfaffian.
This means that in order to apply the Ekedahl sieve,
we would need results on squarefree values of a multivariable polynomial.
There is a well-known heuristic,
that the density of squarefree values
is the product over $p$ of the density of values not divisible by $p^2$,
but making this rigorous in general seems to require the $abc$ conjecture
\cites{Granville1998,Poonen2003-squarefree} 
(and in the multivariable case the results use a nonstandard
way of counting, involving a nonsquare box).
Thus, although it is clear what to predict
for the density of $E$ such that $\sqrt{\#\Sha(E)}$ is squarefree
among all rank~$0$ elliptic curves,
it is not clear that we can prove the corresponding statement about matrices.

\subsection{Asymptotics for rank~0 elliptic curves with specified Shafarevich--Tate group}

It is conjectured that for each $G \in \symplectic$,
the density of rank~$0$ curves $E$ with $\Sha(E) \isom G$ is $0$:
see Remark~\ref{R:large and small Sha} 
and consequence~\eqref{I:consequence rank 0 has large Sha} 
of Section~\ref{S:theoretical evidence}.
But one can ask a more precise question:

\begin{question}
Given $G \in \symplectic$,
what is the asymptotic growth rate of 
$\#\{E \in \EE_{\le H} : \rk E(\Q)=0 \textup{ and } \Sha(E) \isom G \}$
as $H \to \infty$?
\end{question}

The literature contains contradictory conjectures 
even on whether the set of rank~$0$ curves with $\Sha(E) \isom G$ is finite:
\begin{enumerate}[\upshape (a)]
\item
Elkies~\cite{Elkies2002}*{Section~3.2}
(in a family of quadratic twists)
and Hindry~\cite{Hindry2007}*{Conjecture~5.4} 
(in general)
made conjectures implying that
$L(E,1) \gg H^{-o(1)}$ whenever $L(E,1) \ne 0$.
Combining these with Theorem~\ref{T:bound on Sha_0}\eqref{I:Sha_0 and L(E,1)}
would imply conditionally that $\#\Sha(E) \gg H^{1/12-o(1)}$ 
for all rank~$0$ curves, 
so only finitely many rank $0$ curves would have $\Sha(E) \isom G$.
(Hindry, however, no longer believes 
his conjecture: see~\cite{Hindry-Pacheco2016}*{Observations~1.15(b)}.)
\item \label{I:small Sha Watkins}
Watkins \cite{Watkins2008-ExpMath}*{Section~4.5},
on the other hand, 
considers it likely that among elliptic curves with root number $+1$,
the outcome $\Sha_0=1$ is about as common as $\Sha_0=0$ 
(that is, $\rk E(\Q) \ge 2$).
The numerical data 
in \cite{Dabrowski-Jedrzejak-Szymaszkiewicz2016}*{Section~11}
for a family of quadratic twists
supports this guess.
\end{enumerate}
We suspect that \eqref{I:small Sha Watkins} is the truth,
and more precisely that for each $G$, 
\begin{equation} \label{E:Sha = G}
	\#\{E \in \EE_{\le H} : \rk E(\Q)=0 \textup{ and } \Sha(E) \isom G \} 
	\stackrel{?}= H^{19/24+o(1)}.
\end{equation}
D\k{a}browski, J\k{e}drzejak, and Szymaszkiewicz
have formulated an analogous conjecture for their family of quadratic twists 
\cite{Dabrowski-Jedrzejak-Szymaszkiewicz2016}*{Conjecture~8}.

This raises the question of whether the analogue of~\eqref{E:Sha = G}
for $(\rk'_E,\Sha'_E)$ can be proved.
This has not been done, 
but \cite{Duke-Rudnick-Sarnak1993}*{Example~1.7} 
answer a closely related question
by counting $A \in \M_n(\Z)_{\alt,\le X}$
with $\#(\coker A)$ equal to a given integer,
instead of our finer question about $\coker A$ being a given group.

\subsection{The distribution of the normalized size of the Shafarevich--Tate group of rank~0 curves}

\begin{question}
\label{Q:normalized size}
Does the uniform probability measure on the finite set
\[
	\left\{ \# \Sha(E)/H^{1/12} : 
	E \in \EE_{\le H}, \; \rk E(\Q) = 0 \right\}
\]
converge weakly to a limiting distribution on $\R_{\ge 0}$ as $H \to \infty$?
\end{question}

Our model would predict an answer 
if $X(H)$ and $\nnn(H)$ were specified precisely
instead of requiring only $X(H)^{\nnn(H)} = H^{1/12+o(1)}$.
But the limiting distribution, even if it existed, would not be robust:
for example, it would probably change if we sampled integer matrices
from a sphere instead of a box.
We see no reason for favoring any particular shape,
so we view our model as being insufficient for answering
Question~\ref{Q:normalized size}.

\subsection{The distribution of the Shafarevich--Tate groups of curves of rank $r \ge 2$}

For $r \in \{0,1\}$,
Corollary~\ref{C:random variables for many E}\eqref{I:distribution of Sha'} 
determined that the distribution of $\Sha'_E[p^\infty]$
conditioned on $\rk'_E = r$
agrees with Delaunay's predictions for the actual $\Sha(E)[p^\infty]$.
Can we refine the calculations of Section~\ref{S:counting matrices}
to determine the distribution for $r \ge 2$?
As mentioned in Remark~\ref{R: high rank Sha},
it may very well be that the distribution differs 
from Delaunay's prediction for $r \ge 2$.

We can also ask about the whole group $\Sha'_E$
instead of only one $p$-primary part at a time.
Is it true that for each $r \ge 1$ and $G \in \symplectic$,
\[
	\ProbE \left(\{E \in \EE : \Sha'_E \isom G\} \mid \rk'_E = r \right)
	= \prod_p \ProbE \left(\{E \in \EE : 
	\Sha'_E[p^\infty] \isom G[p^\infty] \} \mid \rk'_E = r \right)
	> 0 ?
\]
We proved this for $r=1$: see Corollary~\ref{C:random variables for many E}\eqref{I:global Sha' distribution for rank 1}(1).

\subsection{Higher rank calibration}

\begin{question}
\label{Q:higher calibration}
Would using an upper bound on $\Sha(E)$
for rank~$r$ curves for some fixed $r \ge 1$ instead of $r=0$
have led to the same calibration of $X(H)^{\nnn(H)}$?
\end{question}

The answer to this question is not necessary for our model,
but a positive answer could be viewed as further support for it.
Many of the necessary ingredients are in place:
\begin{enumerate}[\upshape 1.]
\item 
The Riemann hypothesis for $L(E,s)$ implies 
$L^{(r)}(E,1) \ll_r N^{o(1)} \le H^{o(1)}$,
where $N$ is the conductor 
(use the proof of \cite{Conrey-Ghosh2006}*{Theorem~1} to bound 
$L(E,s)$ on a circle of radius $1/\log N$ containing $1$, 
and apply the Cauchy integral formula for $L^{(r)}(E,1)$).
\item 
Lang's conjectural lower bound for the canonical height of 
a non-torsion point \cite{Lang1990}*{Section~7}
implies that the regulator is $\ge H^{o(1)}$.
Hindry and Silverman \cites{Silverman1981, Hindry-Silverman1988}
have made significant progress towards this conjecture.
\item 
Bounds on the Tamagawa numbers, torsion, and real period 
are as in Section~\ref{S:average Sha}.
\item 
Substituting these into the Birch and Swinnerton-Dyer conjecture
yields a conjectural upper bound $\#\Sha(E) \stackrel{?}{\leq} H^{1/12+o(1)}$ 
for all $E$ of rank~$r$.
\item 
On the matrix side, for fixed $n$ and $r$,
all $A \in \M_n(\Z)_{\alt,\le X}$ of corank $\ge r$ satisfy
$\# (\coker A)_{\tors} \ll X^{n-r}$ as $X \to \infty$.
\item 
Matching these upper bounds with $n \to \infty$
would yield $X(H)^{\nnn(H)} = H^{1/12+o(1)}$ as before.
\end{enumerate}
But $\Sha(E)$ for $r \ge 1$ is usually small 
(at least conjecturally, 
as discussed in Remark~\ref{R:large and small Sha}),
as is $\# (\coker A)_{\tors}$ for $r \ge 1$,
so it is unclear whether the rare events of a large $\Sha(E)$ 
or a large $\# (\coker A)_{\tors}$ 
occur frequently enough to make these upper bounds sharp.
Hence the answer to Question~\ref{Q:higher calibration} is not clear.

\section{Generalizing to other global fields}
\label{S:global fields}

Fix a global field $K$.
Let $\EE_K$ be a set of elliptic curves over $K$
representing each isomorphism class once.
Let $B_K \colonequals \limsup_{E \in \EE_K} \rk E(K)$.
Thus $B_K$, if finite, is the smallest integer such that 
$\{E \in \EE_K: \rk E(K) > B_K\}$ is finite.
Section~\ref{S:predictions for ranks} suggests that $20 \le B_\Q \le 21$ 
(in rank $21$, the model could go either way 
depending on the sign of the function implicit in the $o(1)$ in the exponent).  
A naive generalization of our model
(see Section~\ref{S:heuristic for global fields})
would suggest that $20 \le B_K \le 21$ for all global fields $K$.

\subsection{Subfield issues}
\label{S:subfield issues}

But there is a problem.
Some elliptic curves $E$ over $K$ may have extra structure 
that our model did not take into account.  
For example, if $E$ is a base change of a curve
over a subfield $K_0 \subsetneq K$ such that $K/K_0$ is Galois,
then the group $G \colonequals \Gal(K/K_0)$ acts on $E(K)$ and $\Sha(E)$.
The exploitation of such curves $E$ leads to 
the theorems of Section~\ref{S:special number fields},
which show in particular that 
there exist number fields $K$ making $B_K$ arbitrarily large.

This makes it clear that separate models are needed 
to describe such curves.
Analogously, Cohen and Lenstra in \cite{Cohen-Lenstra1984}*{\S9,~III}
observed that class groups of cyclic cubic extensions 
are not just abelian groups but $\Z[\zeta_3]$-modules 
and should be modeled as such.
If $E$ descends to a subfield $K_0$ such that $K/K_0$ is not Galois,
the relevance of the extra structure is not as obvious,
but it still may be that a separate model is needed as it is in the Cohen-Lenstra-Martinet heuristics for class groups of arbitrary fields \cite{Cohen-Martinet1990}.

We will not attempt here to construct a model for every possible situation.
Instead we restrict attention to the set $\EE_K^\circ$ 
consisting of $E \in \EE_K$
such that $E$ is not a base change of a curve from a proper subfield.
Let $B_K^\circ \colonequals \limsup_{E \in \EE_K^\circ} \rk E(K)$.

\subsection{Heuristic for global fields}
\label{S:heuristic for global fields}
If $K$ is a number field, let $S$ be the set of archimedean places.
If $K$ is a global function field, let $S$ be any nonempty finite set
of places.
In asymptotic estimates below, we view $K$ and $S$ as fixed;
e.g., $X \ll 1$ would mean that $X$ is bounded by a constant
depending on $K$ and $S$.
Let $\calO_{K,S}$ be the ring of $S$-integers in $K$.
For simplicity, we assume that $\Cl \calO_{K.S}$ is trivial.
(If one does not want to enlarge $S$ to ensure this,
one can use the finiteness
of $\Cl \calO_{K,S}$ to verify that the estimates remain valid up
to bounded factors.)
If $v$ is a nonarchimedean place of $K$, let $\calO_v \subset K_v$
be the valuation ring.
For each place $v$ of $K$, we fix a Haar measure $\mu_v$ on $K_v$:
if $v$ is archimedean, let $\mu_v$ be Lebesgue measure;
if $v$ is nonarchimedean, choose $\mu_v$ so that $\mu_v(\calO_v)=1$.
For $a \in K_v$, let $|a|_v$ be the factor by which
multiplication-by-$a$ scales $\mu_v$.
For simplicity we assume $\Char K \ne 2,3$ from now on;
minor modifications would be needed to handle the general case.

Each $E \in \EE_K$ is represented by an equation
$y^2=x^3+Ax+B$ with $A,B \in K$ uniquely determined
up to replacing $(A,B)$ by $(\lambda^4 A,\lambda^6 B)$
for $\lambda \in K^\times$.
Choosing $\lambda$ judiciously let us assume that $A,B \in \calO_{K,S}^\times$.
Since $\Cl \calO_{K,S} = \{1\}$,
we may assume also that for every $v \notin S$ we have $v(A)<4$ or $v(B)<6$.
The only remaining freedom is to scale $(A,B)$ 
using a $\lambda$ in $\calO_{K,S}^\times$.
For $v \in S$, 
define $\height_v E = H_v \colonequals \max\{ |4A^3|_v,|27B^2|_v \}$.
By the product formula, 
the \defi{height} $\height E = H \colonequals \prod_{v \in S} H_v \in \R_{>0}$ 
is independent of the scaling.
Our model for $\rk E(K)$ is the same as for $\Q$, 
with $X^n$ to be related to this new $H$.

Let $\Delta \colonequals -16(4A^3+27B^2) \in \calO_{K,S}$.
Let $\omega \colonequals \frac{dx}{2y}$.
Let $\Omega_v \colonequals \int_{A(K_v)} |\omega_v| \mu_v \in \R_{>0}$.
Let $\Omega \colonequals \prod_{v \in S} \Omega_v$.
Essentially the same argument as in Section~\ref{S:real period} shows that
\begin{equation}
\label{E:Omega_v}
	H_v^{-1/12} \ll \Omega_v \ll H_v^{-1/12} \log(H_v/|\Delta|_v).
\end{equation}
By the product formula, $\prod_{v \in S} |\Delta|_v \ge 1$,
so $\sum_{v \in S} \log(H_v/|\Delta|_v) \le \log H$.
By the AM-GM inequality, 
$\prod_{v \in S} \log(H_v/|\Delta|_v) \ll  (\log H)^{\#S}$.
Therefore, taking the product of~\eqref{E:Omega_v} over $v \in S$ yields
$\Omega = H^{-1/12+o(1)}$.

For $v \notin S$, the Tamagawa factors $c_v$ may be bounded in terms
of $v(\Delta)$, so $\prod c_v \le H^{o(1)}$ as before.
By~\cite{Merel1996}, $\#E(K)_{\tors} \ll 1$.

Let $L_S(E,s)$ be the $L$-series of $E$ 
with the Euler factors at $v \in S$ omitted.
We assume that $L_S(E,s)$ admits an analytic continuation to $\C$
that $L_S(E,1) \ge 0$, and that the average of $L_S(E,1)$ 
over $E \in \EE_K$ is $\stackrel{?}\asymp 1$.
Define $\Sha_0$ as before.

The Birch and Swinnerton-Dyer conjecture~\cite{Tate1995}*{Conjecture~(B)} 
implies that 
\[
	L_S(E,1) \stackrel{?}= \delta \frac{\Sha_0 \, \Omega \prod_{v \notin S} c_v}{\#E(K)_{\tors}^2},
\]
where $\delta$ is a constant depending only on $K$
(and our choice of measures $\mu_v$).
As before, we obtain
$\Average_{E \in \EE_{K, \le H}} \Sha_0 \stackrel{?}\asymp H^{1/12+o(1)}$ 
as $H \to \infty$,
which suggests the same calibration $X^{n/2} \sim H^{1/12}$ as for $\Q$.

Next we argue that $\#\EE_{K,\le H} \asymp H^{5/6}$ as $H \to \infty$.
For $v \in S$, choose $C_v \in \R_{>0}$ such that $\prod_{v \in S} C_v = H$.
Parallelotope estimates~\cite{LangAlgebraicNumberTheory}*{V.\S2, Theorem~1} 
imply that the number of $A \in \calO_{K,S}$
satisfying $|4A^3|_v \le C_v$ for all $v \in S$ is $\asymp H^{1/3}$,
and similarly for $B$;
combining these estimates with an elementary sieve 
constructs $\asymp H^{5/6}$ curves,
but some of them may be equivalent 
under scaling by $\lambda \in \calO_{K,S}^\times$.
If we fix suitably small constants $\epsilon_v>0$
and remove the elliptic curves 
satisfying $\max\{|4A^3|_v,|27B^2|_v\} \le \epsilon_v C_v$ for all $v \in S$,
then the equivalence classes of those remaining are of bounded size;
thus $\#\EE_{K,\le H} \gg H^{5/6}$.
On the other hand, for suitably large constants $M_v>0$,
geometry of numbers shows that
every $E \in \EE_{K,\le H}$ is represented by a pair $(A,B) \in \calO_{K,S}^2$
such that $\max\{|4A^3|_v,|27B^2|_v\} \le M_v C_v$ for all $v \in S$,
so parallelotope estimates imply that
$\# \EE_{K,\le H} \ll \prod_v (M_v C_v)^{1/3} (M_v C_v)^{1/2} \ll H^{5/6}$.

\subsection{Number fields}

Let $\EE_{K,\le H}^\circ \colonequals \{E \in \EE_K^\circ : \height E \le H\}$.
If $E \in \EE$ is definable over some $K_0 \subsetneq K$, then $j(E) \in K_0$, 
and this implies nontrivial polynomial equations satisfied by 
the coefficients of $A$ and $B$ relative to a basis for $K$ over $\Q$.
The fraction of $E \in \EE_{K,\le H}$ for which these equations hold
is asymptotically~$0$,
so $\#\EE_{K,\le H}^\circ \asymp \#\EE_{K,\le H} \asymp H^{5/6}$.

Now the same arguments as for $\Q$ suggest $20 \le B_K^{\circ} \le 21$.

\begin{remark}
The upper bound $B_K^{\circ} \le 21$ must fail for many number fields $K$,
however, because of certain special families of elliptic curves,
as we now explain.
Shioda \cite{Shioda1992} proves that 
the elliptic curve $E \colon y^2=x^3+t^{360}+1$ over $\C(t)$ has rank~$68$.
The coefficients involved
in the coordinates of generators of $E(\C(t))$ are algebraic;
let $K$ be any number field containing them all.
Then the rank of $E$ over $K(t)$ is at least $68$.
Next, N\'eron's specialization result \cite{Neron1952}*{IV, Th\'eor\`eme~6}
shows that for $a$ in a density~$1$ subset of $K$,
setting $t=a$ results in an elliptic curve $E_a \in \EE_K^\circ$ 
such that $\rk E_a(K) \ge 68$.
(N\'eron's result states only that infinitely many such $a$ exist,
but its proof, based on the Hilbert irreducibility theorem,
gives a density~$1$ set of such $a$.
In fact, by a refinement of Silverman \cite{Silverman1983}*{Theorem~C}, 
the specialization map is injective with only finitely many exceptions.)
These $E_a$ fall into infinitely many isomorphism classes over $K$,
so $B_K^{\circ} \ge 68$.
\end{remark}

It still seems plausible that $B_K^{\circ}$ and $B_K$ are finite 
for each number field $K$.

\subsection{Number fields with infinitely many elliptic curves of high rank}
\label{S:special number fields}

Using the results of Cornut \cite{Cornut2002} and Vatsal \cite{Vatsal2003}
on Heegner points over 
anticyclotomic $\Z_p$-extensions of imaginary quadratic fields,
one can prove the following.

\begin{theorem}
\label{T:anticyclotomic}
There exists an effectively constructible sequence of number fields $K$
in which $[K:\Q] \to \infty$ and $B_K \ge [K:\Q]/2$.
\end{theorem}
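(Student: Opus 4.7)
The plan is to construct $K$ as a layer in the anticyclotomic $\Z_p$-extension of an imaginary quadratic field, and to establish $B_K \ge [K:\Q]/2$ by exhibiting infinitely many elliptic curves of rank at least $[K:\Q]/2$ over $K$. Concretely, I will fix an odd prime $p$ and an imaginary quadratic field $F$ satisfying the hypotheses of Cornut--Vatsal. Let $F_\infty/F$ denote the anticyclotomic $\Z_p$-extension, and for $n \ge 0$ let $K_n$ be its $n$-th layer, so that $K_n/\Q$ is Galois of degree $2p^n$ with dihedral Galois group. The sequence of number fields will be $\{K_n\}_{n\ge 0}$, for which $[K_n:\Q] = 2p^n \to \infty$.

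For each elliptic curve $E/\Q$ satisfying the Heegner hypothesis for $F$ (every prime dividing the conductor $N_E$ splits in $F$), the theorems of Cornut~\cite{Cornut2002} and Vatsal~\cite{Vatsal2003} provide a compatible system of Heegner points $y_m \in E(K_m)$ whose $\Gal(K_m/F)$-orbits become linearly independent modulo torsion in each new layer beyond a threshold $n_0(E)$. Translating this into ranks yields $\rk E(K_n) \ge p^n$ for every $n \ge n_0(E)$. In the favorable case where $E$ has analytic rank~$0$ over $\Q$ and analytic rank~$1$ over $F$ (so that the Heegner point $y_0 \in E(F)$ is of infinite order by Gross--Zagier, and $\rk E(F) = 1$ by Kolyvagin), together with mild local conditions at $p$, one can take $n_0(E) = 0$, giving $\rk E(K_n) \ge p^n = [K_n:\Q]/2$ for every $n \ge 0$.

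To produce infinitely many such $E$ for each $K_n$ simultaneously, I will specialize to a quadratic-twist family: starting from a single $E_0/\Q$ satisfying all of the above, consider twists $E_0^{(d)}$ parametrized by squarefree $d \in \Z$ whose prime divisors all split in $F$ and are coprime to $pN_{E_0}$. This family is infinite, and each $E_0^{(d)}$ inherits both the Heegner hypothesis and the local conditions at $p$. By classical non-vanishing results for central $L$-values in quadratic-twist families (Waldspurger, Bump--Friedberg--Hoffstein, Murty--Murty, and others), a positive-density subfamily of twists has the ``favorable'' analytic rank profile above. Each such twist then yields $\rk E_0^{(d)}(K_n) \ge p^n$ for every $n \ge 0$. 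Since $p$ is odd, $K_n$ contains a unique quadratic subfield (namely $F$), so distinct squarefree $d$'s modulo the two classes $\{1, -\disc(F)\}$ in $\Q^\times/\Q^{\times 2}$ yield $K_n$-non-isomorphic twists; hence infinitely many $d$'s produce distinct elements of $\EE_{K_n}$ of rank at least $p^n$, which proves $B_{K_n} \ge [K_n:\Q]/2$. Effectivity is inherited from the explicit choices of $F$, $p$, $E_0$, and the twist parameters.

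The main obstacle is establishing the uniform threshold $n_0(E_0^{(d)}) = 0$ over an infinite subfamily of twists: one must combine Cornut--Vatsal's precise control of the Heegner module with quantitative non-vanishing theorems to isolate an infinite set of ``good'' twists avoiding any local or analytic degeneration. This step is standard but technical, and involves careful management of the local behavior at $p$ and of the interaction between twist conductors and the Heegner hypothesis.
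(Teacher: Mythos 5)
Your outline matches the paper's at the top level — both run up the anticyclotomic $\Z_p$-tower over an imaginary quadratic field, invoke Cornut and Vatsal, and then twist to produce infinitely many curves over each layer $K_n$ — but the mechanism you use to control the rank is different and contains a real gap.

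The paper does not need any analytic non-vanishing input. It arranges the curve $E$ to have good reduction at $p$ with $a_p \not\in \{0,1,2\}$ (Mazur's hypothesis), then picks the imaginary quadratic field afterward to satisfy the Heegner hypothesis. Cornut's theorem, under these hypotheses, proves Mazur's conjecture that the Heegner module $\EE(K_\infty)$ is nonzero, hence a free $\Lambda$-module of rank~$1$, and freeness is what yields $\rk E(K_n) \ge p^n$ for \emph{every} $n$ with no threshold. The twists that preserve this are prime twists $E_\ell$ with $\ell$ split in $K_0$ and $\bigl(\tfrac{\ell}{p}\bigr)=+1$, so that the reduction modulo $p$ is unchanged and Mazur's hypothesis persists. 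No $L$-value control is required at any stage.

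Your route instead tries to force the threshold $n_0(E)$ to zero by demanding that $y_0 \in E(F)$ be non-torsion (via Gross--Zagier and a non-vanishing theorem for the quadratic-twist family). This is where the gap sits. Non-torsionness of $y_0$ is a statement at a single level and does not by itself give the $\Lambda$-module structure needed to force the Heegner points at level $n$ to span a rank-$p^n$ subgroup of $E(K_n)$; that linear independence up the tower is an Iwasawa-theoretic fact, and it is exactly what Cornut's proof of Mazur's conjecture supplies (under the $a_p$ condition). You acknowledge the uniform-threshold issue as "the main obstacle" and call it "standard but technical," but it is not a technicality: it is the content of the theorem you are citing. The fix is to drop the analytic non-vanishing step entirely, impose $a_p \not\equiv 0,1,2 \pmod p$ from the start, and propagate that congruence through the twist family, which is both simpler and actually closes the argument.

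One further small point: your construction fixes $F$ first and then hopes to find curves with the Heegner hypothesis; the paper fixes $E$ first (with the right $a_p$) and then constructs $K_0$ by quadratic reciprocity and CRT to split every prime of $N$. The order matters for effectivity, since the $a_p$ condition is what must be controlled, not the analytic rank.
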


\begin{proof}
Fix a prime $p \ge 5$.
Choose an elliptic curve over $\F_p$
whose trace of Frobenius $a_p$ is not $0$, $1$, or $2$
(see \cite{Mazur1984}*{p.~203} for the relevance of this condition),
and lift it to an elliptic curve $E$ over $\Q$ with good reduction at $p$.
Let $N$ be the conductor of $E$.
By quadratic reciprocity and the Chinese remainder theorem,
we can find an imaginary quadratic field $K_0$ satisfying the
\emph{Heegner hypothesis} that all prime factors of $N$ split in $K_0$.
Let $K_\infty$ be the anticyclotomic $\Z_p$-extension of $K_0$,
let $K_n$ be the degree~$p^n$ subextension,
and let $\Lambda \colonequals \Z_p[[\Gal(K_\infty/K)]]$
be the Iwasawa algebra \cite{Mazur1984}*{Section~17}.
The theorem on page~496 of~\cite{Cornut2002}
implies Mazur's conjecture \cite{Mazur1984}*{bottom of p.~203}
that the Heegner module $\EE(K_\infty)$ \cite{Mazur1984}*{p.~203}
is nonzero, and hence is a free $\Lambda$-module of rank~$1$.
This implies that $\rk E(K_n) \ge p^n$.
(The idea that Heegner points might yield unbounded rank
in anticyclotomic towers is due to Kur\v{c}anov~\cite{Kurcanov1977}.)
For any prime $\ell$ splitting in $K_0/\Q$ 
such that $\left(\frac{\ell}{p}\right)=+1$,
the twist $E_\ell$ satisfies the Heegner hypothesis
and its reduction mod $p$ has the same $a_p$,
so $\rk E_\ell(K_n) \ge p^n$ too.
The base extensions to $K_n$ of these twists
cover infinitely many $K_n$-isomorphism classes,
so $B_{K_n} \ge p^n = [K_n:\Q]/2$.
\end{proof}

Call a number field \defi{multiquadratic} 
if it is a compositum of quadratic fields.
We can obtain a faster rate of growth than in Theorem~\ref{T:anticyclotomic}
by using multiquadratic fields instead of anticyclotomic fields:

\begin{theorem}
\label{T:multiquadratic}
For every $n \ge 0$,
there exists a degree~$2^n$ multiquadratic field $K$
such that a positive proportion of $E \in \EE = \EE_\Q$
satisfy $\rk E(K) \ge 2^n$ and hence $B_K \ge 2^n = [K:\Q]$.
\end{theorem}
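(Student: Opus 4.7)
The plan is to turn $\rk E(K)$ for a multiquadratic $K$ into a sum of ranks of quadratic twists of $E$ and then use the positive-proportion theorem of Bhargava--Skinner--Zhang together with the multidimensional density Hales--Jewett theorem to produce one $K$ that works for a positive proportion of $E$.  If $K = \Q(\sqrt{d_1},\ldots,\sqrt{d_n})$ for pairwise coprime squarefree integers $d_1,\ldots,d_n > 1$, then $\Gal(K/\Q) \isom (\Z/2\Z)^n$, and decomposing $E(K) \tensor \Q$ along the characters of this group yields
\[
\rk E(K) = \sum_{S \subseteq \{1,\ldots,n\}} \rk E_{d_S}(\Q), \qquad d_S \colonequals \prod_{i \in S} d_i,
\]
where $d_\emptyset = 1$ contributes $\rk E(\Q)$.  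It therefore suffices to exhibit one such tuple $(d_1,\ldots,d_n)$ and a positive proportion of $E \in \EE$ for which each of the $2^n$ twists $E_{d_S}$ has Mordell--Weil rank at least~$1$ over $\Q$.

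The first ingredient is Bhargava--Skinner--Zhang~\cite{Bhargava-Skinner-Zhang-BSD-preprint}, which implies that a positive proportion of $E \in \EE$ (ordered by height) have $\rk E(\Q) = 1$.  Because their averaging machinery operates on a 2-parameter box of short Weierstrass coefficients, the same conclusion persists after reparametrizing by a fixed quadratic twist: for each squarefree $d$, applying BSZ to the family of curves $y^2 = x^3 + d^2 Ax + d^3 B$ with $(A,B)$ ranging over the usual height box produces a constant $\delta > 0$, independent of $d$, such that
\[
\#\{E \in \EE_{\le H} : \rk E_d(\Q) \ge 1\} \ge \delta \cdot \#\EE_{\le H}
\]
for all sufficiently large $H$.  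Now fix a large integer $N$ (to be chosen below) and distinct primes $p_1,\ldots,p_N$; for each $a \in \{0,1\}^N$ put $d_a \colonequals \prod_{i:a_i=1} p_i$.  Averaging the previous estimate over the $2^N$ choices of $a$ by linearity of expectation yields
\[
\Average_{E \in \EE_{\le H}} \#\{a \in \{0,1\}^N : \rk E_{d_a}(\Q) \ge 1\} \ge \delta \cdot 2^N,
\]
and a reverse Markov inequality then shows that a proportion at least $\delta/2$ of $E \in \EE_{\le H}$ have the set $U_E \colonequals \{a : \rk E_{d_a}(\Q) \ge 1\}$ of density at least $\delta/2$ in $\{0,1\}^N$.

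If $N$ is chosen large enough in terms of $n$ and $\delta$, the multidimensional density Hales--Jewett theorem forces every such $(\delta/2)$-dense $U_E$ to contain a combinatorial $n$-dimensional subspace of $\{0,1\}^N$: a partition $\{1,\ldots,N\} = I_0 \sqcup I_1 \sqcup \cdots \sqcup I_n$ with $|I_j| \ge 1$ for $j \ge 1$, together with a constant $c_0 \in \{0,1\}^{I_0}$, such that each of the $2^n$ vectors agreeing with $c_0$ on $I_0$ and constant on each $I_j$ lies in $U_E$.  The total number of such subspaces of $\{0,1\}^N$ is finite (and bounded independently of $E$), so pigeonhole pins down one specific subspace $V^*$ contained in $U_E$ for a positive proportion of $E$.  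Reading off $V^*$, set $d_0 \colonequals \prod_{i \in I_0,\, c_{0,i}=1} p_i$ and $d_j \colonequals \prod_{i \in I_j} p_i$ for $j = 1,\ldots,n$: the $d_j$ are pairwise coprime squarefree integers $>1$, and $K \colonequals \Q(\sqrt{d_1},\ldots,\sqrt{d_n})$ has degree $2^n$ over $\Q$.  For each such $E$, the containment $V^* \subseteq U_E$ translates to $\rk (E_{d_0})_{\prod_j d_j^{x_j}}(\Q) \ge 1$ for every $(x_1,\ldots,x_n) \in \{0,1\}^n$, so the character decomposition gives $\rk E_{d_0}(K) \ge 2^n$.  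Since $E \mapsto E_{d_0}$ is a bijection on $\EE$ whose effect on the height ordering is bounded in terms of the fixed $d_0$ alone, a positive proportion of curves in $\EE$ ordered by height satisfy $\rk E(K) \ge 2^n$.

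The main technical obstacle is the uniformity of $\delta$ in $d$ in the BSZ input: their arguments (Bhargava--Shankar $5$-Selmer averages, a $p$-converse to Gross--Zagier--Kolyvagin, etc.) are calibrated to $\EE$ ordered by height, and one has to verify that after reparametrizing by a fixed quadratic twist the same positive proportion of rank~$1$ curves emerges in each sub-family; this is the step where care with minimal models and height rescaling is essential.  A secondary point is that multidimensional density Hales--Jewett only yields extremely weak effective bounds on $N$ in terms of $\delta$ and $n$, but since we need only qualitative existence, this does not obstruct the argument.
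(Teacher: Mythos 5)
Your outline is the same as the paper's: decompose $\rk E(K)$ as the sum of the ranks of quadratic twists via the characters of $\Gal(K/\Q)$, invoke Bhargava--Skinner--Zhang to get a positive proportion of rank-one twists, then feed a density hypothesis into the multidimensional density Hales--Jewett theorem and pigeonhole to pin down a single multiquadratic field~$K$.

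The genuine gap is exactly the one you flag and then do not close: you need a single lower bound $\delta>0$ for the proportion of $E\in\EE_{\le H}$ with $\rk E_{d}(\Q)\ge 1$ that holds uniformly over all the relevant twist parameters~$d$, but the number of such $d$ you must control depends (via DHJ) on how large $N$ has to be in terms of $\delta$ and~$n$, which in turn would be allowed to influence~$\delta$. Asserting that ``applying BSZ to $y^2=x^3+d^2Ax+d^3B$'' yields a $\delta$ independent of~$d$ is precisely the claim that needs proof: the BSZ machinery (Wong's root-number equidistribution, the squarefree sieve, the $p$-converse to Gross--Zagier--Kolyvagin at $p=5$) is set up for $\EE$ ordered by height and does not transfer for free to arbitrary twist reparametrizations, because twisting both moves heights and changes local conditions such as reduction type at the twisting primes. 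The paper's resolution is to \emph{design} the situation so the estimates are visibly uniform: choose $m$ first (only from DHJ with the fixed threshold $26\%$), then fix primes $q_1,\dots,q_m$ congruent to $1\pmod 4$ and $\pm 2 \pmod 5$, restrict to the positive-density subfamily $S$ of $E$ with good ordinary reduction at $5$ and good reduction at every $q_i$, and observe that the chain of BSZ estimates ($>55.01\%$ with equidistributed root number, $99.9\%$ surviving a discriminant sieve, $19/40$ of those having rank one) produces $>26\%$ rank-one curves in $S$ \emph{and in each twisted copy} $S_d$ for every $d$ a product of the $q_i$, because the congruence conditions on the $q_i$ and the choice of $S$ keep all the local inputs (ordinarity at $5$, root-number analysis, the squarefree condition on $\Delta$) intact under those twists. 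That explicit setup is the content you would need to supply; the averaging/reverse-Markov step and the pigeonhole over the finitely many subspaces of $\F_2^m$ are fine and parallel the paper's hypercube argument.
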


\begin{proof}
For $E \in \EE_\Q$, let $\Delta(E)$ be its minimal discriminant,
and for $d \in \Q^\times$ (or $\Q^\times/\Q^{\times 2}$), 
let $E_d$ be the corresponding quadratic twist of $E$.
If $G$ is a finite subgroup of $\Q^\times/\Q^{\times 2}$,
and $K = \Q(\sqrt{G})$ is the corresponding multiquadratic field,
then $\rk E(K) = \sum_{d \in G} \rk E_d(\Q)$,
as one sees by decomposing the $\Gal(K/\Q)$-representation $E(K) \tensor \Q$.

Given $n$, the multidimensional density Hales--Jewett theorem
of Furstenberg and Katznelson \cite{Furstenberg-Katznelson1991}*{Theorem~2.5} 
(reproved by D.~H.~J.~Polymath~\cite{Polymath2012}*{Theorem~1.6})
shows that if $m$ is sufficiently large, any subset of $\F_2^m$
of density $26\%$ or more 
contains an affine $n$-plane.
(The reason for using $26\%$ will be explained later.)
Fix such an $m$.
Let $q_1,\ldots,q_m$ be primes congruent to $1$ modulo $4$
and to $\pm 2$ modulo $5$.
Let $L \colonequals \Q(\sqrt{q_1},\ldots,\sqrt{q_m})$.

Consider the following conditions on an elliptic curve $E \in \EE$:
\begin{enumerate}[\upshape \phantom{mmm}(a)]
\item \label{I:ordinary at 5} 
$E$ has good ordinary reduction at $5$;
\item \label{I:good at q_i}
$E$ has good reduction at every $q_i$; and
\item \label{I:weird condition}
For every prime $\ell \equiv \pm 1 \pmod{5}$
      we have $\ord_\ell(\Delta(E)) \notin \{5,10,15,\ldots\}$.
\end{enumerate}
Let $S$ be the set of $E \in \EE$ 
satisfying \eqref{I:ordinary at 5} and~\eqref{I:good at q_i};
this is a positive density subset of $\EE$.
Now
\begin{enumerate}[\upshape 1.]
\item 
A method of Wong~\cite{Wong2001} 
(see \cite{Bhargava-Skinner-Zhang-BSD-preprint}*{Theorem~16})
shows that $S$ contains a family $F'$ 
(a finite union of large families, in the sense of
\cite{Bhargava-Skinner-Zhang-BSD-preprint}*{Section~2.3})
of relative density $>55.01\%$ in $S$
in which the root number is equidistributed.
\item 
A squarefree sieve shows that at least $99.9\%$ of the curves
in $F'$ satisfy \eqref{I:weird condition} as well
(cf.~\cite{Bhargava-Skinner-Zhang-BSD-preprint}*{Lemma~19});
these curves are contained in the set $S_1(5)$ 
of~\cite{Bhargava-Skinner-Zhang-BSD-preprint}*{Section~3.1}.
\item 
The arguments of \cite{Bhargava-Skinner-Zhang-BSD-preprint}*{Section~3.4}
show that at least $19/40$ of the curves in this subfamily have rank~$1$.
\end{enumerate}
Thus at least $(0.5501)(0.999)(19/40) > 26\%$ of the curves in $S$
have rank~$1$.

Let $\calD$ be the set of products formed by subsets of $\{q_1,\ldots,q_m\}$.
For each $d \in \calD$, let $S_d \colonequals \{E_d : E \in S\}$.
The same arguments as above show that for each~$d$, 
the subset $S_d$ is of positive density in~$\EE$,
and more than $26\%$ of the curves in $S_d$ have rank~$1$.

Choose $c>0$ small enough that $E \in \EE_{\le cH}$ implies 
$E_d \in \EE_{\le H}$ for all $d \in \calD$.
For each $E \in S \intersect \EE_{\le cH}$,
call $\{E_d : d \in \calD \}$ a \defi{hypercube}.
For a positive fraction (independent of $H$) of hypercubes $\calH$,
at least $26\%$ of the $2^m$ curves in $\calH$ have rank~$1$.
In each such $\calH$,
the choice of $m$ guarantees an affine $n$-plane
consisting entirely of twists of rank $\ge 1$;
any one of these twists $E$
has rank at least $2^n$ over the degree~$2^n$ multiquadratic field 
$K \subseteq L$
corresponding to the orientation of the $n$-plane.
But $L$ has only finitely many such subfields,
so one such $K$ occurs for a positive fraction of hypercubes.
These give a positive fraction of $E \in \EE_{\le H}$ 
for which $\rk E(K) \ge 2^n$.
Thus $B_K \ge 2^n$.
\end{proof}

\begin{remark}
The proof of Theorem~\ref{T:multiquadratic}
produces a finite list of degree~$2^n$ multiquadratic
fields $K$ such that one of them satisfies $B_K \ge 2^n$,
but it seems that we cannot determine effectively which $K$ it is!
We can, however, effectively construct their compositum $L$,
a multiquadratic field of larger degree such that $B_L \ge 2^n$.
\end{remark}

\subsection{Function fields}
\label{S:function fields}

Let $K$ be a global function field.
Tate and Shafarevich \cite{Tate-Shafarevich1967}
and Ulmer \cite{Ulmer2002} constructed families of elliptic curves
showing that $B_K = \infty$.
But the elliptic curves of high rank constructed
are always defined over proper subfields of $K$.
(For example, in \cite{Tate-Shafarevich1967} the curves are isotrivial 
and not necessarily constant, but still they are defined over a proper
\emph{infinite} subfield of $K$.)
Thus $B_K^{\circ}$ may still be finite.


\begin{bibdiv}
\begin{biblist}


\bib{Balakrishnan-Ho-Kaplan-Spicer-Stein-Weigandt2016}{article}{
   author={Balakrishnan, Jennifer S.},
   author={Ho, Wei},
   author={Kaplan, Nathan},
   author={Spicer, Simon},
   author={Stein, William},
   author={Weigandt, James},
   title={Databases of elliptic curves ordered by height and distributions
   of Selmer groups and ranks},
   journal={LMS J. Comput. Math.},
   volume={19},
   date={2016},
   number={suppl. A},
   pages={351--370},
   issn={1461-1570},
   review={\MR{3540965}},
}

\bib{Batyrev-Manin1990}{article}{
  author={Batyrev, V. V.},
  author={Manin, Yu. I.},
  title={Sur le nombre des points rationnels de hauteur born\'e des vari\'et\'es alg\'ebriques},
  language={French},
  journal={Math. Ann.},
  volume={286},
  date={1990},
  number={1-3},
  pages={27--43},
  issn={0025-5831},
  review={\MR {1032922 (91g:11069)}},
  doi={10.1007/BF01453564},
}

\bib{Bektemirov-Mazur-Stein-Watkins2007}{article}{
  author={Bektemirov, Baur},
  author={Mazur, Barry},
  author={Stein, William},
  author={Watkins, Mark},
  title={Average ranks of elliptic curves: tension between data and conjecture},
  journal={Bull. Amer. Math. Soc. (N.S.)},
  volume={44},
  date={2007},
  number={2},
  pages={233--254},
  issn={0273-0979},
  review={\MR {2291676 (2009e:11107)}},
  doi={10.1090/S0273-0979-07-01138-X},
}

\bib{Bhargava-Kane-Lenstra-Poonen-Rains2015}{article}{
  author={Bhargava, Manjul},
  author={Kane, Daniel M.},
  author={Lenstra, Hendrik W., Jr.},
  author={Poonen, Bjorn},
  author={Rains, Eric},
  title={Modeling the distribution of ranks, Selmer groups, and Shafarevich-Tate groups of elliptic curves},
  journal={Camb. J. Math.},
  volume={3},
  date={2015},
  number={3},
  pages={275--321},
  issn={2168-0930},
  review={\MR {3393023}},
  label={BKLPR15},
}

\bib{Bhargava-Skinner-Zhang-BSD-preprint}{misc}{
  author={Bhargava, Manjul},
  author={Skinner, Christopher},
  author={Zhang, Wei},
  title={A majority of elliptic curves over $\mathbb {Q}$ satisfy the Birch and Swinnerton-Dyer conjecture},
  date={2014-07-17},
  note={Preprint, \texttt {arXiv:1407.1826v2}},
}

\bib{Brumer1992}{article}{
  author={Brumer, Armand},
  title={The average rank of elliptic curves.~I},
  journal={Invent. Math.},
  volume={109},
  date={1992},
  number={3},
  pages={445--472},
  issn={0020-9910},
  review={\MR {1176198 (93g:11057)}},
  doi={10.1007/BF01232033},
}

\bib{Brumer-McGuinness1990}{article}{
  author={Brumer, Armand},
  author={McGuinness, Ois{\'{\i }}n},
  title={The behavior of the Mordell-Weil group of elliptic curves},
  journal={Bull. Amer. Math. Soc. (N.S.)},
  volume={23},
  date={1990},
  number={2},
  pages={375--382},
  issn={0273-0979},
  review={\MR {1044170 (91b:11076)}},
  doi={10.1090/S0273-0979-1990-15937-3},
  label={BMc90},
}

\bib{Cassels1962-IV}{article}{
  author={Cassels, J. W. S.},
  title={Arithmetic on curves of genus $1$. IV. Proof of the Hauptvermutung},
  journal={J. reine angew. Math.},
  volume={211},
  date={1962},
  pages={95--112},
  issn={0075-4102},
  review={\MR {0163915 (29 \#1214)}},
}

\bib{Cassels1966-diophantine}{article}{
  author={Cassels, J. W. S.},
  title={Diophantine equations with special reference to elliptic curves},
  journal={J. London Math. Soc.},
  volume={41},
  date={1966},
  pages={193--291},
  issn={0024-6107},
  review={\MR {0199150 (33 \#7299)}},
}

\bib{Cohen-Lenstra1984}{article}{
  author={Cohen, H.},
  author={Lenstra, H. W., Jr.},
  title={Heuristics on class groups of number fields},
  conference={ title={Number theory, Noordwijkerhout 1983}, address={Noordwijkerhout}, date={1983}, },
  book={ series={Lecture Notes in Math.}, volume={1068}, publisher={Springer}, place={Berlin}, },
  date={1984},
  pages={33--62},
  review={\MR {756082 (85j:11144)}},
  doi={10.1007/BFb0099440},
}

\bib{Cohen-Martinet1990}{article}{
  author={Cohen, Henri},
  author={Martinet, Jacques},
  title={\'Etude heuristique des groupes de classes des corps de nombres},
  language={French},
  journal={J. reine angew. Math.},
  volume={404},
  date={1990},
  pages={39--76},
  issn={0075-4102},
  review={\MR {1037430 (91k:11097)}},
}

\bib{Conrey-Ghosh2006}{article}{
  author={Conrey, J. Brian},
  author={Ghosh, Amit},
  title={Remarks on the generalized Lindel\"of hypothesis},
  journal={Funct. Approx. Comment. Math.},
  volume={36},
  date={2006},
  pages={71--78},
  issn={0208-6573},
  review={\MR {2296639 (2007k:11140)}},
  doi={10.7169/facm/1229616442},
}

\bib{Conrey-Keating-Rubinstein-Snaith2002}{article}{
  author={Conrey, J. B.},
  author={Keating, J. P.},
  author={Rubinstein, M. O.},
  author={Snaith, N. C.},
  title={On the frequency of vanishing of quadratic twists of modular $L$-functions},
  conference={ title={Number theory for the millennium, I}, address={Urbana, IL}, date={2000}, },
  book={ publisher={A K Peters, Natick, MA}, },
  date={2002},
  pages={301--315},
  review={\MR {1956231 (2003m:11141)}},
}

\bib{Conrey-Rubinstein-Snaith-Watkins2007}{article}{
  author={Conrey, J. Brian},
  author={Rubinstein, Michael O.},
  author={Snaith, Nina C.},
  author={Watkins, Mark},
  title={Discretisation for odd quadratic twists},
  conference={ title={Ranks of elliptic curves and random matrix theory}, },
  book={ series={London Math. Soc. Lecture Note Ser.}, volume={341}, publisher={Cambridge Univ. Press, Cambridge}, },
  date={2007},
  pages={201--214},
  review={\MR {2322346 (2008h:11064)}},
  doi={10.1017/CBO9780511735158.012},
}

\bib{Cornut2002}{article}{
  author={Cornut, Christophe},
  title={Mazur's conjecture on higher Heegner points},
  journal={Invent. Math.},
  volume={148},
  date={2002},
  number={3},
  pages={495--523},
  issn={0020-9910},
  review={\MR {1908058 (2004e:11069a)}},
  doi={10.1007/s002220100199},
}

\bib{Cremona1997}{book}{
  author={Cremona, J. E.},
  title={Algorithms for modular elliptic curves},
  edition={2},
  publisher={Cambridge University Press},
  place={Cambridge},
  date={1997},
  pages={vi+376},
  isbn={0-521-59820-6},
  review={\MR {1628193 (99e:11068)}},
}

\bib{Dabrowski-Jedrzejak-Szymaszkiewicz2016}{article}{
   author={D\k{a}browski, Andrzej},
   author={J\k{e}drzejak, Tomasz},
   author={Szymaszkiewicz, Lucjan},
   title={Behaviour of the order of Tate-Shafarevich groups for the
   quadratic twists of $(X_0)(49)$},
   conference={
      title={Elliptic curves, modular forms and Iwasawa theory},
   },
   book={
      series={Springer Proc. Math. Stat.},
      volume={188},
      publisher={Springer, Cham},
   },
   date={2016},
   pages={125--159},
   review={\MR{3629650}},
}

\bib{Delaunay2001}{article}{
  author={Delaunay, Christophe},
  title={Heuristics on Tate-Shafarevitch groups of elliptic curves defined over $\mathbb {Q}$},
  journal={Experiment. Math.},
  volume={10},
  date={2001},
  number={2},
  pages={191--196},
  issn={1058-6458},
  review={\MR {1837670 (2003a:11065)}},
}

\bib{Delaunay2005}{article}{
  author={Delaunay, Christophe},
  title={Moments of the orders of Tate-Shafarevich groups},
  journal={Int. J. Number Theory},
  volume={1},
  date={2005},
  number={2},
  pages={243--264},
  issn={1793-0421},
  review={\MR {2173383 (2006h:11077)}},
  doi={10.1142/S1793042105000133},
}

\bib{Delaunay2007}{article}{
  author={Delaunay, Christophe},
  title={Heuristics on class groups and on Tate-Shafarevich groups: the magic of the Cohen-Lenstra heuristics},
  conference={ title={Ranks of elliptic curves and random matrix theory}, },
  book={ series={London Math. Soc. Lecture Note Ser.}, volume={341}, publisher={Cambridge Univ. Press}, place={Cambridge}, },
  date={2007},
  pages={323--340},
  review={\MR {2322355 (2008i:11089)}},
}

\bib{Delaunay-Duquesne2003}{article}{
  author={Delaunay, C.},
  author={Duquesne, S.},
  title={Numerical investigations related to the derivatives of the $L$-series of certain elliptic curves},
  journal={Experiment. Math.},
  volume={12},
  date={2003},
  number={3},
  pages={311--317},
  issn={1058-6458},
  review={\MR {2034395 (2004m:11098)}},
}

\bib{Delaunay-Jouhet2014a}{article}{
  author={Delaunay, Christophe},
  author={Jouhet, Fr{\'e}d{\'e}ric},
  title={$p^\ell $-torsion points in finite abelian groups and combinatorial identities},
  journal={Adv. Math.},
  volume={258},
  date={2014},
  pages={13--45},
  issn={0001-8708},
  review={\ \MR {3190422}},
  doi={10.1016/j.aim.2014.02.033},
}

\bib{Delaunay-Roblot2008}{article}{
  author={Delaunay, Christophe},
  author={Roblot, Xavier-Fran{\c {c}}ois},
  title={Regulators of rank one quadratic twists},
  language={English, with English and French summaries},
  journal={J. Th\'eor. Nombres Bordeaux},
  volume={20},
  date={2008},
  number={3},
  pages={601--624},
  issn={1246-7405},
  review={\MR {2523310 (2011a:11128)}},
}

\bib{Delaunay-Watkins2007}{article}{
  author={Delaunay, Christophe},
  author={Watkins, Mark},
  title={The powers of logarithm for quadratic twists},
  conference={ title={Ranks of elliptic curves and random matrix theory}, },
  book={ series={London Math. Soc. Lecture Note Ser.}, volume={341}, publisher={Cambridge Univ. Press, Cambridge}, },
  date={2007},
  pages={189--193},
  review={\MR {2322344 (2008c:11084)}},
  doi={10.1017/CBO9780511735158.010},
}

\bib{Deninger2010}{article}{
  author={Deninger, Christopher},
  title={The Hilbert-Polya strategy and height pairings},
  conference={ title={Casimir force, Casimir operators and the Riemann hypothesis}, },
  book={ publisher={Walter de Gruyter, Berlin}, },
  date={2010},
  pages={275--283},
  review={\MR {2777722}},
}

\bib{Dujellatorsgeneric}{misc}{
  author ={Dujella, Andrej},
  title = {Infinite families of elliptic curves with high rank and prescribed torsion}, 
  note = {\url{http://web.math.pmf.unizg.hr/~duje/tors/generic.html}, Accessed 2017-11-20},
}

\bib{Duke-Rudnick-Sarnak1993}{article}{
  author={Duke, W.},
  author={Rudnick, Z.},
  author={Sarnak, P.},
  title={Density of integer points on affine homogeneous varieties},
  journal={Duke Math. J.},
  volume={71},
  date={1993},
  number={1},
  pages={143--179},
  issn={0012-7094},
  review={\MR {1230289 (94k:11072)}},
  doi={10.1215/S0012-7094-93-07107-4},
}

\bib{Durrett2010}{book}{
  author={Durrett, Rick},
  title={Probability: theory and examples},
  series={Cambridge Series in Statistical and Probabilistic Mathematics},
  edition={4},
  publisher={Cambridge University Press, Cambridge},
  date={2010},
  pages={x+428},
  isbn={978-0-521-76539-8},
  review={\MR {2722836 (2011e:60001)}},
  doi={10.1017/CBO9780511779398},
}

\bib{Elkies2002}{article}{
  author={Elkies, Noam D.},
  title={Curves $Dy^2=x^3-x$ of odd analytic rank},
  conference={ title={Algorithmic number theory}, address={Sydney}, date={2002}, },
  book={ series={Lecture Notes in Comput. Sci.}, volume={2369}, publisher={Springer, Berlin}, },
  date={2002},
  pages={244--251},
  review={\MR {2041088 (2005a:11075)}},
}

\bib{Elkies2006}{misc}{
  author={Elkies, Noam D.},
  title={$\Z ^{28}$ in $E(\Q )$, etc.},
  date={2006-05-03},
  note={Email to the \texttt{NMBRTHRY@LISTSERV.NODAK.EDU} mailing list},
}

\bib{Eskin-Katznelson1995}{article}{
  author={Eskin, Alex},
  author={Katznelson, Yonatan R.},
  title={Singular symmetric matrices},
  journal={Duke Math. J.},
  volume={79},
  date={1995},
  number={2},
  pages={515--547},
  issn={0012-7094},
  review={\MR {1344769 (96h:11099)}},
  doi={10.1215/S0012-7094-95-07913-7},
}

\bib{Farmer-Gonek-Hughes2007}{article}{
  author={Farmer, David W.},
  author={Gonek, S. M.},
  author={Hughes, C. P.},
  title={The maximum size of {$L$}-functions},
  journal={J. reine angew. Math.},
  volume={609},
  year={2007},
  pages={215--236},
  issn={0075-4102},
  review={\MR {2350784 (2009b:11140)}},
  doi={10.1515/CRELLE.2007.064},
}

\bib{FouvryKlueners2007}{article}{
    AUTHOR = {Fouvry, \'Etienne},
    AUTHOR = {Kl\"uners, J\"urgen},
     TITLE = {On the 4-rank of class groups of quadratic number fields},
   JOURNAL = {Invent. Math.},
    VOLUME = {167},
      YEAR = {2007},
    NUMBER = {3},
     PAGES = {455--513},
      ISSN = {0020-9910},
    REVIEW = {\MR {2276261}},
       doi = {10.1007/s00222-006-0021-2},
}

\bib{Friedman-Washington1989}{article}{
  author={Friedman, Eduardo},
  author={Washington, Lawrence C.},
  title={On the distribution of divisor class groups of curves over a finite field},
  conference={ title={Th\'eorie des nombres}, address={Quebec, PQ}, date={1987}, },
  book={ publisher={de Gruyter}, place={Berlin}, },
  date={1989},
  pages={227--239},
  review={\MR {1024565 (91e:11138)}},
}

\bib{Furstenberg-Katznelson1991}{article}{
  author={Furstenberg, H.},
  author={Katznelson, Y.},
  title={A density version of the Hales-Jewett theorem},
  journal={J. Anal. Math.},
  volume={57},
  date={1991},
  pages={64--119},
  issn={0021-7670},
  review={\MR {1191743 (94f:28020)}},
  doi={10.1007/BF03041066},
}

\bib{Gerth1987}{article}{
  author={Gerth, Frank, III},
  title={Extension of conjectures of Cohen and Lenstra},
  journal={Exposition. Math.},
  volume={5},
  date={1987},
  number={2},
  pages={181--184},
  issn={0723-0869},
  review={\MR {887792 (88h:11081)}},
}

\bib{Goldfeld1979}{article}{
  author={Goldfeld, Dorian},
  title={Conjectures on elliptic curves over quadratic fields},
  conference={ title={Number theory, Carbondale 1979 (Proc. Southern Illinois Conf., Southern Illinois Univ., Carbondale, Ill., 1979)}, },
  book={ series={Lecture Notes in Math.}, volume={751}, publisher={Springer}, place={Berlin}, },
  date={1979},
  pages={108--118},
  review={\MR {564926 (81i:12014)}},
}

\bib{Goldfeld-Szpiro1995}{article}{
  author={Goldfeld, Dorian},
  author={Szpiro, Lucien},
  title={Bounds for the order of the Tate-Shafarevich group},
  note={Special issue in honour of Frans Oort},
  journal={Compositio Math.},
  volume={97},
  date={1995},
  number={1-2},
  pages={71--87},
  issn={0010-437X},
  review={\MR {1355118 (97a:11102)}},
}

\bib{Granville1995}{article}{
  author={Granville, Andrew},
  title={Harald Cram\'er and the distribution of prime numbers},
  note={Harald Cram\'er Symposium (Stockholm, 1993)},
  journal={Scand. Actuar. J.},
  date={1995},
  number={1},
  pages={12--28},
  issn={0346-1238},
  review={\MR {1349149 (96g:01002)}},
  doi={10.1080/03461238.1995.10413946},
}

\bib{Granville1998}{article}{
  author={Granville, Andrew},
  title={$ABC$ allows us to count squarefrees},
  journal={Internat. Math. Res. Notices},
  date={1998},
  number={19},
  pages={991\ndash 1009},
  issn={1073-7928},
  review={\MR {1654759 (99j:11104)}},
}

\bib{Gross1987}{article}{
  author={Gross, Benedict H.},
  title={Heights and the special values of $L$-series},
  conference={ title={Number theory}, address={Montreal, Que.}, date={1985}, },
  book={ series={CMS Conf. Proc.}, volume={7}, publisher={Amer. Math. Soc., Providence, RI}, },
  date={1987},
  pages={115--187},
  review={\MR {894322 (89c:11082)}},
}

\bib{Hall1938}{article}{
  author={Hall, P.},
  title={A partition formula connected with Abelian groups},
  journal={Comment. Math. Helv.},
  volume={11},
  date={1938},
  number={1},
  pages={126--129},
  issn={0010-2571},
  review={\MR {1509594}},
  doi={10.1007/BF01199694},
}

\bib{HarronSnowden2017}{article}{
    AUTHOR = {Harron, Robert},
    AUTHOR = {Snowden, Andrew},
     TITLE = {Counting elliptic curves with prescribed torsion},
   JOURNAL = {J. Reine Angew. Math.},
    VOLUME = {729},
      YEAR = {2017},
     PAGES = {151--170},
      ISSN = {0075-4102},
    review = {\MR {3680373}},
       doi = {10.1515/crelle-2014-0107},
}

\bib{Hindry2007}{article}{
  author={Hindry, Marc},
  title={Why is it difficult to compute the Mordell-Weil group?},
  conference={ title={Diophantine geometry}, },
  book={ series={CRM Series}, volume={4}, publisher={Ed. Norm., Pisa}, },
  date={2007},
  pages={197--219},
  review={\MR {2349656 (2008i:11074)}},
}

\bib{Hindry-Pacheco2016}{article}{
   author={Hindry, Marc},
   author={Pacheco, Am\'\i lcar},
   title={An analogue of the Brauer--Siegel theorem for abelian varieties in
   positive characteristic},
   journal={Mosc. Math. J.},
   volume={16},
   date={2016},
   number={1},
   pages={45--93},
   issn={1609-3321},
   review={\MR{3470576}},
}

\bib{Hindry-Silverman1988}{article}{
  author={Hindry, M.},
  author={Silverman, J. H.},
  title={The canonical height and integral points on elliptic curves},
  journal={Invent. Math.},
  volume={93},
  date={1988},
  number={2},
  pages={419--450},
  issn={0020-9910},
  review={\MR {948108 (89k:11044)}},
  doi={10.1007/BF01394340},
}

\bib{Honda1960}{article}{
  author={Honda, Taira},
  title={Isogenies, rational points and section points of group varieties},
  journal={Japan. J. Math.},
  volume={30},
  date={1960},
  pages={84--101},
  review={\MR {0155828 (27 \#5762)}},
}

\bib{Hortsch-preprint}{misc}{
  author={Hortsch, Ruthi},
  title={Counting elliptic curves of bounded Faltings height},
  date={2015-05-19},
  note={Preprint, \texttt {arXiv:1505.05112v1}},
}

\bib{Iwaniec-Sarnak2000}{article}{
  author={Iwaniec, H.},
  author={Sarnak, P.},
  title={Perspectives on the analytic theory of $L$-functions},
  note={GAFA 2000 (Tel Aviv, 1999)},
  journal={Geom. Funct. Anal.},
  date={2000},
  number={Special Volume},
  pages={705--741},
  issn={1016-443X},
  review={\MR {1826269 (2002b:11117)}},
}

\bib{Katz-Sarnak1999a}{book}{
  author={Katz, Nicholas M.},
  author={Sarnak, Peter},
  title={Random matrices, Frobenius eigenvalues, and monodromy},
  series={American Mathematical Society Colloquium Publications},
  volume={45},
  publisher={American Mathematical Society},
  place={Providence, RI},
  date={1999},
  pages={xii+419},
  isbn={0-8218-1017-0},
  review={\MR { 2000b:11070}},
}

\bib{Katz-Sarnak1999b}{article}{
  author={Katz, Nicholas M.},
  author={Sarnak, Peter},
  title={Zeroes of zeta functions and symmetry},
  journal={Bull. Amer. Math. Soc. (N.S.)},
  volume={36},
  date={1999},
  number={1},
  pages={1--26},
  issn={0273-0979},
  review={\MR {1640151 (2000f:11114)}},
  doi={10.1090/S0273-0979-99-00766-1},
}

\bib{Keating-Snaith2000}{article}{
  author={Keating, J. P.},
  author={Snaith, N. C.},
  title={Random matrix theory and $L$-functions at $s=1/2$},
  journal={Comm. Math. Phys.},
  volume={214},
  date={2000},
  number={1},
  pages={91--110},
  issn={0010-3616},
  review={\MR {1794267 (2002c:11108)}},
  doi={10.1007/s002200000262},
}

\bib{Kohnen-Zagier1981}{article}{
  author={Kohnen, W.},
  author={Zagier, D.},
  title={Values of $L$-series of modular forms at the center of the critical strip},
  journal={Invent. Math.},
  volume={64},
  date={1981},
  number={2},
  pages={175--198},
  issn={0020-9910},
  review={\MR {629468 (83b:10029)}},
  doi={10.1007/BF01389166},
}

\bib{Kurcanov1977}{article}{
  author={Kur{\v {c}}anov, P. F.},
  title={The zeta-function of elliptic curves over certain abelian extensions of imaginary quadratic fields},
  language={Russian},
  journal={Mat. Sb. (N.S.)},
  volume={102(144)},
  date={1977},
  number={1},
  pages={56--70, 151},
  review={\MR {0498391 (58 \#16519)}},
}

\bib{Lang1983-conjectured}{article}{
  author={Lang, Serge},
  title={Conjectured Diophantine estimates on elliptic curves},
  conference={ title={Arithmetic and geometry, Vol. I}, },
  book={ series={Progr. Math.}, volume={35}, publisher={Birkh\"auser Boston, Boston, MA}, },
  date={1983},
  pages={155--171},
  review={\MR {717593 (85d:11024)}},
}

\bib{Lang1990}{article}{
  author={Lang, Serge},
  title={Old and new conjectured Diophantine inequalities},
  journal={Bull. Amer. Math. Soc. (N.S.)},
  volume={23},
  date={1990},
  number={1},
  pages={37--75},
  issn={0273-0979},
  review={\MR {1005184 (90k:11032)}},
  doi={10.1090/S0273-0979-1990-15899-9},
}

\bib{LangAlgebraicNumberTheory}{book}{
  author={Lang, Serge},
  title={Algebraic number theory},
  series={Graduate Texts in Mathematics},
  volume={110},
  edition={2},
  publisher={Springer-Verlag},
  place={New York},
  date={1994},
  pages={xiv+357},
  isbn={0-387-94225-4},
  review={\MR {1282723 (95f:11085)}},
}

\bib{Magma}{article}{
  author={Bosma, Wieb},
  author={Cannon, John},
  author={Playoust, Catherine},
  title={The Magma algebra system. I. The user language},
  note={Computational algebra and number theory (London, 1993). Magma is available at \url {http://magma.maths.usyd.edu.au/magma/ }\phantom {i}},
  journal={J. Symbolic Comput.},
  volume={24},
  date={1997},
  number={3-4},
  pages={235\ndash 265},
  issn={0747-7171},
  review={\MR {1484478}},
  label={Magma},
}

\bib{Mazur1977}{article}{
  author={Mazur, B.},
  title={Modular curves and the Eisenstein ideal},
  journal={Inst. Hautes \'Etudes Sci. Publ. Math.},
  number={47},
  date={1977},
  pages={33--186 (1978)},
  issn={0073-8301},
  review={\MR {488287 (80c:14015)}},
}

\bib{Mazur1984}{article}{
  author={Mazur, B.},
  title={Modular curves and arithmetic},
  conference={ title={Proceedings of the International Congress of Mathematicians, Vol.\ 1, 2}, address={Warsaw}, date={1983}, },
  book={ publisher={PWN, Warsaw}, },
  date={1984},
  pages={185--211},
  review={\MR {804682 (87a:11054)}},
}

\bib{Merel1996}{article}{
  author={Merel, Lo{\"{\i }}c},
  title={Bornes pour la torsion des courbes elliptiques sur les corps de nombres},
  language={French},
  journal={Invent. Math.},
  volume={124},
  date={1996},
  number={1-3},
  pages={437\ndash 449},
  issn={0020-9910},
  review={\MR {1369424 (96i:11057)}},
}

\bib{Mestre1982}{article}{
  author={Mestre, Jean-Fran{\c {c}}ois},
  title={Construction d'une courbe elliptique de rang $\geq 12$},
  language={French, with English summary},
  journal={C. R. Acad. Sci. Paris S\'er. I Math.},
  volume={295},
  date={1982},
  number={12},
  pages={643--644},
  issn={0249-6321},
  review={\MR {688896 (84b:14019)}},
}

\bib{Mestre1986}{article}{
  author={Mestre, Jean-Fran{\c {c}}ois},
  title={Formules explicites et minorations de conducteurs de vari\'et\'es alg\'ebriques},
  language={French},
  journal={Compositio Math.},
  volume={58},
  date={1986},
  number={2},
  pages={209--232},
  issn={0010-437X},
  review={\MR {844410 (87j:11059)}},
}

\bib{Mordell1922}{article}{
  author={Mordell, L. J.},
  title={On the rational solutions of the indeterminate equations of the third and fourth degrees},
  journal={Proc. Cambridge Phil. Soc.},
  volume={21},
  date={1922},
  pages={179--192},
}

\bib{Neron1952}{article}{
  author={N{\'e}ron, Andr{\'e}},
  title={Probl\`emes arithm\'etiques et g\'eom\'etriques rattach\'es \`a la notion de rang d'une courbe alg\'ebrique dans un corps},
  language={French},
  journal={Bull. Soc. Math. France},
  volume={80},
  date={1952},
  pages={101--166},
  issn={0037-9484},
  review={\MR {0056951 (15,151a)}},
}

\bib{Neron1956}{article}{
  author={N{\'e}ron, A.},
  title={Propri\'et\'es arithm\'etiques de certaines familles de courbes alg\'ebriques},
  language={French},
  conference={ title={Proceedings of the International Congress of Mathematicians, 1954, Amsterdam, vol. III}, },
  book={ publisher={Erven P. Noordhoff N.V., Groningen; North-Holland Publishing Co., Amsterdam}, },
  date={1956},
  pages={481--488},
  review={\MR {0087210 (19,321b)}},
}

\bib{Ono-Skinner1998-Inventiones}{article}{
  author={Ono, Ken},
  author={Skinner, Christopher},
  title={Non-vanishing of quadratic twists of modular $L$-functions},
  journal={Invent. Math.},
  volume={134},
  date={1998},
  number={3},
  pages={651--660},
  issn={0020-9910},
  review={\MR {1660945 (2000a:11077)}},
  doi={10.1007/s002220050275},
}

\bib{Poincare1901}{article}{
  author={Poincar\'e, H.},
  title={Sur les propri\'et\'es arithm\'etiques des courbes alg\'ebriques},
  journal={J.\ Pures Appl.\ Math.\ (5)},
  volume={7},
  date={1901},
  pages={161--234},
}

\bib{Poincare1950-Oeuvres5}{book}{
  author={Poincar\'e, Henri},
  title={{\OE }uvres d'Henri Poincar\'e, Volume~5},
  editor={Ch\^atelet, Albert},
  publisher={Gauthier-Villars},
  address={Paris},
  date={1950},
}

\bib{Polymath2012}{article}{
  author={Polymath, D. H. J.},
  title={A new proof of the density Hales-Jewett theorem},
  journal={Ann. of Math. (2)},
  volume={175},
  date={2012},
  number={3},
  pages={1283--1327},
  issn={0003-486X},
  review={\MR {2912706}},
  doi={10.4007/annals.2012.175.3.6},
}

\bib{Poonen2003-squarefree}{article}{
  author={Poonen, Bjorn},
  title={Squarefree values of multivariable polynomials},
  journal={Duke Math. J.},
  volume={118},
  date={2003},
  number={2},
  pages={353\ndash 373},
  issn={0012-7094},
  review={\MR {1980998 (2004d:11094)}},
}

\bib{Poonen-Rains2012-selmer}{article}{
  author={Poonen, Bjorn},
  author={Rains, Eric},
  title={Random maximal isotropic subspaces and Selmer groups},
  journal={J. Amer. Math. Soc.},
  volume={25},
  date={2012},
  number={1},
  pages={245--269},
  issn={0894-0347},
  review={\MR {2833483}},
  doi={10.1090/S0894-0347-2011-00710-8},
}

\bib{Poonen-Stoll1999}{article}{
  author={Poonen, Bjorn},
  author={Stoll, Michael},
  title={The Cassels-Tate pairing on polarized abelian varieties},
  journal={Ann. of Math. (2)},
  volume={150},
  date={1999},
  number={3},
  pages={1109\ndash 1149},
  issn={0003-486X},
  review={\MR {1740984 (2000m:11048)}},
}

\bib{Rubin-Silverberg2000}{article}{
  author={Rubin, Karl},
  author={Silverberg, Alice},
  title={Ranks of elliptic curves in families of quadratic twists},
  journal={Experiment. Math.},
  volume={9},
  date={2000},
  number={4},
  pages={583--590},
  issn={1058-6458},
  review={\MR {1806293 (2001k:11105)}},
}

\bib{Rubin-Silverberg2001}{article}{
  author={Rubin, Karl},
  author={Silverberg, Alice},
  title={Rank frequencies for quadratic twists of elliptic curves},
  journal={Experiment. Math.},
  volume={10},
  date={2001},
  number={4},
  pages={559--569},
  issn={1058-6458},
  review={\MR {1881757 (2002k:11081)}},
}

\bib{Rubin-Silverberg2002}{article}{
  author={Rubin, Karl},
  author={Silverberg, Alice},
  title={Ranks of elliptic curves},
  journal={Bull. Amer. Math. Soc. (N.S.)},
  volume={39},
  date={2002},
  number={4},
  pages={455--474 (electronic)},
  issn={0273-0979},
  review={\MR {1920278 (2003f:11080)}},
  doi={10.1090/S0273-0979-02-00952-7},
}

\bib{Sarnak1990}{book}{
  author={Sarnak, Peter},
  title={Some applications of modular forms},
  series={Cambridge Tracts in Mathematics},
  volume={99},
  publisher={Cambridge University Press},
  place={Cambridge},
  date={1990},
  pages={x+111},
  isbn={0-521-40245-6},
  review={\MR {1102679 (92k:11045)}},
  doi={10.1017/CBO9780511895593},
}

\bib{Schmidt1968}{article}{
  author={Schmidt, Wolfgang M.},
  title={Asymptotic formulae for point lattices of bounded determinant and subspaces of bounded height},
  journal={Duke Math. J.},
  volume={35},
  date={1968},
  pages={327--339},
  issn={0012-7094},
  review={\MR {0224562 (37 \#161)}},
}

\bib{SerreMordellWeil1997}{book}{
  author={Serre, Jean-Pierre},
  title={Lectures on the Mordell-Weil theorem},
  series={Aspects of Mathematics},
  edition={3},
  note={Translated from the French and edited by Martin Brown from notes by Michel Waldschmidt; With a foreword by Brown and Serre},
  publisher={Friedr. Vieweg \& Sohn},
  place={Braunschweig},
  date={1997},
  pages={x+218},
  isbn={3-528-28968-6},
  review={\MR {1757192 (2000m:11049)}},
}

\bib{Shioda1992}{article}{
  author={Shioda, Tetsuji},
  title={Some remarks on elliptic curves over function fields},
  note={Journ\'ees Arithm\'etiques, 1991 (Geneva)},
  journal={Ast\'erisque},
  number={209},
  date={1992},
  pages={12, 99--114},
  issn={0303-1179},
  review={\MR {1211006 (94d:11046)}},
}

\bib{Silverman1981}{article}{
  author={Silverman, Joseph H.},
  title={Lower bound for the canonical height on elliptic curves},
  journal={Duke Math. J.},
  volume={48},
  date={1981},
  number={3},
  pages={633--648},
  issn={0012-7094},
  review={\MR {630588 (82k:14043)}},
}

\bib{Silverman1983}{article}{
  author={Silverman, Joseph H.},
  title={Heights and the specialization map for families of abelian varieties},
  journal={J. reine angew. Math.},
  volume={342},
  date={1983},
  pages={197--211},
  issn={0075-4102},
  review={\MR {703488 (84k:14033)}},
  doi={10.1515/crll.1983.342.197},
}

\bib{Silverman1986-Faltings-height}{article}{
  author={Silverman, Joseph H.},
  title={Heights and elliptic curves},
  conference={ title={Arithmetic geometry}, address={Storrs, Conn.}, date={1984}, },
  book={ publisher={Springer, New York}, },
  date={1986},
  pages={253--265},
  review={\MR {861979}},
}

\bib{SilvermanAEC1992}{book}{
  author={Silverman, Joseph H.},
  title={The arithmetic of elliptic curves},
  series={Graduate Texts in Mathematics},
  volume={106},
  publisher={Springer-Verlag},
  place={New York},
  date={1992},
  pages={xii+400},
  isbn={0-387-96203-4},
  review={\MR { 95m:11054}},
  note={Corrected reprint of the 1986 original},
}

\bib{SilvermanAEC2009}{book}{
  author={Silverman, Joseph H.},
  title={The arithmetic of elliptic curves},
  series={Graduate Texts in Mathematics},
  volume={106},
  edition={2},
  publisher={Springer, Dordrecht},
  date={2009},
  pages={xx+513},
  isbn={978-0-387-09493-9},
  review={\MR {2514094 (2010i:11005)}},
  doi={10.1007/978-0-387-09494-6},
}

\bib{Stein-Wuthrich2013}{article}{
  author={Stein, William},
  author={Wuthrich, Christian},
  title={Algorithms for the arithmetic of elliptic curves using Iwasawa theory},
  journal={Math. Comp.},
  volume={82},
  date={2013},
  number={283},
  pages={1757--1792},
  issn={0025-5718},
  review={\MR {3042584}},
  doi={10.1090/S0025-5718-2012-02649-4},
}

\bib{Stewart-Top1995}{article}{
  author={Stewart, C. L.},
  author={Top, J.},
  title={On ranks of twists of elliptic curves and power-free values of binary forms},
  journal={J. Amer. Math. Soc.},
  volume={8},
  date={1995},
  number={4},
  pages={943--973},
  issn={0894-0347},
  review={\MR {1290234 (95m:11055)}},
  doi={10.2307/2152834},
}

\bib{Tate1974}{article}{
  author={Tate, John T.},
  title={The arithmetic of elliptic curves},
  journal={Invent. Math.},
  volume={23},
  date={1974},
  pages={179--206},
  issn={0020-9910},
  review={\MR {0419359 (54 \#7380)}},
}

\bib{Tate1995}{article}{
  author={Tate, John},
  title={On the conjectures of Birch and Swinnerton-Dyer and a geometric analog},
  conference={ title={S\'eminaire Bourbaki, Vol.\ 9}, },
  book={ publisher={Soc. Math. France}, place={Paris}, },
  date={1995},
  pages={Exp.\ No.\ 306, 415--440},
  review={\MR {1610977}},
}

\bib{Tate-Shafarevich1967}{article}{
  author={T{\`e}{\u \i }t, D. T.},
  author={{\v {S}}afarevi{\v {c}}, I. R.},
  title={The rank of elliptic curves},
  language={Russian},
  journal={Dokl. Akad. Nauk SSSR},
  volume={175},
  date={1967},
  pages={770--773},
  issn={0002-3264},
  review={\MR {0237508 (38 \#5790)}},
}

\bib{Ulmer2002}{article}{
  author={Ulmer, Douglas},
  title={Elliptic curves with large rank over function fields},
  journal={Ann. of Math. (2)},
  volume={155},
  date={2002},
  number={1},
  pages={295--315},
  issn={0003-486X},
  review={\MR {1888802 (2003b:11059)}},
  doi={10.2307/3062158},
}

\bib{Vatsal2003}{article}{
  author={Vatsal, V.},
  title={Special values of anticyclotomic $L$-functions},
  journal={Duke Math. J.},
  volume={116},
  date={2003},
  number={2},
  pages={219--261},
  issn={0012-7094},
  review={\MR {1953292 (2004e:11069b)}},
  doi={10.1215/S0012-7094-03-11622-1},
}

\bib{Venkatesh-Ellenberg2010}{article}{
  author={Venkatesh, Akshay},
  author={Ellenberg, Jordan S.},
  title={Statistics of number fields and function fields},
  conference={ title={Proceedings of the International Congress of Mathematicians. Volume II}, },
  book={ publisher={Hindustan Book Agency}, place={New Delhi}, },
  date={2010},
  pages={383--402},
  review={\MR {2827801 (2012h:11160)}},
}

\bib{Waldspurger1981}{article}{
  author={Waldspurger, J.-L.},
  title={Sur les coefficients de Fourier des formes modulaires de poids demi-entier},
  language={French},
  journal={J. Math. Pures Appl. (9)},
  volume={60},
  date={1981},
  number={4},
  pages={375--484},
  issn={0021-7824},
  review={\MR {646366 (83h:10061)}},
}

\bib{Wang-Stanley2017}{article}{
   author={Wang, Yinghui},
   author={Stanley, Richard P.},
   title={The Smith normal form distribution of a random integer matrix},
   journal={SIAM J. Discrete Math.},
   volume={31},
   date={2017},
   number={3},
   pages={2247--2268},
   issn={0895-4801},
   review={\MR{3706911}},
}

\bib{Watkins2008-ExpMath}{article}{
  author={Watkins, Mark},
  title={Some heuristics about elliptic curves},
  journal={Experiment. Math.},
  volume={17},
  date={2008},
  number={1},
  pages={105--125},
  issn={1058-6458},
  review={\MR {2410120 (2009g:11076)}},
}

\bib{Watkins2008-JTNB}{article}{
  author={Watkins, Mark},
  title={On elliptic curves and random matrix theory},
  language={English, with English and French summaries},
  journal={J. Th\'eor. Nombres Bordeaux},
  volume={20},
  date={2008},
  number={3},
  pages={829--845},
  issn={1246-7405},
  review={\MR {2523320 (2010f:60021)}},
}

\bib{Watkins-et-al2014}{article}{
  author={Watkins, Mark},
  author={Donnelly, Stephen},
  author={Elkies, Noam D.},
  author={Fisher, Tom},
  author={Granville, Andrew},
  author={Rogers, Nicholas F.},
  title={Ranks of quadratic twists of elliptic curves},
  language={English, with English and French summaries},
  journal={Publ.\ math.\ de Besan\c {c}on},
  volume={2014/2},
  date={2014},
  pages={63--98},
  label={Wat${}^+$14},
}

\bib{Watkins-discursus}{misc}{
  author={Watkins, Mark},
  title={A discursus on $21$ as a bound for ranks of elliptic curves over $\mathbf{Q}$, and sundry related topics},
  date={2015-08-20},
  note={Available at \url{http://magma.maths.usyd.edu.au/~watkins/papers/DISCURSUS.pdf}},
}

\bib{deWeger1998}{article}{
  author={de Weger, Benjamin M.~M.},
  title={$A+B=C$ and big $\Sha$'s},
  language={English},
  journal={Quart.\ J.\ Math.\ Oxford Ser.\ (2)},
  volume={49},
  date={1998},
  number={193},
  pages={105--128},
  issn={0033-5606},
  review={\MR {1617347 (99j:11065)}},
  doi={10.1093/qjmath/49.193.105},
}

\bib{Wiles2006}{article}{
  author={Wiles, Andrew},
  title={The Birch and Swinnerton-Dyer conjecture},
  conference={ title={The millennium prize problems}, },
  book={ publisher={Clay Math. Inst., Cambridge, MA}, },
  date={2006},
  pages={31--41},
  review={\MR {2238272}},
}

\bib{Wong2001}{article}{
  author={Wong, Siman},
  title={On the density of elliptic curves},
  journal={Compositio Math.},
  volume={127},
  date={2001},
  number={1},
  pages={23--54},
  issn={0010-437X},
  review={\MR {1832985 (2002d:11066)}},
  doi={10.1023/A:1017514507447},
}

\bib{Wood2017}{article}{
   author={Wood, Melanie Matchett},
   title={The distribution of sandpile groups of random graphs},
   journal={J. Amer. Math. Soc.},
   volume={30},
   date={2017},
   number={4},
   pages={915--958},
   issn={0894-0347},
   review={\MR{3671933}},
}

\bib{Wood-preprint}{misc}{
  author={Wood, Melanie Matchett},
  title={Random integral matrices and the Cohen Lenstra Heuristics},
  date={2015-04-16},
  note={Preprint, \texttt{arXiv:1504.04391v1}, to appear in \emph{Amer. J. Math.}},
}

\bib{Young2006-PLMS}{article}{
  author={Young, Matthew P.},
  title={On the non-vanishing of elliptic curve $L$-functions at the central point},
  journal={Proc. London Math. Soc. (3)},
  volume={93},
  date={2006},
  number={1},
  pages={1--42},
  issn={0024-6115},
  review={\MR {2235480 (2007h:11075)}},
  doi={10.1017/S0024611506015760},
}

\end{biblist}
\end{bibdiv}

\end{document}